\def\input@path{{figures/}}\makeatother
\newtheorem{theorem}{Theorem}%[section]
\newtheorem{corollary}[theorem]{Corollary}
\newtheorem{proposition}[theorem]{Proposition}
\newtheorem{lemma}[theorem]{Lemma}
\newtheorem*{theorem*}{Theorem}%[section]
\theoremstyle{definition}
\newtheorem{definition}[theorem]{Definition}
\newtheorem{example}[theorem]{Example}
\newtheorem{remark}[theorem]{Remark}
\crefname{notation}{Notation}{Notations}
\crefname{problem}{Problem}{Problems}
\newcommand{\R}{\mathbb{R}} % reals
\newcommand{\HH}{\mathbb{H}} % hyperplane
\renewcommand{\b}[1]{{\boldsymbol{#1}}} % bold letters
\renewcommand{\c}[1]{\mathcal{#1}} % cal letters
\newcommand{\set}[2]{\left\{ #1 \;\middle|\; #2 \right\}} % set notation
\newcommand{\bigset}[2]{\big\{ #1 \;\big|\; #2 \big\}} % big set notation
\newcommand{\ssm}{\smallsetminus} % small set minus
\newcommand{\dotprod}[2]{\left\langle \, #1 \; \middle| \; #2 \, \right\rangle} % dot product
\newcommand{\one}{\b{1}} % the all one vector
\newcommand{\eqdef}{\mbox{\,\raisebox{0.2ex}{\scriptsize\ensuremath{\mathrm:}}\ensuremath{=}\,}} % :=
\newcommand{\simplex}{\b{\triangle}} % simplex
\DeclareMathOperator{\conv}{conv} % convex hull
\DeclareMathOperator{\rank}{rank} % rank
\newcommand{\ie}{\textit{i.e.}~} % id est
\newcommand{\eg}{\textit{e.g.}~} % exempli gratia
\newcommand{\aka}{\textit{a.k.a.}~} % also known as
\definecolor{darkblue}{rgb}{0,0,0.7} % darkblue color
\definecolor{green}{RGB}{57,181,74} % darkblue color
\definecolor{violet}{RGB}{147,39,143} % darkblue color
\newcommand{\darkblue}{\color{darkblue}} % darkblue command
\newcommand{\defn}[1]{\textsl{\darkblue #1}} % emphasis of a definition
\newcommand{\OEIS}[1]{{\rm \href{http://oeis.org/#1}{\texttt{#1}}}}
\newcommand{\meet}{\wedge} % meet
\newcommandx{\projDown}[1][1={}]{\smash{\pi_\downarrow^{#1}}} % down projection map
\newcommandx{\projUp}[1][1={}]{\smash{\pi^\uparrow_{#1}}} % up projection map
\newcommandx{\PT}[1][1=T]{\mathbb{#1}} % painted tree
\newcommandx{\LS}[1][1=S]{\mathbb{#1}} % lighted shade
\DeclareMathOperator{\shadow}{Sh} % shadow
\newcommand{\PTGF}{\mathcal{PT}} % painted tree generating function
\newcommand{\CGF}{\mathcal{C}} % Catalan generating function
\newcommand{\SGF}{\mathcal{S}} % Schroder generating function
\newcommand{\tSGF}{\mathcal{S}_\ast} % twisted Schroder generating function
\newcommand{\LSGF}{\mathcal{LS}} % lighted shade generating function
\newcommand{\surjections}[2]{\mathsf{S}(#1,#2)} % number of surjections
\newcommandx{\Fan}[1][1=n]{\mathcal{F}(#1)} % fan
\newcommand{\polytope}[1]{\mathds{#1}} % font polytopes
\newcommandx{\Perm}[1][1=d]{\polytope{P}\mathrm{erm}(#1)} % permutahedron
\newcommandx{\Asso}[1][1=d]{\polytope{A}\mathrm{sso}(#1)} % associahedron
\newcommandx{\Multiplihedron}[2][1=m, 2=n]{\polytope{M}\mathrm{ul}(#1, #2)} % multiplihedron
\newcommandx{\HP}[2][1=m, 2=n]{\polytope{H}\mathrm{och}(#1, #2)} % Hochschild polytope
\newcommandx{\multiwords}[2][1=m, 2=n]{\textsl{W}(#1, #2)} % multiwords
\DeclareMathOperator{\cube}{Cube} % cube
\title{Hochschild polytopes}
\thanks{VP was partially supported by the Spanish grant PID2022-137283NB-C21 of MCIN/AEI/10.13039/501100011033 / FEDER, UE, by Departament de Recerca i Universitats de la Generalitat de Catalunya (2021 SGR 00697), by the French grant CHARMS (ANR-19-CE40-0017), and by the French--Austrian project PAGCAP (ANR-21-CE48-0020 \& FWF I 5788). DP was partially supported by the Danish National Research Foundation grant DNRF157.}
\author{Vincent Pilaud}
\address{Universitat de Barcelona}
\email{vincent.pilaud@ub.edu}
\urladdr{\url{https://www.ub.edu/comb/vincentpilaud/}}
\author{Daria Poliakova}
\address{Universität Hamburg}
\email{polydarya@gmail.com}
\begin{document}

\begin{abstract}
The $(m,n)$-multiplihedron is a polytope whose faces correspond to $m$-painted \linebreak \mbox{$n$-trees}, and whose oriented skeleton is the Hasse diagram of the rotation lattice on binary $m$-painted $n$-trees.
Deleting certain inequalities from the facet description of the $(m,n)$-multipli\-hedron, we construct the $(m,n)$-Hochschild polytope whose faces correspond to $m$-lighted \mbox{$n$-sha}\-des, and whose oriented skeleton is the Hasse diagram of the rotation lattice on unary $m$-lighted \mbox{$n$-shades}.
Moreover, there is a natural shadow map from $m$-painted $n$-trees to \mbox{$m$-lighted} $n$-shades, which  turns out to define a meet semilattice morphism of rotation lattices.
In particular, when $m=1$, our Hochschild polytope is a deformed permutahedron whose oriented skeleton is the Hasse diagram of the Hochschild lattice.
\end{abstract}

\vspace*{-1.8cm}
\maketitle

%\vspace*{2cm}

\begin{figure}[h]
	\centerline{\includegraphics{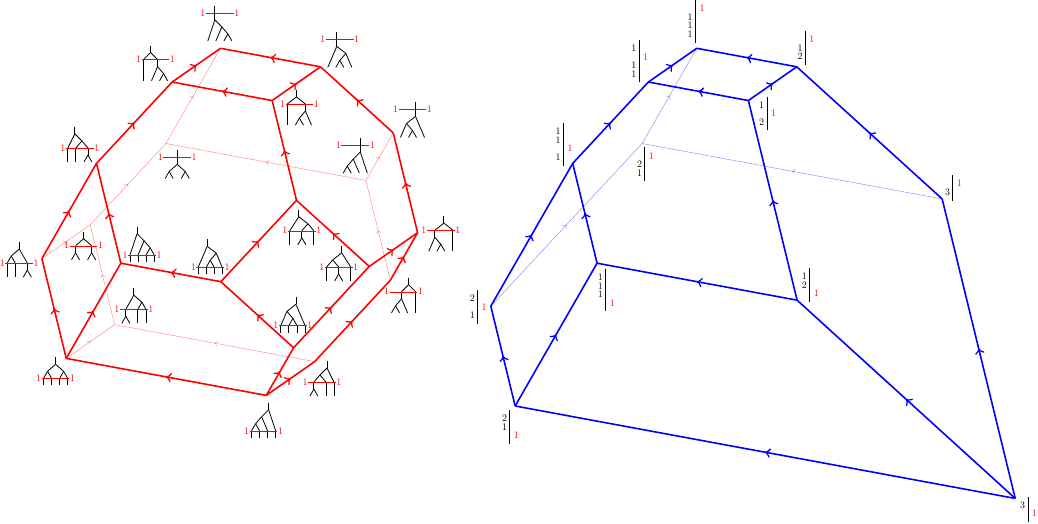}}
%	\centerline{\input{multiplihedronFreehedronLabeledOriented13}}
	\caption{The multiplihedron~$\Multiplihedron[1][3]$ (left) and the Hochschild polytope~$\HP[1][3]$ (right).}
	\label{fig:multiplihedronFreehedronLabeledOriented13}
\end{figure}

%\pagebreak
\vspace*{-.3cm}
\tableofcontents
\vspace*{-.9cm}
%\vspace*{-.3cm}

%%%%%%%%%%%%%%%%%%%%%%%%%%%%%%%%%%%%%%

\section*{Introduction}

We present a remake of the famous combinatorial, geometric, and algebraic interplay between permutations and binary trees.
In the original story, the central character is the surjective map from permutations to binary trees (given by successive binary search tree insertions~\cite{Tonks, HivertNovelliThibon-algebraBinarySearchTrees}).
This map enables us to construct the Tamari lattice~\cite{Tamari} as a lattice quotient of the weak order~\cite{Reading-CambrianLattices}, the sylvester fan as a quotient fan of the braid fan~\cite{Reading-HopfAlgebras}, Loday's associahedron~\cite{ShniderSternberg,Loday} as a removahedron of the permutahedron, and the Loday--Ronco Hopf algebra~\cite{LodayRonco} as a Hopf subalgebra of the Malvenuto--Reutenauer Hopf algebra~\cite{MalvenutoReutenauer}.
Many variations of this saga have been further investigated, notably for other lattice quotients of the weak order~\cite{Reading-HopfAlgebras, ChatelPilaud, PilaudPons-permutrees, Pilaud-brickAlgebra, PilaudSantos-quotientopes, Pilaud-arcDiagramAlgebra} and for generalized associahedra arizing from finite type cluster algebras~\cite{FominZelevinsky-ClusterAlgebrasI, FominZelevinsky-ClusterAlgebrasII, Reading-CambrianLattices, ReadingSpeyer, HohlwegLangeThomas, HohlwegPilaudStella}.
See \cite{PilaudSantosZiegler} for a recent survey on this topic.

%In the present remake, permutations are replaced by binary $m$-painted $n$-trees (binary trees on~$n$ nodes with $m$ horizontal labeled edge cuts), while binary trees are replaced by unary $m$-lighted $n$-shades (compositions of~$n$ with $m$ labels inside their gaps).
In the present remake, permutations are replaced by binary $m$-painted $n$-trees, while binary trees are replaced by unary $m$-lighted $n$-shades.
While their precise definitions are delayed to \cref{sec:combinatorics}, these combinatorial objects are already illustrated in \cref{fig:multiplihedronFreehedronLabeledOriented13}  for~$m = 1$ and~$n = 3$.
The $m$-painted $n$-trees already appeared in \cite[Sect.~3.1]{ChapotonPilaud}, inspired from the case $m = 1$ studied in~\cite{Stasheff-HSpaces, SaneblidzeUmble-diagonals, Forcey-multiplihedra, ForceyLauveSottile, MauWoodward, ArdilaDoker}.
They are mixtures between the permutations of~$[m]$ and the binary trees with~$n$ nodes (here, mixture is meant in the precise sense of shuffle~\cite{ChapotonPilaud}, which is very different from other interpolations of permutations and binary trees, notably permutrees~\cite{PilaudPons-permutrees}).
The $m$-lighted $n$-shades are introduced in this paper, inspired from the case $m = 1$ studied in~\cite{AbadCrainicDherin, Poliakova, Chapoton-Dyck, Combe, Muhle}.
Here again, the central character is a natural surjective map from the former to the latter.
Namely, the shadow map sends an $m$-painted $n$-tree to the \mbox{$m$-lighted} $n$-shade obtained by collecting the arity sequence along the right branch.
In other words, this map records the shadow projected on the right of the tree when the sun sets on the left of the tree.

We first use this map for lattice purposes.
It was proved in~\cite{ChapotonPilaud} that the right rotation digraph on binary $m$-painted $n$-trees (a mixture of the simple transposition digraph on permutations and the right rotation digraph on binary trees) defines a lattice.
We consider here also the right rotation digraph on unary $m$-lighted $n$-shades.
In contrast to the rotation graph on binary \mbox{$m$-painted} $n$-trees, the rotation graph on unary $m$-lighted $n$-shades is regular (each node has $m+n-1$ incoming plus outgoing neighbors).
We prove that it defines as well a lattice by showing that the shadow map is a meet semilattice morphism (but not a lattice morphism).
When~$m = 0$, this gives an unusual meet semilattice morphism from the Tamari lattice to the boolean lattice (distinct from the usual lattice morphism given by the canopy map).
When~$m = 1$, this gives a connection, reminiscent of~\cite{Poliakova}, between the painted tree rotation lattice and the Hochschild lattice introduced in~\cite{Chapoton-Dyck} and studied in~\cite{Combe, Muhle}.
The Hochschild lattice has nice lattice properties: it was proved to be congruence uniform and extremal in~\cite{Combe}, its Galois graph, its canonical join complex and its core label order were described in~\cite{Muhle}, and its Coxeter polynomial was conjectured to be a product of cyclotomic polynomials~\cite[Appendix]{Combe}.
For $m > 1$, computational experiments indicate that the $m$-lighted $n$-shade rotation lattice is still constructible by interval doubling (hence semi-distributive and congruence uniform), but it is not extremal and its Coxeter polynomial is not a product of cyclotomic polynomials.
However, its subposet induced by unary $m$-lighted $n$-shades where the labels of the lights are ordered seems to enjoy all these nice properties.
The lattice theory of the $m$-lighted $n$-shade right rotations certainly deserves to be investigated further.

We then use the shadow map for polytopal purposes.
It was proved in~\cite{ChapotonPilaud} that the coarsening poset on all $m$-painted $n$-trees is isomorphic to the face lattice of a polytope, called the $(m,n)$-multiplihedron~$\Multiplihedron$.
This polytope is a deformed permutahedron (\aka polymatroid~\cite{Edmonds}, or generalized permutahedron~\cite{Postnikov, PostnikovReinerWilliams}) obtained as the shuffle product~\cite{ChapotonPilaud} of an $m$-permutahedron with an $n$-associahedron of J.-L.~Loday~\cite{ShniderSternberg,Loday}.
Oriented in a suitable direction, the skeleton of the $(m,n)$-multiplihedron is isomorphic to the right rotation digraph on binary $m$-painted $n$-trees~\cite{ChapotonPilaud}.
Similarly, we show here that the coarsening poset on all $m$-lighted $n$-shades is isomorphic to the face lattice of a polytope, called the $(m,n)$-Hochschild polytope~$\HP$.
We obtain this polytope by deleting some inequalities in the facet description of the $(m,n)$-multiplihedron.
We also work out the vertex description of the $(m,n)$-Hochschild polytope and its decomposition as Minkowski sum of faces of the standard simplex.
We obtain a deformed permutahedron whose oriented skeleton is isomorphic to the right rotation digraph on unary $m$-lighted $n$-shades.
When~$m = 0$, the $(0,n)$-multiplihedron is the \mbox{$n$-associahedron} and the $(0,n)$-Hochschild polytope is a skew cube (which is not a parallelotope).
When~$m = 1$, the $(1,n)$-multiplihedron is the classical multiplihedron introduced and studied in~\cite{Stasheff-HSpaces, SaneblidzeUmble-diagonals, Forcey-multiplihedra, ForceyLauveSottile, MauWoodward, ArdilaDoker}, and the $(1,n)$-Hochschild polytope is a deformed permutahedron realizing the Hochschild lattice~\cite{Chapoton-Dyck, Combe, Muhle}.
Let us insist here on the fact that our Hochschild polytope provides a much stronger geometric realization of the Hochschild lattice than the two already existing ones.
Namely, the Hochschild lattice is known to be realized
\begin{itemize}
\item on the one hand, as the standard orientation of a graph drawn on the boundary of an hypercube (see \cref{sec:cubicRealizations}), but this graph is not the skeleton of a convex polytope,
\smallskip
\item on the other hand, as an orientation of the skeleton of a convex polytope called freehedron and obtained as a truncation of the standard simplex~\cite{Saneblidze} (or equivalently as the Minkowski sum of the faces of the standard simplex corresponding to all initial and final intervals), but this orientation cannot be obtained as a Morse orientation given by a linear functional (see \cref{exm:badFreehedron}).
\end{itemize}
Finding a deformed permutahedron whose skeleton oriented in the standard linear direction is isomorphic to the Hasse diagram of the Hochschild lattice was an open question raised by F.~Chapoton.

The aficionados of the permutahedron--associahedron saga probably wonder about properties of the singletons of the shadow map (\ie a unary $m$-lighted $n$-shade whose shadow fiber consists of a single binary $m$-painted $n$-tree).
Interestingly, these singletons are counted by binomial transforms of Fibonacci numbers.
Moreover, the facet defining inequalities of~$\Multiplihedron$ that are preserved in~$\HP$ are precisely those that contain a common vertex of~$\Multiplihedron$ and~$\HP$.
This property was essential in the original realization of the Cambrian fans of~\cite{ReadingSpeyer} as generalized associahedra~\cite{HohlwegLangeThomas}.

Somewhat independently, we also show that the right rotation digraph on unary $m$-lighted \mbox{$n$-shades} can also be realized on the boundary of an hypercube, generalizing the existing cubic coordinates for the Hochschild lattice~\cite{Combe}.
Cubic coordinates are well known for many famous lattices (they are called Lehmer codes for weak Bruhat lattices~\cite{Lehmer}, and bracket vectors for Tamari lattices~\cite{HuangTamari}).
In \cite{SaneblidzeUmble-diagonals}, a stronger notion of cubic subdivisions was used to construct combinatorial diagonals for the corresponding polytopes.
%When available, cubic coordinates also provide an elegant alternative proof of the lattice property.

We conclude this introduction by a glance at the algebraic motivation for painted trees and lighted shades, coming from homological algebra.
The family of multiplihedra controls the notion of $A_\infty$-morphisms.
If $A$ and $B$ are two $A_\infty$-algebras and $f : A \to B$ is an $A_\infty$-morphism, then each face of the multiplihedron encodes an operation $A^{ \otimes n} \to B$, with the cellular differential taking care of the relations.
Equivalently, one can view the faces of the multiplihedron as encoding the operations $A^{ \otimes n-1} \otimes M \to N$, where $A$ is an $A_\infty$-algebra and $M$ and $N$ are $A_\infty$-modules over~$A$.
Now if one assumes $A$ strictly associative (DG instead of $A_\infty$), there are fewer such operations.
A universal basis for such operations was constructed in \cite[Thm.~6.4]{AbadCrainicDherin} in the form of short forest-tree-forest triples, and it was observed in \cite[Sect.~5]{Poliakova} that these objects are nothing else but the faces of the freehedra of~\cite{Saneblidze}.
This gave the case $m=1$ of the shadow map \cite[Construction 2]{Poliakova}.

\enlargethispage{-.4cm}
The paper is organized as follows.
In \cref{sec:combinatorics}, we survey the $m$-painted $n$-trees from~\cite{ChapotonPilaud} and introduce the $m$-lighted $n$-shades, and we consider the shadow map sending the former to the latter.
In \cref{sec:polytopes}, we recall the descriptions of the $(m,n)$-multiplihedron, realizing the \mbox{$m$-painted} $n$-tree coarsening lattice, from which we derive the construction of the $(m,n)$-Hochschild polytope, realizing the $m$-lighted $n$-shade coarsening lattice.
Finally, we discuss in \cref{sec:cubicRealizations} the cubic coordinates for $m$-painted $n$-trees and $m$-lighted $n$-shades.

%%%%%%%%%%%%%%%%%%%%%%%%%%%%%%%%%%%%%%

\section{Painted trees and lighted shades}
\label{sec:combinatorics}

In this section, we first recall the combinatorics of $m$-painted $n$-trees (\cref{subsec:paintedTrees}) and introduce that of \mbox{$m$-lighted} $n$-shades (\cref{subsec:lightedShades}).
We then analyse the natural shadow map from $m$-painted $n$-trees to $m$-lighted $n$-shades (\cref{subsec:shadow}), with a particular focus on its singletons (\cref{subsec:singletons}).

%%%%%%%%%

\subsection{$m$-painted $n$-trees}
\label{subsec:paintedTrees}

We start with the combinatorics of $m$-painted $n$-trees already studied in detail in \cite[Sect.~3.1]{ChapotonPilaud}.
It was inspired from the case $m = 1$ studied in~\cite{Stasheff-HSpaces, SaneblidzeUmble-diagonals, Forcey-multiplihedra, ForceyLauveSottile, MauWoodward, ArdilaDoker}.

\begin{definition}
An \defn{$n$-tree} is a rooted plane tree with $n+1$ leaves.
\end{definition}

As usual, we orient such a tree towards its root and label its nodes in inorder.
Namely, each node with $\ell$ subtrees is labeled by an $(\ell-1)$-subset~$\{x_1, \dots, x_{\ell-1}\}$ of~$[n]$ such that all labels in  its $i$th subtree are larger than~$x_{i-1}$ and smaller than~$x_i$ (where by convention~$x_0 = 0$ and~$x_\ell = n+1$). Note in particular that unary nodes receive an empty label.
Note that we use node to refer to internal nodes, excluding the leaves.

\begin{definition}[{\cite[Def.~104]{ChapotonPilaud}}]
A \defn{cut} of an $n$-tree~$T$ is a subset~$c$ of nodes of~$T$ containing precisely one node along the path from the root to any leaf of~$T$.
A cut~$c$ is \defn{below} a cut~$c'$ if the unique node of $c$ is after the unique node of~$c'$ along any path from the root to a leaf of~$T$ (note that we draw trees growing downward).
\end{definition}

\begin{definition}[{\cite[Def.~105]{ChapotonPilaud}}]
\label{def:paintedTrees}
An \defn{$m$-painted $n$-tree}~$\PT \eqdef (T, C, \mu)$ is an $n$-tree~$T$ together with a sequence~$C \eqdef (c_1, \dots, c_k)$ of $k$ cuts of~$T$ and an ordered partition~$\mu$ of~$[m]$ into~$k$ parts for some~$k \in [m]$, such that
\begin{itemize}
\item $c_i$ is below~$c_{i+1}$ for all~$i \in [k-1]$,
\item $\bigcup C \eqdef c_1 \cup \dots \cup c_k$ contains all unary nodes of~$T$.
\end{itemize}
\end{definition}

We represent an $m$-painted $n$-tree~$\PT \eqdef (T, C, \mu)$ as a downward growing tree~$T$, where the cuts of~$C$ are red horizontal lines, labeled by the corresponding parts of~$\mu$. As there is no ambiguity, we write $12$ for the set~$\{1,2\}$. See \cref{fig:paintedTrees,fig:deletionsPaintedTrees,fig:rotationsPaintedTrees} for illustrations.

\afterpage{
\begin{figure}[t]
	\centerline{\includegraphics[scale=.9]{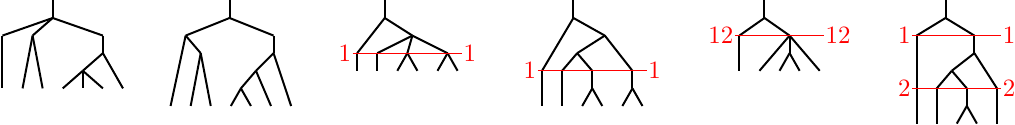}}
%	\centerline{
%		\paintedTree[1]{[[[]][[][]][[[[][][]][]]]]}{}
%		\hspace{-.5cm}
%		\paintedTree[1]{[[[][[][]]][[[[[][]][]][]]]]}{}
%		\hspace{-.5cm}
%		\paintedTree[1]{[[1 []][[1 []][1 [][]][1 [][]]]]}{1}
%		\hspace{-.5cm}
%		\paintedTree[1]{[[1 []][[[1 []][1 [[][]]]][1 [[][]]]]]}{1}
%		\hspace{-.5cm}
%		\paintedTree[1]{[ [12 []][12 [][[][]][]]]}{12}
%		\hspace{-.5cm}
%		\paintedTree[1]{[ [1 [2 []]][1 [[[2 [[]]][2 [[][]]]][2 []]]]]}{1,2}
%	}
	\caption{Some $m$-painted $n$-trees with~$m + n = 6$.}
	\label{fig:paintedTrees}
\end{figure}
}

We now associate to each $m$-painted $n$-tree a preposet (\ie a reflexive and transitive binary relation) on~$[m+n]$.
These preposets will be helpful in several places.

\begin{definition}
\label{def:preorderPaintedTree}
Consider an $m$-painted $n$-tree~$\PT \eqdef (T, C, \mu)$.
Orient~$T$ towards its root, label each node~$x$ of~$T$ by the union~$N(x)$ of the part in~$\mu$ corresponding to the cut of~$C$ passing through~$x$ (empty set if~$x$ is in no cut of~$C$) and the inorder label of~$x$ in the tree~$T$ shifted by~$m$, and finally merge all nodes contained in each cut.
We then define~$\preccurlyeq_{\PT}$ as the preposet on~$[m+n]$ where~$i \preccurlyeq_{\PT} j$ if there is a (possibly empty) oriented path from the node containing~$i$ to the node containing~$j$ in the resulting oriented graph.
See \cref{fig:preposetsPaintedTrees}.
\afterpage{
\begin{figure}[t]
	\centerline{\includegraphics[scale=.9]{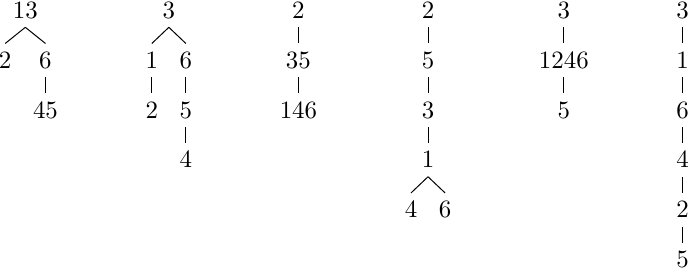}}
%	\centerline{
%		\tree{[.13 [.2 ] [.6 [.45 ] ] ]}
%		\hspace{.3cm}
%		\tree{[.3 [.1 [.2 ] ] [.6 [.5 [.4 ] ] ] ]}
%		\hspace{.3cm}
%		\tree{[.2 [.35 [.146 ] ] ]}
%		\hspace{.3cm}
%		\tree{[.2 [.5 [.3 [.1 [.4 ] [.6 ] ] ] ] ]}
%		\hspace{.3cm}
%		\tree{[.3 [.1246 [.5 ] ] ]}
%		\hspace{.3cm}
%		\tree{[.3 [.1 [.6 [.4 [.2 [.5 ] ] ] ] ] ]}
%	}
	\caption{The preposets~$\preccurlyeq_{\PT}$ associated to the $m$-painted $n$-trees~$\PT$ of \cref{fig:paintedTrees}.}
	\label{fig:preposetsPaintedTrees}
\end{figure}
}
\end{definition}

We now use these preposets to define the coarsening poset on $m$-painted $n$-trees.

\begin{definition}[{\cite[Def.~108]{ChapotonPilaud}}]
The \defn{$m$-painted $n$-tree coarsening poset} is the poset on $m$-painted $n$-trees ordered by coarsening of their corresponding preposets, that is, $\PT \le \PT'$ if~${\preccurlyeq_{\PT}} \subseteq {\preccurlyeq_{\PT'}}$.
\end{definition}

\begin{remark}
Alternatively~\cite[Prop.~111]{ChapotonPilaud}, we could describe the cover relations of the \mbox{$m$-painted} $n$-tree coarsening poset combinatorially by three types of operations, as was done in \cite[Def.~106]{ChapotonPilaud} and illustrated in \cref{fig:deletionsPaintedTrees}.
Namely, to obtain the elements covering an $m$-painted $n$-tree, one can
\begin{enumerate}[(i)]
\item contract an edge whose child is contained in no cut,
\item contract all edges from a parent in no cut to its children all in the same cut,
\item merge two consecutive cuts with no node in between them.
\end{enumerate}
\afterpage{
\begin{figure}[t]
	\centerline{\includegraphics[scale=.9]{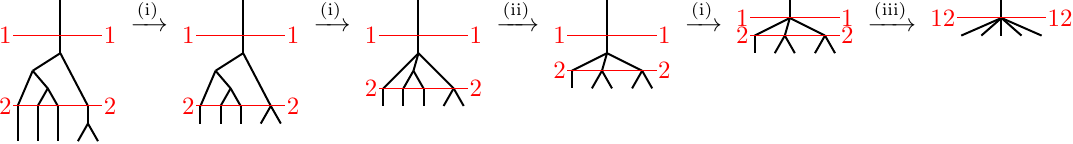}}
%	\centerline{
%		\paintedTree[1]{[1 [[[2 []][[2 []][2 []]]][2 [[][]]]]]}{1,2}
%		\hspace{-.3cm}\raisebox{-.7cm}{$\xrightarrow{\text{(i)}}$}\hspace{-.3cm}
%		\paintedTree[1]{[1 [[[2 []][[2 []][2 []]]][2 [][]]]]}{1,2}
%		\hspace{-.3cm}\raisebox{-.7cm}{$\xrightarrow{\text{(i)}}$}\hspace{-.3cm}
%		\paintedTree[1]{[1 [[2 []][[2 []][2 []]][2 [][]]]]}{1,2}
%		\hspace{-.3cm}\raisebox{-.7cm}{$\xrightarrow{\text{(ii)}}$}\hspace{-.3cm}
%		\paintedTree[1]{[1 [[2 []][2 [][]][2 [][]]]]}{1,2}
%		\hspace{-.3cm}\raisebox{-.7cm}{$\xrightarrow{\text{(i)}}$}\hspace{-.3cm}
%		\paintedTree[1]{[1 [2 []][2 [][]][2 [][]]]}{1,2}
%		\hspace{-.3cm}\raisebox{-.7cm}{$\xrightarrow{\text{(iii)}}$}\hspace{-.3cm}
%		\paintedTree[1]{[12 [][][][][]]}{12}
%	}
	\caption{Coarsenings of some $2$-painted $4$-trees. Each coarsening is labeled by its type.}
	\label{fig:deletionsPaintedTrees}
\end{figure}
}
\end{remark}

In the following statement, we denote by~$|T|$ the number of nodes of a tree~$T$ (including unary nodes), and define~$|C| \eqdef k$ and~$|\bigcup C| \eqdef |c_1 \cup \dots \cup c_k|$ for~$C = (c_1, \dots, c_k)$.

\begin{proposition}[{\cite[Props.~107 \& 116]{ChapotonPilaud}}]
The $m$-painted $n$-tree coarsening poset is a join semilattice ranked by ${m+n-|T|-|C|+|\bigcup C|}$.
We call \defn{$m$-painted $n$-tree coarsening lattice} the lattice obtained by adding a bottom element.
\end{proposition}

We now define another lattice structure, but on minimal $m$-painted $n$-trees.
See \cref{fig:multiplihedronFreehedronLabeledLattice13,fig:multiplihedronFreehedronLabeledLattice3,fig:multiplihedronFreehedronLabeledLattice4}.

\begin{definition}[{\cite[Def.~112]{ChapotonPilaud}}]
An $m$-painted $n$-tree~$\PT \eqdef (T, C, \mu)$ is \defn{binary} if it has rank~$0$, meaning that all nodes in~$\bigcup C$ are unary, while all nodes not in~$\bigcup C$ are binary.
The \defn{binary $m$-painted $n$-tree right rotation digraph} is the directed graph on binary $m$-painted $n$-trees with three types of edges, illustrated in \cref{fig:rotationsPaintedTreesGeneric,fig:rotationsPaintedTrees}:
\begin{enumerate}[(i)]
\item right rotate an edge of~$T$ joining two binary nodes,
\item move a cut of~$C$ past a binary node of~$T$ just above it,
\item exchange the labels of two consecutive cuts of~$C$ with no node of~$T$ in between them, passing the small label above the large label.
\end{enumerate}
\afterpage{
\begin{figure}[t]
	\centerline{\includegraphics[scale=.9]{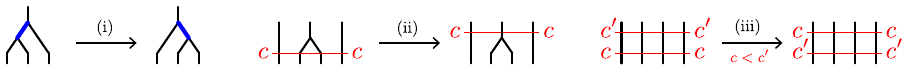}}
	\caption{Rotations of binary $m$-painted $n$-trees. Each rotation is labeled by its type. In type~(i), the rotated edge is colored in blue.}
	\label{fig:rotationsPaintedTreesGeneric}
\end{figure}
\begin{figure}[t]
	\centerline{\includegraphics[scale=.9]{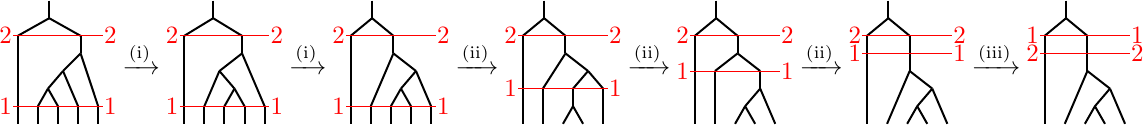}}
%	\centerline{
%		\paintedTree[1]{[ [2 [1 []]][2 [[[[1 []][1 []]][1 []]][1 []]]]]}{2,1}
%		\hspace{-.3cm}\raisebox{-1.2cm}{$\xrightarrow{\text{(i)}}$}\hspace{-.3cm}
%		\paintedTree[1]{[ [2 [1 []]][2 [[[1 []][[1 []][1 []]]][1 []]]]]}{2,1}
%		\hspace{-.3cm}\raisebox{-1.2cm}{$\xrightarrow{\text{(i)}}$}\hspace{-.3cm}
%		\paintedTree[1]{[ [2 [1 []]][2 [[1 []][[[1 []][1 []]][1 []]]]]]}{2,1}
%		\hspace{-.3cm}\raisebox{-1.2cm}{$\xrightarrow{\text{(ii)}}$}\hspace{-.3cm}
%		\paintedTree[1]{[ [2 [1 []]][2 [[1 []][[1 [[][]]][1 []]]]]]}{2,1}
%		\hspace{-.3cm}\raisebox{-1.2cm}{$\xrightarrow{\text{(ii)}}$}\hspace{-.3cm}
%		\paintedTree[1]{[ [2 [1 []]][2 [[1 []][1 [[[][]][]]]]]]}{2,1}
%		\hspace{-.3cm}\raisebox{-1.2cm}{$\xrightarrow{\text{(ii)}}$}\hspace{-.3cm}
%		\paintedTree[1]{[ [2 [1 []]][2 [1 [[][[[][]][]]]]]]}{2,1}
%		\hspace{-.3cm}\raisebox{-1.2cm}{$\xrightarrow{\text{(iii)}}$}\hspace{-.3cm}
%		\paintedTree[1]{[ [1 [2 []]][1 [2 [[][[[][]][]]]]]]}{1,2}
%	}
	\caption{Rotations of some binary $2$-painted $4$-trees. Each rotation is labeled by its type.}
	\label{fig:rotationsPaintedTrees}
\end{figure}
}
\end{definition}

\begin{remark}
\label{rem:rotationsPaintedTrees}
We could alternatively describe the right rotations on binary $m$-painted $n$-trees using their poset of \cref{def:preorderPaintedTree}.
Namely, there is an edge~$\PT \to \PT'$ if and only if there are linear extensions~$\sigma$ of~$\preccurlyeq_{\PT}$ and~$\sigma'$ of~$\preccurlyeq_{\PT'}$ such that $\sigma \lessdot \sigma'$ is a cover relation in the weak order.
The equivalence between these two perspectives follows from \cref{prop:fanPaintedTrees,prop:graphPaintedTrees} \mbox{(see \cite[Props.~118 \& 119]{ChapotonPilaud})}.
\end{remark}

\begin{proposition}[{\cite[Def.~119]{ChapotonPilaud}}]
The binary $m$-painted $n$-tree right rotation digraph is the Hasse diagram of a lattice.
\end{proposition}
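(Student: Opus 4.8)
The plan is to promote the rotation digraph to a graded poset via an explicit inversion statistic, to identify its order with inclusion of inversion sets, and then to obtain the lattice property by realizing the admissible inversion sets as the fixed points of a closure operator (equivalently, via the cubic coordinates of \cref{sec:cubicRealizations}).

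First I would attach to each binary $m$-painted $n$-tree~$\PT$ its inversion set $\mathrm{Inv}(\PT) \eqdef \set{(i,j)}{i < j \text{ and } j \preccurlyeq_{\PT} i}$ together with the statistic $h(\PT) \eqdef |\mathrm{Inv}(\PT)|$. The point here is that for a binary tree each cut is a singleton and each tree node carries a single label, so $\preccurlyeq_{\PT}$ is a genuine partial order on~$[m+n]$ and $\mathrm{Inv}(\PT)$ is well defined. Reading off the three types of rotations, each edge $(\PT, \PT')$ of the digraph flips exactly one covering relation of the preposet, turning some $i \prec j$ into $j \prec i$ with $i < j$: a type-(i) rotation flips a pair of tree labels, a type-(ii) sweep flips a tree label against a cut label, and a type-(iii) exchange flips two cut labels. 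In each case $\mathrm{Inv}(\PT') = \mathrm{Inv}(\PT) \cup \{(i,j)\}$, so $h$ increases by exactly one along every edge. This shows that the digraph is acyclic, that its transitive closure is a graded poset~$\le$, and that each edge is a cover relation, hence that the digraph is exactly the Hasse diagram of~$\le$; the same local analysis exhibits the unique source (empty inversion set) and sink (maximal admissible inversion set), providing $\hat{0}$ and $\hat{1}$.

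Next I would show that $\PT \le \PT' \iff \mathrm{Inv}(\PT) \subseteq \mathrm{Inv}(\PT')$. The forward implication is immediate from the previous step, while the converse follows from a finite local check: if $\mathrm{Inv}(\PT) \subsetneq \mathrm{Inv}(\PT')$, then a minimal missing pair can be realized by an increasing rotation out of~$\PT$ that stays inside~$\mathrm{Inv}(\PT')$. The lattice property then reduces to a statement about the family $\mathcal{I} \eqdef \set{\mathrm{Inv}(\PT)}{\PT \text{ binary}}$ ordered by inclusion: I would exhibit a transitivity-style closure operator $\overline{(\cdot)}$ on sets of pairs of~$[m+n]$ whose fixed points are exactly the admissible inversion sets. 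Because the fixed points of a closure operator form a Moore family, they are automatically closed under intersection, so the meet of~$\PT$ and~$\PT'$ is $\mathrm{Inv}(\PT) \cap \mathrm{Inv}(\PT')$; since $\le$ also has a maximum~$\hat{1}$, all joins exist as well, and $\le$ is a lattice.

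The main obstacle is the design of $\overline{(\cdot)}$ and the proof that its fixed points are precisely the inversion sets of binary $m$-painted $n$-trees. In the extreme cases this is classical: for $m = 0$ one recovers the Tamari-closed inversion sets of binary trees, and for $n = 0$ the transitively-closed inversion sets of permutations of~$\mathfrak{S}_m$, both forming lattices. The delicate part is the interaction created by the shuffle, carried by the type-(ii) relations between a tree node and the cut labels sweeping past it; the closure must simultaneously encode the Tamari condition on $\{m+1, \dots, m+n\}$, the weak-order condition on~$[m]$, and their compatibility along the cuts. Verifying that these three conditions cut out exactly $\mathcal{I}$---equivalently, that the coordinatewise minimum of the cubic coordinates of \cref{sec:cubicRealizations} always stays in the image of the coordinate map---is where the real work lies. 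Granting it, the Moore-family argument above closes the proof.
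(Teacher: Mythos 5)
Your first two steps are correct but essentially definitional: in this paper the rotation digraph is \emph{defined} by the condition ${\preccurlyeq_{\PT}} \ssm \{(i,j)\} = {\preccurlyeq_{\PT'}} \ssm \{(j,i)\}$, so each edge flips exactly one pair, and acyclicity, gradedness by $|\mathrm{Inv}(\cdot)|$, and the Hasse-diagram property follow at once. The fatal problem is the lattice step. You want the admissible inversion sets to be the fixed points of a closure operator, hence a Moore family, hence intersection-closed, and you conclude that the meet of~$\PT$ and~$\PT'$ is $\mathrm{Inv}(\PT) \cap \mathrm{Inv}(\PT')$. This is false already in the extreme case $n = 0$, which you yourself describe as classical: binary $m$-painted $0$-trees are orderings of the $m$ cuts, the rotation digraph is the Hasse diagram of the weak order on~$\mathfrak{S}_m$, and the admissible inversion sets are the usual inversion sets of permutations. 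For $m = 3$, the sets $\{(1,2),(1,3)\}$ and $\{(1,3),(2,3)\}$ are inversion sets, but their intersection $\{(1,3)\}$ is not (any ordering of the cuts placing $3$ below $1$ forces an inversion involving~$2$), and the meet of the two corresponding elements is the minimum, whose inversion set is empty. So the admissible inversion sets do not form a Moore family, no closure operator of the kind you postulate can exist, and meets are not intersections. The ``main obstacle'' you identify is thus not a hard technical step you may grant; it is an impossibility.

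The classical repair is dual: for the weak order, the \emph{join} of two elements has inversion set equal to the transitive closure of the union of the two inversion sets, and the real content is the lemma that this closure is again admissible; meets then come for free since a finite join semilattice with a minimum is a lattice. If you pursued this route for painted trees, the analogous admissibility lemma is exactly the work you deferred with ``granting it,'' so even after dualizing, your proposal contains no proof of the key step. For comparison, note that the present paper does not prove this proposition at all: it is imported from \cite[Def.~119]{ChapotonPilaud}. Where the paper does establish a lattice property of this kind, namely \cref{prop:lightedShadeRotationLattice} for unary $m$-lighted $n$-shades, it argues in a completely different way: it exhibits the shadow map as a surjective meet-semilattice morphism out of the (already known) painted-tree rotation lattice, using the order-preserving-projection criterion of \cref{prop:characterizationMeetSemilatticeCongruence}, and then upgrades the resulting bounded meet semilattice to a lattice.
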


\begin{example}
When~$m = 0$, the $0$-painted $n$-tree rotation lattice is the Tamari lattice~\cite{Tamari, HuangTamari}.
When~$m = 1$, the $1$-painted $n$-tree rotation lattice is the multiplihedron lattice introduced in~\cite{ChapotonPilaud}.
\afterpage{
\begin{figure}[t]
	\centerline{\includegraphics{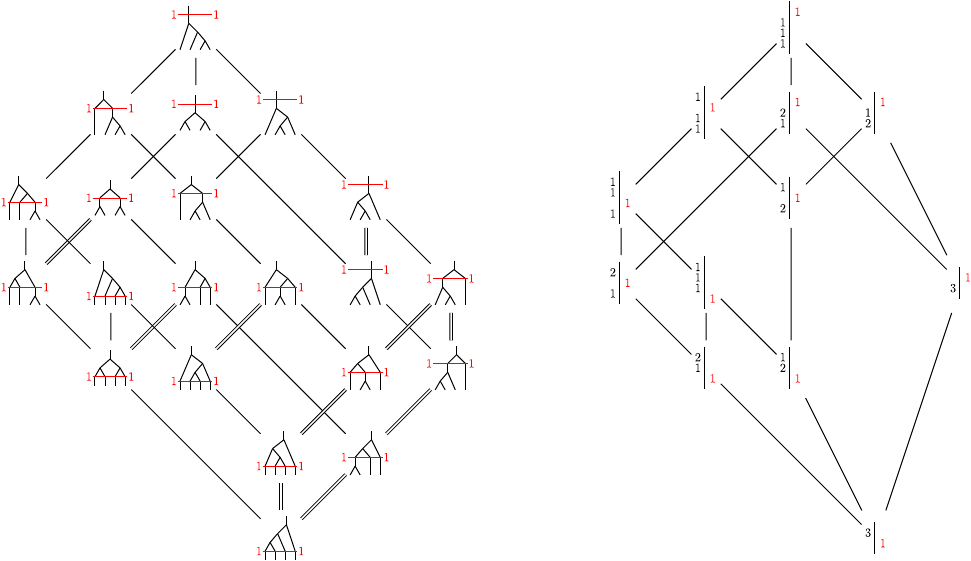}}
%	\centerline{\input{multiplihedronFreehedronLabeledLatticeDoubled13}}
	\caption{The $1$-painted $3$-tree (left) and $1$-lighted $3$-shade (right) rotation lattices.}
	\label{fig:multiplihedronFreehedronLabeledLattice13}
\end{figure}
}
\end{example}

\begin{remark}
Note that the $m$-painted $n$-tree rotation lattice is meet semidistributive, but not join semidistributive when~$m \ge 1$.
% Note that the problem does not come from the permutation part:
%sage: [[(m, n, LatticePoset(poset_skeleton(multiplihedron(m,n))).is_meet_semidistributive()) for n in range(6)] for m in range(3)]                                                                        
%[[(0, 0, True),
%  (0, 1, True),
%  (0, 2, True),
%  (0, 3, True),
%  (0, 4, True),
%  (0, 5, True)],
% [(1, 0, True),
%  (1, 1, True),
%  (1, 2, True),
%  (1, 3, True),
%  (1, 4, True),
%  (1, 5, True)],
% [(2, 0, True),
%  (2, 1, True),
%  (2, 2, True),
%  (2, 3, True),
%  (2, 4, True),
%  (2, 5, True)]]
%sage: LatticePoset(poset_skeleton(multiplihedron(1,3))).is_semidistributive()                                                                                                                             
%False
\end{remark}

\enlargethispage{.2cm}
We conclude this recollection on $m$-painted $n$-trees by some enumerative observations.
See also \cref{table:verticesMultiplihedra,,table:facetsMultiplihedra,,table:facesMultiplihedra} in \cref{subsec:tablesMultiplihedra}.

\begin{proposition}[{\cite[Prop.~126]{ChapotonPilaud}}]
\label{prop:numberBinaryPaintedTrees}
The number of binary $m$-painted $n$-trees is
\[
m! \, [y^{n+1}] \, \CGF^{(m+1)}(y),
\]
where~$[y^{n+1}]$ selects the coefficient of~$y^{n+1}$, and $\CGF^{(i)}(y)$ is defined for~$i \ge 1$ by
\[
\CGF^{(1)}(y) \eqdef \CGF(y)
\qquad\text{and}\qquad
\CGF^{(i+1)}(y) \eqdef \CGF \big( \CGF^{(i)}(y) \big),
\]
where
\[
\CGF(y) = \frac{1-\sqrt{1-4y}}{2}
\]
is the Catalan generating function.
See \cref{table:verticesMultiplihedra} in \cref{subsec:tablesMultiplihedra} for the first few numbers.
\end{proposition}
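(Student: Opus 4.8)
The plan is to strip off the labelling~$\mu$ first, reduce to an unlabelled count of trees--with--cuts, and then evaluate the latter by iterated substitution of the Catalan generating function.

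First I would determine the number of cuts of a binary painted tree. By the definition of binarity, a binary $\PT \eqdef (T, C, \mu)$ has rank~$0$, and all nodes of~$\bigcup C$ are unary while all nodes outside~$\bigcup C$ are binary. Combined with the requirement of \cref{def:paintedTrees} that~$\bigcup C$ contains all unary nodes, this makes~$\bigcup C$ exactly the set of unary nodes of~$T$, so that~$|T| - |\bigcup C|$ is the number of binary nodes of~$T$. Since~$T$ has $n+1$ leaves and each of its internal nodes is unary or binary, the number of binary nodes equals~$n$. Substituting~$|T| - |\bigcup C| = n$ and~$|C| = k$ into the rank function~$m + n - |T| - |C| + |\bigcup C|$ recalled above, the rank simplifies to~$m - k$. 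As the rank is~$0$, we obtain~$k = m$: a binary painted tree has exactly~$m$ cuts, and~$\mu$ is an ordered partition of~$[m]$ into~$m$ nonempty parts, that is, a permutation of~$[m]$. These~$m!$ choices of~$\mu$ are independent of the underlying pair~$(T, C)$, so the number of binary $m$-painted $n$-trees equals~$m!$ times the number~$N_m$ of pairs~$(T, C)$ consisting of an $n$-tree~$T$ with only unary and binary internal nodes together with a chain~$C$ of~$m$ cuts passing through all the (necessarily unary) non-binary nodes.

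It then remains to prove~$N_m = [y^{n+1}] \CGF^{(m+1)}(y)$. I would observe that the~$m$ cuts slice such a pair into~$m+1$ layers: the top layer, from the root down to~$c_m$, is a single binary tree whose leaves are the nodes of~$c_m$; each middle layer, between~$c_{i+1}$ and~$c_i$, is the ordered forest of binary trees hanging below the (unary, hence single-child) nodes of~$c_{i+1}$, whose leaves are the nodes of~$c_i$; and the bottom layer, below~$c_1$, is the ordered forest whose leaves are the~$n+1$ leaves of~$T$. Marking leaves by~$y$, a single binary tree is counted by~$\CGF(y)$, and descending one layer amounts to replacing each interface node, viewed as a variable~$y$, by the binary tree growing below it; this is precisely substitution of~$\CGF$ into itself. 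Stacking the~$m+1$ layers thus yields the $(m+1)$-fold composition~$\CGF^{(m+1)}(y)$, and reading off~$[y^{n+1}]$ records the configurations with~$n+1$ bottom leaves. A single-leaf tree in some layer corresponds to two consecutive cuts with no node between them, which are nonetheless distinct cuts, so no configuration is lost or double counted.

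The conceptual crux is the identity~$k = m$, but that is a one-line rank computation; the step demanding the most care is the layer decomposition, where one must check that it is an honest bijection, namely that the interface of each substitution level is a genuine cut (a node set meeting every root-to-leaf path exactly once) and that distinct painted trees never collapse to the same nested term. Once the layering is set up this verification is routine, and assembling the two parts gives the announced count~$m! \, [y^{n+1}] \CGF^{(m+1)}(y)$.
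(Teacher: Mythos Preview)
The paper does not prove this proposition; it merely cites \cite[Prop.~126]{ChapotonPilaud}. Your argument is correct and is essentially the standard one: the rank computation forces~$k=m$ so that~$\mu$ contributes the factor~$m!$, and the remaining count of~$(T,C)$ is obtained by slicing along the~$m$ disjoint cuts into~$m+1$ layers of binary trees, which translates into the~$(m{+}1)$-fold self-composition of the Catalan series with~$y$ marking leaves. Your handling of the degenerate case (a single-leaf layer between two adjacent cuts, accounted for by the term~$y$ in~$\CGF(y)$) is the only point requiring care, and you address it.
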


\begin{proposition}[{\cite[Prop.~127]{ChapotonPilaud}}]
\label{prop:numberShortPaintedTrees}
The number of rank $m+n-2$ (that is, corank~$1$) $m$-painted $n$-trees~is
\[
 \binom{n+1}{2} - 1 + 2^{m+n} - 2^n.
\]
See \cref{table:facetsMultiplihedra} in \cref{subsec:tablesMultiplihedra} for the first few numbers.
\end{proposition}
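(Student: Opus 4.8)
The plan is to classify the $m$-painted $n$-trees of rank $m+n-2$ directly, exploiting the rank formula of the previous proposition. First I would rewrite the rank in a more transparent form: since $|\bigcup C|$ is the number of nodes lying on at least one cut, the quantity $|T| - |\bigcup C|$ counts the nodes lying on no cut, which I call \emph{free} nodes. Writing $f$ for their number and $k = |C|$ for the number of cuts, the rank becomes $m+n-f-k$, so an $m$-painted $n$-tree has rank $m+n-2$ if and only if $f + k = 2$. Since $1 \le k \le m$ when $m \ge 1$ and $k = 0$ when $m = 0$, the only possibilities are $(k,f) \in \{(0,2),(1,1),(2,0)\}$, and I would count each family separately (keeping in mind that free nodes are never unary, as all unary nodes lie in $\bigcup C$).

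The case $(k,f)=(0,2)$ occurs only for $m = 0$ and consists of the $n$-trees with exactly two internal nodes, both of arity at least $2$; a direct count (choosing the two arities $\ell_1 + \ell_2 = n+2$ with $\ell_1, \ell_2 \ge 2$ and the position of the lower node among the $\ell_1$ children of the upper one) gives $\sum_{\ell_1 = 2}^n \ell_1 = \binom{n+1}{2} - 1$. For the case $(k,f) = (1,1)$ there is a single cut, so $\mu = ([m])$ is forced, and a single free node $f_0$. I would first argue that $f_0$ lies uniformly above or uniformly below the cut, since the unique cut node on every path through $f_0$ is forced to be the same ancestor (resp.\ descendant) of $f_0$. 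If $f_0$ is above the cut it must be the root, and the tree is forced to have depth $2$ with the root of arity $a \ge 2$, all children lying on the cut and carrying only leaves; summing the compositions of $n+1$ into $a \ge 2$ parts yields $\sum_{a \ge 2}\binom{n}{a-1} = 2^n - 1$. If $f_0$ is below the cut, the absence of internal nodes above the cut forces the cut to be the root alone and $f_0$ to be its unique internal child carrying only leaves, giving $\sum_{a=1}^n a = \binom{n+1}{2}$ trees. Hence this case contributes $2^n - 1 + \binom{n+1}{2}$, independently of $m$.

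The case $(k,f) = (2,0)$ requires $m \ge 2$; here all internal nodes lie on the two cuts $c_1$ below $c_2$. The crux is that strict nesting forces every root-to-leaf path to carry two distinct internal nodes, one on each cut, hence exactly two, so again the upper cut is the root alone, the lower cut is the set of its children, and every child carries only leaves. Counting the arities $b_i \ge 1$ of the $a \ge 1$ children gives $\sum_{a \ge 1}\binom{n}{a-1} = 2^n$ trees, and each is decorated by an ordered set partition of $[m]$ into two nonempty parts, of which there are $2^m - 2$; this case contributes $2^n(2^m-2) = 2^{m+n} - 2^{n+1}$. Adding the three contributions gives $\binom{n+1}{2} - 1 + 2^{m+n} - 2^n$, the cases specialising correctly at $m = 0$ and $m = 1$, where the missing families are empty. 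The main obstacle is the rigidity argument common to the last two cases: showing that the condition that all but one (resp.\ all) internal nodes lie on the cuts, together with the strict nesting of consecutive cuts, forces the tree to have depth at most $2$ of the precise shape described, via the ``one node per path'' constraint. Once this structural reduction is in place, the enumeration reduces to routine binomial bookkeeping.
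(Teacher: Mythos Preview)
Your proof is correct. The paper itself does not supply a proof of this proposition: it simply cites \cite[Prop.~127]{ChapotonPilaud}, so there is no argument in the present paper to compare against. Your approach---rewriting the rank as $m+n-f-k$ with $f$ the number of free nodes and $k$ the number of cuts, and then splitting into the three cases $(k,f)\in\{(0,2),(1,1),(2,0)\}$---is the natural one and is carried out soundly.

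A couple of small points worth tightening. First, the disjointness of cuts (your ``strict nesting'') is indeed the paper's convention: the preposet construction in \cref{def:preorderPaintedTree} speaks of ``the cut of~$C$ passing through~$x$'', so each node lies on at most one cut; you might cite this explicitly rather than leave it implicit. Second, in both the $(1,1)$ subcase with $f_0$ the root and the $(2,0)$ case, you should state explicitly that the root cannot have a leaf child, since such a root-to-leaf path would carry too few cut nodes; this is what forces \emph{all} children of the root to be internal and hence on the (lower) cut. You have the right idea (``every root-to-leaf path carries two distinct internal nodes''), but the conclusion that no child is a leaf deserves one more sentence. With those clarifications, the rigidity reductions are airtight and the binomial bookkeeping gives exactly the claimed formula.
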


\begin{proposition}[{\cite[Prop.~128]{ChapotonPilaud}}]
\label{prop:numberPaintedTrees}
The generating function~$\smash{\PTGF(x,y,z) \eqdef \!\!\!\! \sum\limits_{m,n,p} \!\!\! PT(m,n,p) \, x^m y^n z^p}$ of the number of rank~$p$ $m$-painted $n$-trees is given by
\[
\PTGF(x,y,z) = \sum_m x^m \sum_{k = 0}^m \SGF \big( \tSGF^{(k)}(y,z), z \big) \, \surjections{m}{k} \, z^{m-k},
\]
where~$\surjections{m}{k}$ is the number of surjections from~$[m]$ to~$[k]$,
\[
\SGF(y,z) = \frac{1+yz-\sqrt{1-4y-2yz+y^2z^2}}{2(z+1)}
\]
is the Schr\"oder generating function, and~$\tSGF^{(i)}(y,z)$ is defined for~$i \ge 0$~by
\[
\tSGF^{(0)}(y,z) \eqdef y,
\quad
\tSGF^{(1)}(y,z) \eqdef (1+z) \, \SGF(y,z) - yz
\quad\text{and}\quad
\tSGF^{(i+1)}(y,z) \eqdef \tSGF^{(i)} \big( \tSGF^{(1)}(y,z), z \big).
\]
See \cref{table:facesMultiplihedra} in \cref{subsec:tablesMultiplihedra} for the first few numbers.
\end{proposition}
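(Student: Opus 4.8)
The plan is to peel an $m$-painted $n$-tree $\PT \eqdef (T, C, \mu)$ into its three independent ingredients — the ordered partition $\mu$, the cuts $C$, and the underlying tree $T$ — and to read the rank off as a sum of local node contributions. First I would rewrite the rank recalled in the ranking of the refinement poset above as
\[
m + n - |T| - |C| + |\bigcup C| = (m - |C|) + (n - |T| + |\bigcup C|).
\]
Since the choice of ordered partition $\mu$ of $[m]$ into $k \eqdef |C|$ nonempty parts is made independently of $(T,C)$, and such ordered set partitions are in bijection with surjections $[m] \to [k]$, the datum $\mu$ contributes a factor $x^m \, \surjections{m}{k}$, while the summand $m - |C| = m - k$ of the rank contributes $z^{m-k}$. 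This already yields the outer shape $\PTGF(x,y,z) = \sum_m x^m \sum_{k} \surjections{m}{k}\, z^{m-k}\, G_k(y,z)$, where $G_k(y,z)$ enumerates the tree-with-cuts pairs $(T,C)$ with exactly $k$ cuts, with $y$ marking leaves and $z$ marking the residual rank $q \eqdef n - |T| + |\bigcup C|$ (the vanishing terms $\surjections{m}{0}$ for $m \ge 1$ and $\surjections{m}{k}$ for $k>m$ enforce $k \in [m]$). It then remains to prove $G_k = \SGF(\tSGF^{(k)}(y,z), z)$.

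Next I would localize the residual rank. Using $n = \sum_{v}(\deg v - 1)$, summed over the internal nodes $v$ of $T$ (where $\deg v$ is the number of children), I rewrite
\[
q = \sum_{v \text{ non-cut}} (\deg v - 2) + \sum_{v \text{ cut}} (\deg v - 1),
\]
so a non-cut node of degree $\ell$ carries weight $z^{\ell-2}$ and a cut node of degree $\ell$ carries weight $z^{\ell-1}$. Because $\bigcup C$ contains every unary node, all non-cut nodes have degree $\ge 2$; this both forces $q \ge 0$ and identifies every cut-free region of $T$ as a Schröder tree. I would then recall that such Schröder trees, weighted by $y$ per leaf and $z^{\ell-2}$ per degree-$\ell$ node, are governed by $\SGF = y + \sum_{\ell \ge 2} z^{\ell-2}\SGF^\ell = y + \SGF^2/(1 - z\SGF)$, whose resolution as a quadratic in $\SGF$ gives exactly the closed form in the statement.

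With these weights fixed I would set up the layered decomposition induced by the cuts. The nesting condition ($c_i$ \emph{below} $c_{i+1}$) makes the cuts pairwise parallel with disjoint node sets, so they slice $T$ into $k+1$ layers: reading from the root, the top layer is a cut-free Schröder tree whose leaves are precisely the nodes of the topmost cut; each such cut node of degree $\ell$ contributes its weight $z^{\ell-1}$ and spawns $\ell$ cut-free Schröder subtrees forming the next layer, whose leaves are the nodes of the next cut, and so on down to the bottom layer of genuine leaves. This is a substitution of generating functions: ``one cut node together with the cut-free subtrees directly below it'' is enumerated by $\tSGF^{(1)}(Y,z) \eqdef \sum_{\ell \ge 1} z^{\ell-1}\SGF(Y,z)^\ell = \SGF(Y,z)/(1 - z\SGF(Y,z))$, where $Y$ marks the leaves of those subtrees (the attaching points of the next cut); the same quadratic shows $\tSGF^{(1)}(y,z) = (1+z)\SGF - yz$, as stated. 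Iterating this substitution, and using that iterates of $\tSGF^{(1)}$ commute, an induction shows that the $j$-cut structure hanging from and including its topmost cut node is enumerated by $\tSGF^{(j)} = \tSGF^{(j-1)}(\tSGF^{(1)}(y,z),z)$, with base case $\tSGF^{(0)} = y$ (no cut below: a bare leaf). Substituting $\tSGF^{(k)}$ for the leaf variable of the outer top Schröder layer yields $G_k = \SGF(\tSGF^{(k)}, z)$, and combining with the first paragraph gives the claimed expression for $\PTGF$.

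I expect the main obstacle to be the clean justification that the layered substitution is a bijection onto tree-with-cuts pairs: one must verify that the \emph{below} order really produces well-defined, pairwise parallel layers with disjoint node sets, that every node strictly between two consecutive cuts (as well as above the top cut and below the bottom one) is forced to be non-unary and hence Schröder, and that each pair $(T,C)$ arises exactly once from the nested substitution, with nothing double counted and nothing missed. The secondary source of friction is the rank bookkeeping across cut and non-cut nodes, and in particular keeping the off-by-one between the leaf-marking variable $y$ and the parameter $n$ consistent throughout the substitutions.
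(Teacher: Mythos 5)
Your proposal is correct, and it is essentially the intended argument: the paper gives no proof of this statement (it is quoted from \cite[Prop.~128]{ChapotonPilaud}), but your outer factorization (the ordered partition contributing $\surjections{m}{k}$ and the rank contribution $z^{m-k}$) combined with the inner layered substitution along the cuts is exactly the pattern of the proof the paper does supply for the analogous \cref{prop:numberLightedShades}, where the linear structure of shades turns your composition $\SGF\big(\tSGF^{(k)}(y,z),z\big)$ into the product $\tau^\ge(y,z)^k \big(1-\tau^>(y,z)\big)^{-(k+1)}$. Your two flagged ``obstacles'' are genuine but unproblematic: the pairwise disjointness of the cuts that your layering requires is indeed forced by the definition (otherwise already the vertex count of $\Multiplihedron[2][0]$ would be wrong), and your functional-equation checks $\SGF = y + \SGF^2/(1-z\SGF)$ and $\SGF/(1-z\SGF) = (1+z)\SGF - yz$ are exactly the identities needed to match the closed forms in the statement. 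As for the off-by-one, it lies in the statement as transcribed here rather than in your argument: the right-hand side marks leaves, so one should extract $[y^{n+1}]$, consistently with \cref{prop:numberBinaryPaintedTrees}.
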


\begin{example}
When~$m = 0$, the number of $0$-painted $n$-trees of rank $0$, rank $n-2$ and arbitrary rank are respectively given by the classical Catalan numbers (\OEIS{A000108}), the interval numbers (\OEIS{A000096}) and the Schr\"oder numbers (\OEIS{A001003}).
\end{example}

%%%%%%%%%

\begin{figure}[p]
	\centerline{\includegraphics[scale=1.3]{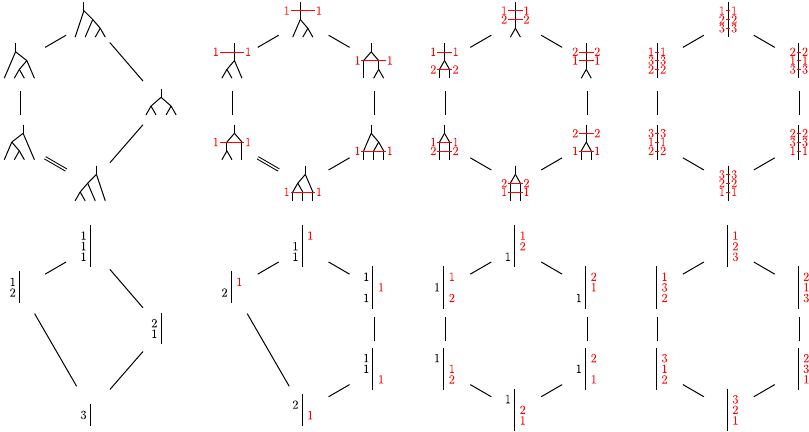}}
%	\centerline{\input{multiplihedronFreehedronLabeledLatticeDoubled3}}
	\caption{The $m$-painted $n$-tree rotation lattice (top) and the $m$-lighted $n$-shade rotation lattice (bottom) for $(m,n) = (0,3)$, $(1,2)$, $(2,1)$, and~$(3,0)$ (left to right).}
	\label{fig:multiplihedronFreehedronLabeledLattice3}
\end{figure}

\begin{figure}[p]
	\vspace{.7cm}
	\centerline{\includegraphics[scale=1.1]{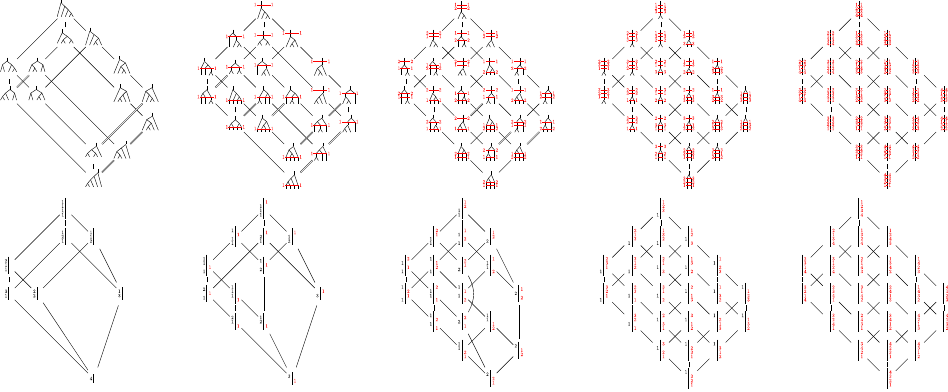}}
%	\centerline{\scalebox{.5}{\input{multiplihedronFreehedronLabeledLatticeDoubled4}}}
	\caption{The $m$-painted $n$-tree rotation lattice (top) and the $m$-lighted $n$-shade rotation lattice (bottom) for $(m,n) = (0,4)$, $(1,3)$, $(2,2)$, $(3,1)$, and~$(4,0)$ (left to right).}
	\label{fig:multiplihedronFreehedronLabeledLattice4}
\end{figure}

%\begin{figure}[h]
%	\centerline{\includegraphics[scale=.9]{multiplihedronFreehedronLabeledLattice04}}}
%%	\centerline{\input{multiplihedronFreehedronLabeledLattice04}}
%	\caption{The $0$-painted $4$-tree (left) and $0$-lighted $4$-shade (right) rotation lattices.}
%\end{figure}
%
%\begin{figure}[h]
%	\centerline{\includegraphics[scale=.9]{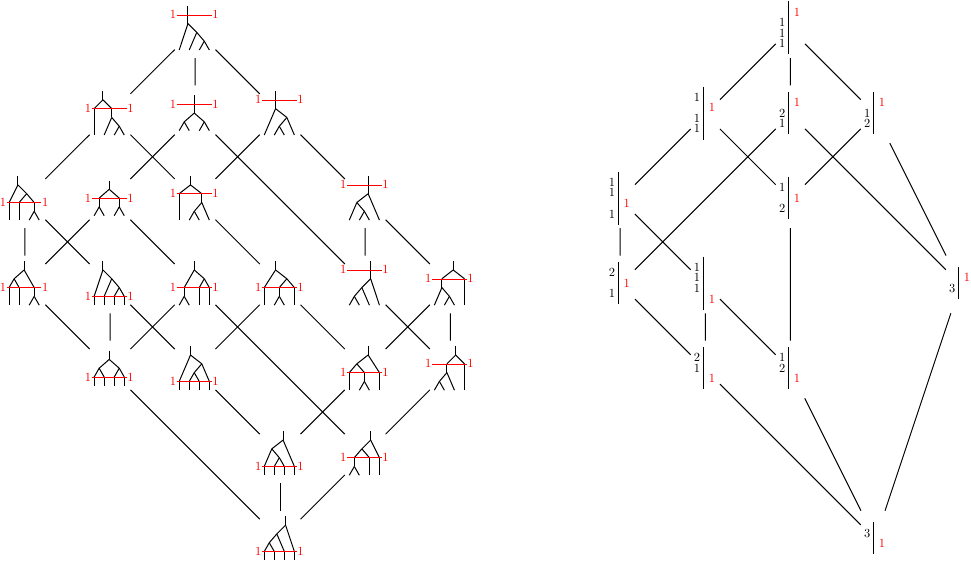}}}
%%	\centerline{\input{multiplihedronFreehedronLabeledLattice13}}
%	\caption{The $1$-painted $3$-tree (left) and $1$-lighted $3$-shade (right) rotation lattices.}
%\end{figure}
%
%\begin{figure}[h]
%	\centerline{\includegraphics[scale=.9]{multiplihedronFreehedronLabeledLattice22}}}
%%	\centerline{\input{multiplihedronFreehedronLabeledLattice22}}
%	\caption{The $2$-painted $2$-tree (left) and $2$-lighted $2$-shade (right) rotation lattices.}
%\end{figure}
%
%\begin{figure}[h]
%	\centerline{\includegraphics[scale=.9]{multiplihedronFreehedronLabeledLattice31}}}
%%	\centerline{\input{multiplihedronFreehedronLabeledLattice31}}
%	\caption{The $3$-painted $1$-tree (left) and $3$-lighted $1$-shade (right) rotation lattices.}
%\end{figure}
%
%\begin{figure}[h]
%	\centerline{\includegraphics[scale=.9]{multiplihedronFreehedronLabeledLattice40}}}
%%	\centerline{\input{multiplihedronFreehedronLabeledLattice40}}
%	\caption{The $4$-painted $0$-tree (left) and $4$-lighted $0$-shade (right) rotation lattices.}
%\end{figure}

%%%%%%%%%

\subsection{$m$-lighted $n$-shades}
\label{subsec:lightedShades}

We now introduce the main new characters of this paper, which will later appear as certain shadows of $m$-painted $n$-trees.

\begin{definition}
\label{def:lightedShades}
An \defn{$n$-shade} is a sequence of (possibly empty) tuples of integers, whose total sum is~$n$.
%We call \defn{length} the number of tuples in this sequence.
%\end{definition}
%
%%\begin{definition}
%%A \defn{cut} in an $n$-shade~$S$ is just the position of one of the tuples of~$S$.
%%A cut~$C$ is \defn{below} a cut~$C'$ if the position~$C$ is larger than the position~$C'$.
%%\end{definition}
%
%\begin{definition}
An \defn{$m$-lighted $n$-shade}~$\LS \eqdef (S, C, \mu)$ is an $n$-shade~$S$ together with a set~$C$ of $k$ distinguished positions in~$S$, containing all positions of empty tuples of~$S$, and an ordered partition~$\mu$ of~$[m]$ into~$k$ parts for some~$k \in [m]$.
\end{definition}

\begin{remark}
Alternatively, we could define an $m$-lighted $n$-shade as a pair~$(S,C)$ of sequences of the same length, where~$S$ contains (possibly empty) tuples of integers and has total sum~$n$, while $C$ contains (possibly empty) disjoint subsets of~$[m]$ whose union is~$[m]$, and~$c_i$ is nonempty when~$s_i$ is the empty tuple.
We preferred the version of \cref{def:lightedShades} to be more parallel to \cref{def:paintedTrees}.
\end{remark}

We represent an $m$-lighted $n$-shade~$\LS \eqdef (S, C, \mu)$ as a vertical line, with the tuples of the sequence~$S$ in black on the left, and the cuts of~$C$ in red on the right, all from top to bottom. As there is no ambiguity, we write $12$ for the tuple $(1,2)$ or the set~$\{1,2\}$. See \cref{fig:lightedShades,fig:deletionsLightedShades,fig:rotationsLightedShades} for illustrations.

\begin{figure}[h]
	\centerline{\includegraphics[scale=.9]{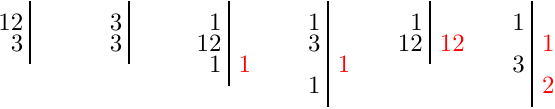}}
%	\centerline{
%		\lightedShade[1]{[{12} [3 []]]}{}
%		\hspace{-.5cm}
%		\lightedShade[1]{[3 [3 []]]}{}
%		\hspace{-.5cm}
%		\lightedShade[1]{[1 [{12} [1, tier=1 []]]]}{1}
%		\hspace{-.5cm}
%		\lightedShade[1]{[1 [3 [, tier=1 [1 []]]]]}{1}
%		\hspace{-.5cm}
%		\lightedShade[1]{[1 [{12}, tier=12 []]]}{12}
%		\hspace{-.5cm}
%		\lightedShade[1]{[1 [, tier=1 [3 [, tier=2 []]]]]}{1,2}
%	}
	\caption{Some $m$-lighted $n$-shades with~$m + n = 6$.}
	\label{fig:lightedShades}
\end{figure}

We now associate to each $m$-lighted $n$-shade a preposet on~$[m+n]$.
These preposets will be helpful in several places.

\begin{definition}
\label{def:preorderLightedShade}
Consider an $m$-lighted $n$-shade~$\LS \eqdef (S, C, \mu)$.
Define the \defn{range}~$r(s)$ of a tuple~$s$ of~$S$ as the interval~$]u,v]$ where~$u$ (resp.~$v$) is $m$ plus the sum of all entries of all tuples of~$S$ which appear strictly (resp.~weakly) earlier than~$s$.
Define the \defn{preceeding sum}~$ps(x)$ of an entry~$x$ in a tuple of~$S$ as $m$ plus the sum of all entries that appear weakly before~$x$ in~$S$ (meaning either all entries in a strictly earlier tuple of~$S$, or the weakly earlier entries in the same tuple as~$x$).
For each tuple~$s = (s_1, \dots, s_\ell)$ of~$S$, let~$N(s)$ denote the union of the part of~$\mu$ corresponding to the cut of~$C$ passing through~$s$ (empty set if~$s$ is in no cut of~$C$) and the set~$\{ps(s_1), \dots, ps(s_\ell)\}$.
Define also~$M(s) \eqdef r(s) \ssm N(s)$.
Finally, consider the directed graph with one node~$N(s)$ for each tuple~$s$ of~$S$ and one singleton node~$\{i\}$ for each~$i \in \bigcup_{s \in S} M(s)$, where~$N(s)$ has incoming arcs from the singletons~$\{i\}$ for~$i \in M(s)$ and the node~$N(s')$ of the next tuple~$s'$ of~$S$ after~$s$.
We then define~$\preccurlyeq_{\LS}$ as the preposet on~$[m+n]$ where~$i \preccurlyeq_{\LS} j$ if there is a (possibly empty) oriented path from the node containing~$i$ to the node containing~$j$ in the resulting oriented graph.
%given by the relations
%\begin{itemize}
%\item $i \preccurlyeq_{\LS} j$ if~$i,j \in [m]$ and~$i$ appears weakly after~$j$ in~$\mu$,
%\item $k \preccurlyeq_{\LS} ps(y)$ if~$x$ and~$y$ are elements of tuples of~$S$ such that the tuple of~$x$ appears weakly after the tuple of~$y$, and~$ps(x)-x < k \le ps(x)$,
%\item $i \preccurlyeq_{\LS} ps(x)$ if~$i \in [m]$ and~$x$ is an element of a tuple of~$S$ which appears weakly before the cut containing~$i$,
%\item $k \preccurlyeq_{\LS} i$ if~$i \in [m]$ and~$ps(x)-x < k \le ps(x)$ for some element~$x$ of a tuple of~$S$ which appears weakly after the cut containing~$i$.
%\end{itemize}
See \cref{fig:preposetsLightedShades}.
\begin{figure}[h]
	\centerline{\includegraphics[scale=.9]{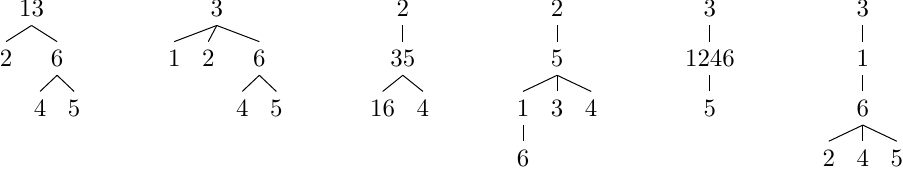}}
%	\centerline{
%		\tree{[.13 [.2 ] [.6 [.4 ] [.5 ] ] ]}
%		\hspace{.3cm}
%		\tree{[.3 [.1 ] [.2 ] [.6 [.4 ] [.5 ] ] ]}
%		\hspace{.3cm}
%		\tree{[.2 [.35 [.16 ] [.4 ] ] ]}
%		\hspace{.3cm}
%		\tree{[.2 [.5 [.1 [.6 ] ] [.3 ] [.4 ] ] ]}
%		\hspace{.3cm}
%		\tree{[.3 [.1246 [.5 ] ] ]}
%		\hspace{.3cm}
%		\tree{[.3 [.1 [.6 [.2 ] [.4 ] [.5 ] ] ] ]}
%	}
	\caption{The preposets~$\preccurlyeq_{\LS}$ associated to the $m$-lighted $n$-shades~$\LS$ of \cref{fig:lightedShades}.}
	\label{fig:preposetsLightedShades}
\end{figure}
\end{definition}

\begin{remark}
\label{rem:caterpillar}
Define the Hasse diagram of a preposet~$\preccurlyeq$ on~$X$ to be the Hasse diagram of the poset~${\preccurlyeq} / {\equiv}$ on the classes of the equivalence relation~${\equiv} \eqdef \set{(x,y) \in X \times X}{x \preccurlyeq y \text{ and } y \preccurlyeq x}$ defined by~$\preccurlyeq$.
The description of \cref{def:preorderLightedShade} implies that the Hasse diagram of the preposet~$\preccurlyeq_{\LS}$ of an $m$-lighted $n$-shade~$\LS$ is always a tree, in contrast to the preposet~$\preccurlyeq_{\PT}$ of an $m$-painted $n$-tree~$\PT$.
In fact, the Hasse diagram of~$\preccurlyeq_{\LS}$ is a \defn{caterpillar tree}, \ie a path containing the nodes~$N(s)$ for all tuples~$s$ of~$S$, to which are attached the nodes of~$\bigcup_{s \in S} M(s)$.
%More precisely, the Hasse diagram of~$\preccurlyeq_{\LS}$ is a caterpillar forest, whose path contains one node~$\{ps(x_1), \dots, ps(x_k)\}$ for each tuple~$(x_1, \dots, x_k)$ of~$\LS$.
\end{remark}

We now use these preposets to define the coarsening poset on $m$-lighted $n$-shades.

\begin{definition}
The \defn{$m$-lighted $n$-shade coarsening poset} is the poset on $m$-lighted $n$-shades defined by coarsening of their corresponding preposets, that is, $\LS \le \LS'$ if~${\preccurlyeq_{\LS}} \subseteq {\preccurlyeq_{\LS'}}$.
\end{definition}

\begin{remark}
\label{rem:coarseningLightedShades}
Alternatively, we could describe the cover relations of the $m$-lighted $n$-shades coarsening poset combinatorially by two types of operations, as illustrated in \cref{fig:deletionsLightedShades}.
Namely, to obtain the elements covering an $m$-lighted $n$-shade, one can
\begin{enumerate}[(i)]
\item concatenate two consecutive (possibly empty) tuples, and merge their (possibly empty) cuts,
\item replace one of the integers~$x$ inside a tuple by two integers~$y,z$ with~$x = y + z$ and~$y \ge 1$ and~$z \ge 1$.
\end{enumerate}
\begin{figure}[t]
	\centerline{\includegraphics[scale=.9]{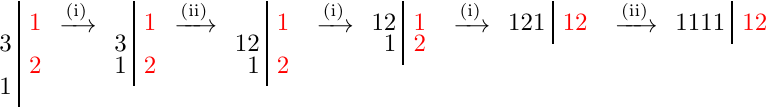}}
%	\centerline{
%		\lightedShade[1]{[, tier=1 [3 [, tier=2 [1 []]]]]}{1,2}
%		\hspace{-.3cm}\raisebox{-.7cm}{$\xrightarrow{\text{(i)}}$}\hspace{-.3cm}
%		\lightedShade[1]{[, tier=1 [3 [1, tier=2 []]]]}{1,2}
%		\hspace{-.3cm}\raisebox{-.7cm}{$\xrightarrow{\text{(ii)}}$}\hspace{-.3cm}
%		\lightedShade[1]{[, tier=1 [{12} [1, tier=2 []]]]}{1,2}
%		\hspace{-.3cm}\raisebox{-.7cm}{$\xrightarrow{\text{(i)}}$}\hspace{-.3cm}
%		\lightedShade[1]{[{12}, tier=1 [1, tier=2 []]]}{1,2}
%		\hspace{-.3cm}\raisebox{-.7cm}{$\xrightarrow{\text{(i)}}$}\hspace{-.3cm}
%		\lightedShade[1]{[{121}, tier=12 []]}{12}
%		\hspace{-.3cm}\raisebox{-.7cm}{$\xrightarrow{\text{(ii)}}$}\hspace{-.3cm}
%		\lightedShade[1]{[{1111}, tier=12 []]}{12}
%	}
	\caption{Coarsenings of some $2$-lighted $4$-shades. Each coarsening is labeled by its type.}
	\label{fig:deletionsLightedShades}
\end{figure}
\end{remark}

\enlargethispage{.1cm}
For a sequence~$S \eqdef (s_1, \dots, s_\ell)$ of tuples, we define~$|S| \eqdef \ell$ and~$\|S\| \eqdef \sum_{i \in [\ell]} |s_i|$, where~$|s_i|$ is the length of the tuple~$s_i$.

\begin{proposition}
\label{prop:lightedShadeCoarseningLattice}
The $m$-lighted $n$-shade coarsening poset is a join semilattice ranked by~${m - |S| + \|S\|}$.
We call \defn{$m$-lighted $n$-shade coarsening lattice} the lattice obtained by adding a bottom element.
\end{proposition}

\begin{proof}
For the rank, if~$\LS \eqdef (S,C,\mu)$ and~$\LS' \eqdef (S',C',\mu')$ are obtained by one of the two operations of \cref{rem:coarseningLightedShades}, then we have
\begin{enumerate}[(i)]
\item $|S'| = |S|-1$ and~$\|S'\| = \|S\|$ when we concatenate two consecutive tuples,
\item $|S'| = |S|$ and~$\|S'\| = \|S\|+1$ when we refine an integer into two inside one of the tuples.
\end{enumerate}
In both situations, we get~$\rank(\LS') = \rank(\LS)+1$.
Finally, the join semilattice property will follow from~\cref{prop:fanLightedShades2}.
\end{proof}

We now define another lattice structure, but on minimal $m$-lighted $n$-shades.
\mbox{See \cref{fig:multiplihedronFreehedronLabeledLattice13,fig:multiplihedronFreehedronLabeledLattice3,fig:multiplihedronFreehedronLabeledLattice4}}.

\begin{definition}
\label{def:rotationLightedShades}
An $m$-lighted $n$-shade~$\LS \eqdef (S, C, \mu)$ is \defn{unary} if it has rank~$0$, meaning that all tuples in~$\bigcup C$ are empty tuples, while all tuples not in~$\bigcup C$ are singletons.
The \defn{unary $m$-lighted $n$-shade right rotation digraph} is the directed graph on unary $m$-lighted $n$-shades with three types of edges, illustrated in \cref{fig:rotationsLigthedShadesGeneric,fig:rotationsLightedShades}:
\begin{enumerate}[(i)]
\item replace a singleton~$(r)$ of~$S$ by two singletons~$(s), (t)$ with~$r = s + t$ and~$s \ge 1$ and~$t \ge 1$,
\item exchange a singleton of~$S$ with a cut of~$C$ below it,
\item exchange the labels of two consecutive cuts of~$C$ with no singleton in between them, passing the small label above the large label.
\end{enumerate}
\begin{figure}[t]
	\centerline{\includegraphics[scale=.9]{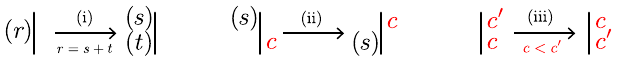}}
	\caption{Rotations of unary $m$-lighted $n$-shades. Each rotation is labeled by its type.}
	\label{fig:rotationsLigthedShadesGeneric}
\end{figure}
\begin{figure}[t]
	\centerline{\includegraphics[scale=.9]{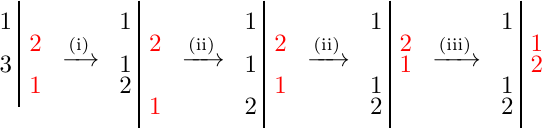}}
%	\centerline{
%		\lightedShade[1]{[1 [, tier=2 [3 [, tier=1 []]]]]}{1,2}
%		\hspace{-.3cm}\raisebox{-1.2cm}{$\xrightarrow{\text{(i)}}$}\hspace{-.3cm}
%		\lightedShade[1]{[1 [, tier=2 [1 [2 [, tier=1 []]]]]]}{1,2}
%		\hspace{-.3cm}\raisebox{-1.2cm}{$\xrightarrow{\text{(ii)}}$}\hspace{-.3cm}
%		\lightedShade[1]{[1 [, tier=2 [1 [, tier=1 [2 []]]]]]}{1,2}
%		\hspace{-.3cm}\raisebox{-1.2cm}{$\xrightarrow{\text{(ii)}}$}\hspace{-.3cm}
%		\lightedShade[1]{[1 [, tier=2 [, tier=1 [1 [2 []]]]]]}{1,2}
%		\hspace{-.3cm}\raisebox{-1.2cm}{$\xrightarrow{\text{(iii)}}$}\hspace{-.3cm}
%		\lightedShade[1]{[1 [, tier=1 [, tier=2 [1 [2 []]]]]]}{2,1}
%	}
	\caption{Rotations of some unary $2$-lighted $4$-shades. Each rotation is labeled by its type.}
	\label{fig:rotationsLightedShades}
\end{figure}
\end{definition}

\begin{remark}
\label{rem:rotationsLightedShades}
As in \cref{rem:rotationsPaintedTrees}, we could alternatively describe the right rotations on unary $m$-lighted $n$-shades using their poset of \cref{def:preorderLightedShade}.
Namely, there is an edge joining~$\LS$ to~$\LS'$ if and only if there are linear extensions~$\sigma$ of~$\preccurlyeq_{\LS}$ and~$\sigma'$ of~$\preccurlyeq_{\LS'}$ such that $\sigma \lessdot \sigma'$ is a cover relation in the weak order.
The equivalence between these two perspectives follows from \cref{prop:fanLightedShades,prop:graphLightedShades}.
\end{remark}

\begin{remark}
\label{rem:rotationGraphLightedShadesRegular}
From~\cref{def:rotationLightedShades}, we observe that any unary $m$-lighted $n$-shade~$\LS$ with singleton tuples~$s_1, \dots, s_r$ admits~$m+r-1+\sum_{i \in [r]} (s_i-1) = m+n-1$ (left or right) rotations.
In other words, the (undirected) rotation graph is regular of degree~$m+n-1$.
Note that this can also be seen as a consequence of \cref{def:preorderLightedShade} as the Hasse diagram of~$\preccurlyeq_{\LS}$ is a tree.
\end{remark}

The next statement will follow from \cref{prop:shadowMapSemilatticeMapRotation}.

\begin{proposition}
\label{prop:lightedShadeRotationLattice}
The unary $m$-lighted $n$-shade right rotation digraph is the Hasse diagram of a lattice.
\end{proposition}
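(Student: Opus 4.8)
The plan is to deduce the lattice property of the target poset from the already established lattice property of the source, using the shadow map as a bridge. Write $L$ for the binary $m$-painted $n$-tree right rotation digraph, which is the Hasse diagram of a lattice (recalled above from~\cite{ChapotonPilaud}), and write $P$ for the unary $m$-lighted $n$-shade right rotation poset that we wish to understand. The shadow map restricts to a map $\shadow : L \to P$, and I would first record that this restriction is \emph{surjective} and \emph{order preserving} for the two rotation orders; this is part of what \cref{prop:shadowMapSemilatticeMapRotation} provides. Surjectivity holds because every unary lighted shade is the shadow of at least one binary painted tree, and order preservation because each rotation of a painted tree maps either to a rotation of its shadow or to the shadow itself, so that cover relations are sent to relations of $P$.

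The heart of the argument is the meet compatibility furnished by \cref{prop:shadowMapSemilatticeMapRotation}, which I would use in the following concrete form: for all $\PT, \PT' \in L$ and every $\PT'' \in L$ with $\shadow(\PT'') \le \shadow(\PT)$ and $\shadow(\PT'') \le \shadow(\PT')$, one has $\shadow(\PT'') \le \shadow(\PT \meet \PT')$, where $\meet$ denotes the meet in $L$. Granting this, I claim $P$ is a meet semilattice. Indeed, given $p, q \in P$, choose by surjectivity $\PT, \PT'$ with $\shadow(\PT) = p$ and $\shadow(\PT') = q$, and set $r \eqdef \shadow(\PT \meet \PT')$. Order preservation gives $r \le p$ and $r \le q$, so $r$ is a common lower bound; and if $s \le p$ and $s \le q$ for some $s \in P$, then writing $s = \shadow(\PT'')$ and applying the property above yields $s \le r$. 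Hence $r$ is the greatest lower bound of $p$ and $q$, which in particular shows that it does not depend on the chosen representatives.

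It remains to promote the meet semilattice $P$ to a lattice. Since $L$ is a finite lattice it has a greatest element $\hat{1}$, and because $\shadow$ is surjective and order preserving, $\shadow(\hat{1})$ is a greatest element of $P$: any $p = \shadow(\PT) \in P$ satisfies $p \le \shadow(\hat{1})$. A finite meet semilattice with a greatest element is automatically a lattice, since the join of $p$ and $q$ is recovered as the meet of the nonempty set of their common upper bounds. This completes the reduction, and the rotation digraph is then the Hasse diagram of this lattice by construction of the rotation order.

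The one genuinely substantial step is the meet compatibility of the shadow map, namely \cref{prop:shadowMapSemilatticeMapRotation}; everything in the present statement is the soft lattice-theoretic packaging around it. I expect the main obstacle to lie precisely there: one must understand how the fibers of $\shadow$ sit inside $L$ and check that taking the meet in $L$ of two trees whose shadows dominate a third shade yields a tree whose shadow still dominates that shade. The fact that $\shadow$ is emphatically \emph{not} a join morphism (as stressed in the introduction) warns that the symmetric statement fails, so the argument must genuinely exploit the asymmetry of the shadow construction, namely that it collects the arity sequence along the right branch, rather than any formal duality.
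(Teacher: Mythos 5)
Your proof is correct and follows essentially the same route as the paper: the paper likewise deduces this proposition from \cref{prop:shadowMapSemilatticeMapRotation}, whose proof shows via \cref{prop:characterizationMeetSemilatticeCongruence} that the shadow congruence is a meet semilattice congruence on the binary $m$-painted $n$-tree rotation lattice, so that the unary $m$-lighted $n$-shade rotation poset is a meet semilattice quotient, hence a meet semilattice, hence a lattice as it is bounded. Your explicit verification of meets through representatives (the ``concrete form'' of meet compatibility) and of the top element $\shadow(\hat{1})$ is just an unwinding of that same quotient argument, and you correctly locate all the genuine combinatorial work in \cref{prop:shadowMapSemilatticeMapRotation}.
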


\begin{example}
When~$m \! = \! 0$, the $0$-lighted $n$-shade rotation lattice is the boolean lattice.
When~${m \! = \! 1}$, the $1$-lighted $n$-shade rotation lattice is the Hochschild lattice studied in~\cite{Chapoton-Dyck, Combe, Muhle}.
\end{example}

\begin{remark}
\label{rem:latticeProperties}
Computational experiments indicate that the $m$-lighted $n$-shade rotation lattice is constructible by interval doubling (hence semidistributive and congruence uniform).
However, in contrast to the case when~$m \le 1$, it is not extremal (see~\cite{Muhle} for context), and its Coxeter polynomial is not a product of cyclotomic polynomials (see \cite{Chapoton-PFCIC} and \cite[Appendix]{Combe} for context).
Nevertheless, its subposet induced by unary $m$-lighted $n$-shades where the labels of the lights are ordered (see also \cref{def:multiwords}) seems to enjoy all these nice properties.
The lattice properties of the $m$-lighted $n$-shade rotation lattice and its subposet deserve to be investigated~further.
\end{remark}

\enlargethispage{.4cm}
We conclude this section on $m$-lighted $n$-shades by some enumerative observations.
See also \cref{table:verticesHochschildPolytope,,table:facetsHochschildPolytope,,table:facesHochschildPolytope} in \cref{subsec:tablesHochschildPolytope}.

\begin{proposition}
\label{prop:numberUnaryLightedShades}
The number of unary $m$-lighted $n$-shades is
\[
m! \sum_{\ell = 1}^n \binom{m+\ell}{m} \binom{n-1}{\ell-1}.
\]
See \cref{table:verticesHochschildPolytope} in \cref{subsec:tablesHochschildPolytope} for the first few numbers.
\end{proposition}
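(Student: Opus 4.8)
The plan is to set up an explicit product decomposition of the data defining a unary $m$-lighted $n$-shade, so that the three factors $m!$, $\binom{m+\ell}{m}$, and $\binom{n-1}{\ell-1}$ appear one at a time, and then to sum over the number~$\ell$ of singletons.

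The crucial first step is to pin down the shape of a unary shade by means of the rank formula of \cref{prop:lightedShadeRefinementLattice}. For a unary $\LS \eqdef (S,C,\mu)$, every lit (distinguished) position carries the empty tuple and every other position carries a singleton; writing~$k$ for the number of lit positions and~$\ell$ for the number of singletons, we get $|S| = k + \ell$ and $\|S\| = \ell$, hence $\rank(\LS) = m - |S| + \|S\| = m - k$. Since $\LS$ has rank~$0$, this forces $k = m$, so a unary $m$-lighted $n$-shade has \emph{exactly}~$m$ lit positions. Consequently~$\mu$ is an ordered partition of~$[m]$ into~$m$ nonempty parts, \ie a partition into singletons, which is the same thing as a linear order on~$[m]$ and so contributes the factor~$m!$.

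Next I would record the resulting bijection. A unary $m$-lighted $n$-shade with~$\ell$ singletons is equivalent to three independent pieces of data: (a)~the interleaving pattern of the~$m$ lights and the~$\ell$ singletons along~$S$, \ie a word in two letters with~$m$ light-letters and~$\ell$ singleton-letters, of which there are $\binom{m+\ell}{m}$; (b)~the composition $(r_1, \dots, r_\ell)$ of~$n$ recording the singleton values read left to right, with each $r_i \ge 1$, of which there are $\binom{n-1}{\ell-1}$; and (c)~the linear order on~$[m]$ assigning the~$m$ labels to the~$m$ lights in their order of appearance, of which there are~$m!$. These three choices are independent and jointly recover~$S$, $C$, and~$\mu$, so the correspondence is a bijection. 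Multiplying the three counts and summing over $1 \le \ell \le n$ yields $m! \sum_{\ell=1}^n \binom{m+\ell}{m}\binom{n-1}{\ell-1}$, as claimed.

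The only real obstacle is the first step: one must invoke the rank formula to see that being unary forces exactly~$m$ lights, rather than merely $1 \le k \le m$ as the general definition of an $m$-lighted shade would suggest. Allowing $k < m$ would overcount (for instance, with $(m,n)=(2,1)$ the naive count gives~$8$ instead of the correct~$6$), so this reduction is what makes the factors line up term by term with the stated formula. After that the argument is a routine product of elementary enumerations; the only points to double-check are the standing convention $n \ge 1$ and the degenerate case $m = 0$, where there are no lights, $m! = 1$, and the count collapses to the $2^{n-1}$ compositions of~$n$, matching the boolean lattice.
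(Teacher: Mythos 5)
Your proof is correct and takes essentially the same route as the paper's: for each number $\ell$ of singletons you count $m!\binom{m+\ell}{\ell}\binom{n-1}{\ell-1}$ shades by choosing the order of the cut labels, the interleaving of the $m$ cuts with the $\ell$ singletons, and the composition of $n$ into $\ell$ positive parts, then sum over $\ell$. Your preliminary step using the rank formula to show that a unary shade has exactly $m$ cuts (so that $\mu$ is forced to be a permutation of $[m]$) is a careful justification of a point the paper's proof leaves implicit, but it does not change the argument.
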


\begin{proof}
The number of unary $m$-lighted $n$-shades with~$\ell$ singletons is given by~$m! \binom{m+\ell}{\ell} \binom{n-1}{\ell-1}$.
Namely, choose the order of the cuts (hence $m!$ choices), the position of the~$m$ cuts and~$\ell$ singletons (hence $\binom{m+\ell}{\ell}$ choices), and the values of the $\ell$ singletons (hence $\binom{n-1}{\ell-1}$ choices).
\end{proof}

\begin{proposition}
\label{prop:numberShortLightedShades}
The number of rank $m+n-2$ (that is, corank~$1$) $m$-lighted $n$-shades is
\[
(2^m+1)(n+1) - 4 + \delta_{n=0}.
\]
See \cref{table:facetsHochschildPolytope} in \cref{subsec:tablesHochschildPolytope} for the first few numbers.
\end{proposition}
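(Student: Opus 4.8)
The plan is to read off the answer from the rank formula of \cref{prop:lightedShadeRefinementLattice} together with a direct enumeration of the possible cut-and-label data. First I would pin down the top of the poset: since the rank of $\LS \eqdef (S,C,\mu)$ equals $m - |S| + \|S\|$, and since $\|S\| \le n$ (each entry of a tuple is at least~$1$ and the entries sum to~$n$) while $|S| \ge 1$, the rank is at most $m+n-1$. Demanding rank exactly $m+n-2$ is then equivalent to $\|S\| - |S| = n-2$, which together with the two bounds $\|S\| \le n$ and $|S| \ge 1$ leaves only the two possibilities $(\|S\|, |S|) = (n-1,1)$ and $(\|S\|, |S|) = (n,2)$.

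Next I would enumerate the shades of each shape and, for each underlying shade~$S$, count the admissible pairs $(C,\mu)$. Here the key input is that $C$ is a set of $k \eqdef |C|$ positions which must contain every position carrying an empty tuple, and that $\mu$ then ranges over the $\surjections{m}{k}$ ordered set partitions of~$[m]$ into $k$ parts. For the shape $(n-1,1)$ the single tuple has $n-1$ positive parts summing to~$n$, hence exactly one part equal to~$2$; this gives $n-1$ shades, each nonempty, forcing $k=1$ and a unique $(C,\mu)$, for a total of $n-1$. For the shape $(n,2)$ the two tuples are all-ones of lengths $(a,b)$ with $a+b=n$, and I would split according to how many are empty: the $n-1$ shades with $a,b\ge 1$ each admit $\sum_{k=1}^{2}\binom{2}{k}\surjections{m}{k}=2^m$ choices of $(C,\mu)$; the two shades with exactly one empty tuple (present when $n\ge 1$) each admit $\surjections{m}{1}+\surjections{m}{2}=2^m-1$ choices, the empty position being a forced cut; and the single shade with two empty tuples (present only when $n=0$) admits $\surjections{m}{2}=2^m-2$ choices.

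Finally I would sum the contributions. For $n\ge 1$ the total is $(n-1)+(n-1)2^m+2(2^m-1)$, which simplifies to $(2^m+1)(n+1)-4$; for $n=0$ only the two-empty-tuples shade survives, contributing $2^m-2=(2^m+1)(n+1)-4+1$, and the two regimes are unified by the $\delta_{n=0}$ term. I expect the main obstacle to be the bookkeeping of the cut set~$C$ against the empty tuples: the requirement that empty positions be cuts restricts the allowed values of~$k$ and is exactly what makes the $(n,2)$ shape break into three subcases and produces the $\delta_{n=0}$ correction. Getting the surjection counts right at the boundary (reading $\surjections{m}{1}=1$ and $\surjections{m}{2}=2^m-2$) is the only other point requiring care.
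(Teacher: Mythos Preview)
Your proof is correct and follows essentially the same approach as the paper's: reduce to the two shapes $|S|=1$ and $|S|=2$ via the rank formula, then enumerate the cut-and-label data in each subcase and sum. The paper phrases the count of $(C,\mu)$ as ``allocating the $m$ labels to the positions'' rather than through surjection numbers, and it obtains the $\delta_{n=0}$ correction by observing that the three $|S|=2$ subcases overlap when $n=0$ rather than by treating $n=0$ separately, but these are cosmetic differences.

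One small caveat: your identity $\sum_{k=1}^{2}\binom{2}{k}\surjections{m}{k}=2^m$ for the both-nonempty subcase fails at $m=0$ (the left side is $0$, not $1$), because for $m=0$ the cut set $C$ must be empty and the relevant term is $k=0$. Similarly ``forcing $k=1$'' in the $|S|=1$ case is only literally true for $m\ge 1$. Your final counts $2^m$ and $1$ are nonetheless correct in all cases, so this affects only the justification at $m=0$, not the conclusion; the paper's ``allocate labels'' phrasing sidesteps this boundary issue.
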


\begin{proof}
Consider an $m$-lighted $n$-shade~$\LS \eqdef (S, C, \mu)$ with~$S \eqdef (s_1, \dots, s_\ell)$
According to \cref{prop:lightedShadeCoarseningLattice}, $\LS$ has rank~$m+n-2$ if and only if~$n-2 = \|S\|-|S| = \sum_{i \in [\ell]} |s_i|-1$, or equivalently if and only if one of the following holds:
\begin{itemize}
\item either $\ell = 1$ and~$s_1 = 1^{i-1} 2 1^{n-1-i}$ for some~$i \in [n-1]$ (hence~$n-1$ choices), 
\item or~$\ell = 2$ and~$s_1 = 1^i$ while~$s_2 = 1^{n-i}$ for some~$i \in [n-1]$ and the $m$ labels are allocated arbitrarily on the two positions (hence~$2^m(n-1)$ choices),
\item or~$\ell = 2$ and~$s_1 = \varnothing$ while~$s_2 = 1^n$ and the $m$ labels are allocated on the two tuples, with at least one label on the first tuple (hence~$2^m-1$ choices),
\item or~$\ell = 2$ and~$s_1 = 1^n$ while~$s_2 = \varnothing$ and the $m$ labels are allocated on the two tuples, with at least one label on the second tuple (hence~$2^m-1$ choices).
\end{itemize}
We obtain that there are~$n-1+2^m(n-1)+2(2^m-1) = (2^m+1)(n+1) - 4$ rank~${m+n-2}$ \mbox{$m$-lighted} $n$-shades.
This analysis does not precisely apply when~$n = 0$ which requires the correction term~$\delta_{n=0}$.
\end{proof}

\begin{proposition}
\label{prop:numberLightedShades}
The generating function~$\smash{\LSGF(x,y,z) \eqdef \sum\limits_{m,n,p} LS(m,n,p) \, x^m y^n z^p}$ of the number of rank~$p$ $m$-lighted $n$-shades is given by
\[
\LSGF(x,y,z) = \sum_m x^m \sum_{k = 0}^m \frac{ \big( 1-y \big)^k \big( 1-y(z+1) \big)}{\big( 1-y(z+2) \big)^{k+1}} \, \surjections{m}{k} \, z^{m-k},
\]
where~$\surjections{m}{k}$ is the number of surjections from~$[m]$ to~$[k]$.
See \cref{table:facesHochschildPolytope} in \cref{subsec:tablesHochschildPolytope} for the first few numbers.
\end{proposition}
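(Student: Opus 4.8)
The plan is to build an $m$-lighted $n$-shade position by position, attach a weight in the variables $(x,y,z)$ to each local choice, and then assemble the pieces by decomposing the sequence of positions according to the cut set. By \cref{prop:lightedShadeRefinementLattice}, an $m$-lighted $n$-shade $\LS \eqdef (S,C,\mu)$ with $S \eqdef (s_1, \dots, s_\ell)$ has rank
\[
m - |S| + \|S\| = m + \sum_{i \in [\ell]} (|s_i| - 1),
\]
so the rank-marking variable $z$ splits as a global factor $z^m$ times a local factor $z^{|s_i|-1}$ per position $i$. Here $y$ marks the total entry sum $n$ and $x$ marks $m$.

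First I would record the weight of the tuple sitting at a single position. A nonempty tuple of length $r \ge 1$ with positive integer entries has weight $z^{r-1}\big(y/(1-y)\big)^r$, since each entry independently contributes $\sum_{a\ge1} y^a = y/(1-y)$ and the tuple contributes $z^{r-1}$ to the rank. Summing over $r \ge 1$ gives the nonempty-tuple series
\[
T(y,z) \eqdef \sum_{r\ge1} z^{r-1}\Big(\frac{y}{1-y}\Big)^r = \frac{y}{1-y-yz}.
\]
The key combinatorial point, forced by \cref{def:lightedShades}, is the asymmetry between the two kinds of positions: a non-cut position must carry a nonempty tuple (weight $T$), whereas a cut position may carry an empty tuple (weight $z^{-1}$, since then $|s_i|-1 = -1$) or a nonempty one (weight $T$). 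Fixing the number of cuts $k$, a shade decomposes uniquely into $k+1$ (possibly empty) runs of non-cut positions separated by the $k$ cuts, contributing $(1-T)^{-(k+1)}$ for the runs and $(z^{-1}+T)^k$ for the cuts. It is cleaner to absorb one factor of $z$ into each cut—legitimate because the sum only involves $k\le m$, as $\surjections{m}{k} = 0$ otherwise—so that a cut contributes the genuine power series $z(z^{-1}+T) = 1+zT$ while a residual global factor $z^{m-k}$ remains. Finally, the ordered partition $\mu$ of $[m]$ into the $k$ cuts, read in increasing position order, is exactly a surjection $[m]\to[k]$ and so contributes $\surjections{m}{k}$. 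Collecting everything yields
\[
\LSGF(x,y,z) = \sum_m x^m \sum_{k=0}^m \surjections{m}{k}\, z^{m-k}\, (1-T)^{-(k+1)} (1+zT)^k,
\]
where the $k=0$ term accounts for the cutless $m=0$ shades and vanishes for $m\ge1$.

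The remaining work is a routine simplification. Substituting $T = y/(1-y-yz)$ gives $1/(1-T) = (1-y(1+z))/(1-y(2+z))$ and $1+zT = (1-y)/(1-y(1+z))$, after which the factors $(1-y(1+z))^{k+1}$ and $(1-y(1+z))^{k}$ telescope, leaving precisely
\[
\frac{(1-y(z+1))(1-y)^k}{(1-y(z+2))^{k+1}},
\]
which is the claimed summand. I expect the only genuine obstacle to be the bookkeeping of the local weights: correctly distinguishing cut from non-cut positions, justifying the absorption of $z^k$ against the $z^m$ coming from the rank, and checking that matching $\mu$ to the cuts in position order contributes exactly $\surjections{m}{k}$ and nothing more. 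Once these points are pinned down the algebra is immediate.
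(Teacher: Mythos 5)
Your proof is correct and follows essentially the same route as the paper's: fix the number $k$ of cuts, weight nonempty tuples by $z^{|s|-1}y^{\mathrm{sum}}$ (your $T$ is the paper's $\tau^>$), decompose the shade into $k+1$ runs of non-cut positions separated by the $k$ cuts, multiply by $\surjections{m}{k}\,z^{m-k}$, and simplify. The only difference is expository: you make the $z$-bookkeeping explicit (the $z^{-1}$ for empty tuples and its absorption into the cut factor, so that your $1+zT$ is exactly the paper's $\tau^\ge$), where the paper simply asserts the factors.
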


\begin{proof}
Denote by
\[
\tau^\ge(y,z) = \frac{1}{1-yz/(1-y)} = \frac{1-y}{1-y(z+1)}
\qquad\text{(resp.}\quad
\tau^>(y,z) = \frac{yz}{1-y(z+1)}
\text{ )}
\]
the generating function of all (resp.~nonempty) tuples of integers, where~$y$ counts the sum, and $z$ counts the length.
For fixed integers~$0 \le k \le m$, we claim that the generating function~$\sum_{n,p} LS(m,n,p) \, y^n \, z^p$  of rank~$p$ $m$-lighted $n$-shades with~$k$ cuts is given by
\[
\big( \tau^\ge(y,z)/z \big)^k \Big( \frac{1}{1-\tau^>(y,z)/z} \Big)^{k+1} \, \surjections{m}{k} \, z^m = \frac{ \big( 1-y \big)^k \big( 1-y(z+1) \big)}{\big( 1-y(z+2) \big)^{k+1}} \, \surjections{m}{k} \, z^{m-k}
\]
Indeed, to construct a rank~$p$ $m$-lighted shade with~$k$ cuts, we need to choose
\begin{itemize}
\item $k$ possibly empty tuples for the cuts, hence $k$ factors~$\tau^\ge(y,z)/z$,
\item $k+1$ possibly empty sequences of nonempty tuples in between the cuts (including before the first cut and after the last cut), hence $k+1$ factors $\frac{1}{1-\tau^>(y,z)/z}$,
\item an ordered partition of~$[m]$ into $k$ parts, hence a factor~$\surjections{m}{k}$.
\end{itemize}
As the rank of an $m$-lighted $n$-shade~$\LS \eqdef (S,C,\mu)$ with~$S \eqdef (s_1, \dots, s_\ell)$ is given by~${m - |S| + \|S\|} = m + \sum_{i \in [\ell]} (|s_i| - 1)$, we divide both~$\tau^\ge(y,z)$ and~$\tau^>(y,z)$ by~$z$, and we finally multiply by~$z^m$.
\end{proof}

\begin{example}
When~$m = 0$, the number of $0$-painted $n$-trees of rank $0$, rank $n-2$ and arbitrary rank are respectively given by $2^{n-1}$ (\OEIS{A000079}), $2(n-1)$ (\OEIS{A005843}) and $3^{n-1}$ (\OEIS{A000244}).
When~$m = 1$, the number of $1$-painted $n$-trees of rank $0$, rank $n-1$ are respectively given by $2^{n-2}(n+3)$ (\OEIS{A045623}) and $3n-1$ (\OEIS{A016789}).
\end{example}

%%%%%%%%%

\subsection{Shadow map}
\label{subsec:shadow}

We now describe a natural shadow map sending an $m$-painted $n$-tree to an $m$-lighted $n$-shade.
Intuitively, the shadow is what you see on the right of the tree when the sun sets on its left.
For instance, the $m$-painted $n$-trees of \cref{fig:paintedTrees} are sent to the $m$-lighted $n$-shade of \cref{fig:lightedShades}.
We call \defn{right branch} of a tree~$T$ the path from the root to the rightmost leaf~of~$T$.

\begin{definition}
\label{def:shadowMap}
The \defn{shadow} of an $n$-tree~$T$ is the $n$-shade~$\shadow(T)$ obtained by replacing each node on the right branch of~$T$, whose subtrees~$T_1, \dots, T_p$ from left to right have~$n_1, \dots, n_p$ leaves, by the tuple~$(n_1, \dots, n_{p-1})$ (note that the number of leaves~$n_p$ of the rightmost child is omitted in this sequence).
%\begin{itemize}
%\item contracting all edges joining a child to a parent which does not lie on the right branch~of~$T$,
%\item replacing each node on the right branch of~$T$ by the tuple of the arities of its children except its rightmost.
%\end{itemize}
The \defn{shadow} of a cut~$c$ in~$T$ is the position~$\shadow(c)$ in~$\shadow(T)$ of the unique node of the right branch of~$T$ contained in~$c$.
For a sequence~$C = (c_1, \dots, c_k)$, define~$\shadow(C) \eqdef (\shadow(c_1), \dots, \shadow(c_k))$.
The \defn{shadow} of an $m$-painted $n$-tree~$\PT \eqdef (T, C, \mu)$ is the \mbox{$m$-lighted} $n$-shade~$\shadow(\PT) \eqdef (\shadow(S), \shadow(C), \mu)$.
\end{definition}

\begin{definition}
\label{def:shadowCongruence}
The \defn{shadow congruence} is the equivalence class on $m$-painted $n$-trees whose equivalence classes are the fibers of the shadow map. In other words, two $m$-painted $n$-trees are shadow congruent if they have the same shadow.
\end{definition}

\begin{proposition}
\label{prop:shadowPreposets}
For any $m$-painted $n$-tree~$\PT$ and any $m$-lighted $n$-shade~$\LS$, we have
\[
{\preccurlyeq_{\LS}} \subseteq {\preccurlyeq_{\PT}} \quad \iff \quad {\preccurlyeq_{\LS}} \subseteq {\preccurlyeq_{\shadow(\PT)}}.
\]
\end{proposition}

\begin{proof}
%Let~$\PT \eqdef (T, C, \mu)$ be an $m$-painted $n$-tree and~$\LS \eqdef \shadow(\PT) = (\shadow(S), \shadow(C), \mu)$.
Observe first that for a painted tree~$\PT \eqdef (T, C, \mu)$,
\begin{itemize}
\item the $i$th node~$n_i$ on the right branch of~$T$ corresponds to the $i$th tuple~$s_i$ of~$\shadow(T)$,
\item the label~$N(n_i)$ in \cref{def:preorderPaintedTree} coincides with the label~$N(s_i)$ in \cref{def:preorderLightedShade},
\item the inorder labels shifted by~$m$ of the descendants of $n_i$ which are not descendants of~$n_{i+1}$ precisely cover the range~$r(s_i)$ of~$s_i$.
\end{itemize}
We thus derive that~${\preccurlyeq_{\shadow(\PT)}} \subseteq {\preccurlyeq_{\PT}}$, which implies the backward direction.

For the forward direction, let~$\LS$ be such that~${\preccurlyeq_{\LS}} \subseteq {\preccurlyeq_{\PT}}$.
By \cref{def:preorderLightedShade}, for each entry~$x$ in a tuple of~$\LS$, the preceeding sum~$ps(x)$ is weakly larger in~$\preccurlyeq_{\LS}$ than all elements of~$]ps(x),m+n]$.
As~${\preccurlyeq_{\LS}} \subseteq {\preccurlyeq_{\PT}}$, this implies that, in the inorder labeling of~$\PT$ shifted by~$m$, the value~$p(x)$ appears inside a node on the right branch of~$\PT$.
Moreover, if~$x$ and~$y$ are two entries of the same tuple~$s$ of~$\LS$, then $ps(x) \equiv_{\LS} ps(y)$, so that~$p(x)$ and~$p(y)$ appear at the same node of~$\PT$.
We thus obtain that the set~$N(s)$ of any tuple~$s$ of~$\LS$ is a subset of a set~$N(t)$ of some tuple~$t$ of~$\shadow(\PT)$.
We conclude that~${{\preccurlyeq_{\LS}} \subseteq {\preccurlyeq_{\shadow(\PT)}}}$.
%Finally, according to the description of \cref{rem:coarseningLightedShades}, any coarsening of~$\shadow(\PT)$ to an $m$-lighted $n$-shade~$\LS$ would
%\begin{itemize}
%\item either concatenate two consecutive tuples~$s$ and~$s'$ of~$\shadow(\PT)$, resulting in the union of~$N(s)$ and~$N(s')$ in a larger equivalence class of~$\preccurlyeq_{\LS}$,
%\item or replace one of the integers~$x$ in a tuple of~$\shadow(\PT)$ by two integers~$y,z$ with~$x = y+z$, resulting in the addition of~$ps(y)$ to~$N(s)$ in~$\preccurlyeq_{\LS}$.
%\end{itemize}
%In both cases, the additional relations are not relations of~$\preccurlyeq_{\PT}$.
\end{proof}

We now study the behavior of the shadow map and shadow congruence of \cref{def:shadowMap,def:shadowCongruence} on the $m$-painted $n$-tree lattice, as illustrated in \cref{fig:multiplihedronFreehedronLabeledLattice13,fig:multiplihedronFreehedronLabeledLattice3,fig:multiplihedronFreehedronLabeledLattice4}.
In all these pictures, the cover relations between two $m$-painted $n$-trees in the same shadow class are represented by double~arrows.

Given two meet semilattices~$(M, \meet)$ and~$(M', \meet')$, a map~$f : M \to M'$ is a \defn{meet semilattice morphism} if~$f(x \meet y) = f(x) \meet' f(y)$ for all~$x,y \in M$.
The fibers of~$f$ are the classes of a \defn{meet semilattice congruence}~$\equiv_f$ on~$M$, and the image of~$f$ is then called the \defn{meet semilattice quotient} of~$M$ by~$\equiv_f$.
In other words, an equivalence relation~$\equiv$ on~$M$ is a meet semilattice congruence when~$x_1 \equiv x_2$ and~$y_1 \equiv y_2$ implies~$x_1 \meet y_1 \equiv x_2 \meet y_2$, and the quotient~$M / {\equiv}$ is the meet semilattice on the $\equiv$-equivalence classes, where for two $\equiv$-equivalence classes~$X$ and~$Y$, 
\begin{itemize}
\item the order relation is given by~$X \le Y$ if there exist representatives~$x \in X$ and~$y \in Y$ such that~$x \le y$, 
\item the meet~$X \meet Y$ is the $\equiv$-equivalence class of~$x \meet y$ for any representatives~$x \in X$ and~$y \in Y$.
\end{itemize}
The following classical criterion will be fundamental.
A proof of the similar criterion for lattice congruences can be found \eg in~\cite[Prop.~9-5.2]{Reading-PosetRegionsChapter}.
We adapt this proof here to meet semilattice congruences for the convenience of the reader.
Recall that~$X \subseteq M$ is \defn{order convex} if~$x \le y \le z$ and~$x,z \in X$ implies~$y \in X$.

\begin{proposition}
\label{prop:characterizationMeetSemilatticeCongruence}
An equivalence relation~$\equiv$ on a finite meet semilattice~$(M, \meet)$ is a meet semilattice congruence if and only if
\begin{enumerate}[(i)]
\item each $\equiv$-equivalence class is order convex and admits a unique minimal element,
\item the map~$\projDown : M \to M$ sending an element of~$M$ to the minimal element of its $\equiv$-equivalence class is order preserving.
\end{enumerate}
\end{proposition}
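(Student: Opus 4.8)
The plan is to establish the two directions of the stated equivalence, treating the ``only if'' direction as essentially a reformulation and concentrating the real work on the ``if'' direction.

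For the forward implication, suppose $\equiv$ is a meet semilattice congruence. First I would check that every $\equiv$-equivalence class $X$ has a unique minimal element. Given two minimal elements $x_1, x_2 \in X$, apply the congruence property to $x_1 \equiv x_2$ and $x_2 \equiv x_1$, obtaining $x_1 \meet x_2 \equiv x_2 \meet x_1$, hence $x_1 \meet x_2 \in X$; since $x_1 \meet x_2 \le x_1$ and $x_1 \meet x_2 \le x_2$, minimality of both $x_1$ and $x_2$ forces $x_1 = x_1 \meet x_2 = x_2$. Next I would verify that $\projDown$ is order preserving. If $x \le y$ in $M$, then $x \meet y = x$; writing $x' \eqdef \projDown(x)$ and $y' \eqdef \projDown(y)$, the congruence gives $x \meet y \equiv x' \meet y'$, so $x' \meet y'$ lies in the class of $x$ and, being below the minimal representative $x'$, must equal $x'$. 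Thus $x' = x' \meet y' \le y'$, which is exactly $\projDown(x) \le \projDown(y)$.

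For the converse, assume the two bulleted conditions hold and let $\min(X)$ denote the unique minimal element of a class $X$. The main task is to produce a well-defined meet on classes and verify it makes $\projDown$ a morphism. Given classes $X$ and $Y$ with arbitrary representatives $x \in X$ and $y \in Y$, I would show that the class of $x \meet y$ is independent of the choice of $x$ and $y$. The key computation is that $\projDown(x \meet y)$ depends only on $X$ and $Y$: since $x \meet y \le x$ and $\projDown$ is order preserving, $\projDown(x \meet y) \le \projDown(x) = \min(X)$, and symmetrically $\projDown(x \meet y) \le \min(Y)$, so $\projDown(x \meet y) \le \min(X) \meet \min(Y)$. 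Conversely $\min(X) \meet \min(Y) \le x \meet y$ gives $\projDown\big(\min(X) \meet \min(Y)\big) \le \projDown(x \meet y)$, and since $\min(X) \meet \min(Y)$ is a valid representative one finds the two projections coincide. This pins down $\projDown(x \meet y)$ as the class-level meet, independent of representatives, and simultaneously shows it equals $\projDown\big(\min(X) \meet \min(Y)\big)$.

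The remaining steps are routine verifications that I would carry out briefly: that the induced operation on classes satisfies the semilattice axioms (inheriting associativity, commutativity, and idempotency from $\meet$ via the representative-independence just established), and that the order $X \le Y$ defined through representatives agrees with the order induced by this meet. The main obstacle is the representative-independence of the meet on classes; once the sandwiching argument above is in place, everything else follows formally. I would also remark that the order-preserving hypothesis on $\projDown$ cannot be dropped, as the unique-minimal-element condition alone does not suffice to make the fiber relation a congruence.
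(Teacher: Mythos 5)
The paper gives no proof of this proposition: it is quoted as a classical criterion and used as a black box (to show that the shadow map is a meet semilattice morphism), so there is no in-paper argument to compare with, and your proposal must be judged on its own. Your route is the standard one and is essentially correct: in the forward direction, uniqueness of minimal elements and order preservation of~$\projDown$ both follow from the congruence property applied to suitable pairs; in the converse direction, the sandwich $\projDown(\min(X) \meet \min(Y)) \le \projDown(x \meet y) \le \min(X) \meet \min(Y)$ pins down $\projDown(x \meet y)$ independently of the representatives $x \in X$, $y \in Y$, which is exactly the congruence property. (The further verifications you announce about the quotient being a semilattice are not needed for the stated equivalence.)

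Three points to repair or make explicit. First, in the uniqueness argument, applying the congruence to $x_1 \equiv x_2$ and $x_2 \equiv x_1$ yields $x_1 \meet x_2 \equiv x_2 \meet x_1$, which is a tautology since $\meet$ is commutative; the conclusion $x_1 \meet x_2 \in X$ does not follow as written. Apply it instead to $x_1 \equiv x_2$ and $x_2 \equiv x_2$, obtaining $x_1 \meet x_2 \equiv x_2 \meet x_2 = x_2$, hence $x_1 \meet x_2 \in X$; your minimality argument then finishes the job. Second, in the converse, the phrase ``one finds the two projections coincide'' hides the one step that actually needs saying: from $\projDown(x \meet y) \le \min(X) \meet \min(Y)$ you should apply $\projDown$ once more and use its order preservation together with its idempotence to get $\projDown(x \meet y) = \projDown\big(\projDown(x \meet y)\big) \le \projDown\big(\min(X) \meet \min(Y)\big)$, which combined with your reverse inequality gives the desired equality. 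Third, both directions silently use finiteness (or a descending chain condition): minimal elements of classes must exist, and a unique minimal element of a class must in fact be a minimum of that class --- you use the latter when writing $\min(X) \meet \min(Y) \le x \meet y$. This is harmless in the paper's finite setting, but the statement is false for general meet semilattices (the total relation on the chain $\Z$ with $\meet = \min$ is a congruence whose single class has no minimal element), so one sentence fixing the setting would make the proof airtight.
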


\begin{proof}
Assume first that~$\equiv$ is a meet semilattice congruence. Then
\begin{enumerate}[(i)]
\item Let~$X$ be a $\equiv$-equivalence class.
If~$x \le y \le z$ and~$x,z \in X$, then~$x = x \meet y \equiv z \meet y = y$ and thus~$X$ is order convex.
Since~$(M,\meet)$ is a finite meet semilattice, $X$ has at least one minimal element.
Assume for contradiction that it has two minimal elements~$x$ and~$y$.
Since~$x \equiv y$, we have~$x \meet y \equiv x \meet x = x$ so that~$x \le y$.
By symmetry, we obtain that~$x = y$.
\item Consider now~$x \le y$ in~$M$.
As~$\projDown(x) \equiv x$ and~$\projDown(y) \equiv y$, we have~$x = x \meet y \equiv \projDown(x) \meet \projDown(y)$.
As~$\projDown(x)$ is minimal in the class of~$x$, we thus obtain that~$\projDown(x) \le \projDown(x) \meet \projDown(y) \le \projDown(y)$.
\end{enumerate}

Conversely, assume that~$\equiv$ is an equivalence relation satisfying~(i) and~(ii).
For any~$x, y \in M$, we have~$\projDown(x) \le x$ and~$\projDown(y) \le y$ so that~$\projDown(x) \meet \projDown(y) \le x \meet y$, hence~$\projDown \big( \projDown(x) \meet \projDown(y) \big) \le \projDown(x \meet y)$.
Moreover, as~$x \meet y \le x$ and~$x \meet y \le y$, we have~$\projDown(x \meet y) \le \projDown(x)$ and~$\projDown(x \meet y) \le \projDown(y)$, so that~$\projDown(x \meet y) \le \projDown(x) \meet \projDown(y)$.
Combining these two inequalities, we obtain that
\[
\projDown\big( \projDown(x) \meet \projDown(y) \big) \le \projDown(x \meet y) \le \projDown(x) \meet \projDown(y).
\]
Since~$\projDown\big( \projDown(x) \meet \projDown(y) \big) \equiv \projDown(x) \meet \projDown(y)$, we obtain that~$x \meet y \equiv \projDown(x \meet y) \equiv \projDown(x) \meet \projDown(y)$ by order convexity.
Now for~$x_1,x_2,y_1,y_2 \in M$, if~$x_1 \equiv x_2$ and~$y_1 \equiv y_2$, we have~$\projDown(x_1) = \projDown(x_2)$ and~$\projDown(y_1) = \projDown(y_2)$ so that~$x_1 \meet y_1 \equiv \projDown(x_1) \meet \projDown(y_1) = \projDown(x_2) \meet \projDown(y_2) \equiv x_2 \meet y_2$.
\end{proof}

\enlargethispage{.3cm}
%Join semilattice morphisms, congruences and quotients are defined dually, and the dual characterization to \cref{prop:characterizationMeetSemilatticeCongruence} holds.
We will now apply this characterization to the shadow congruence on the binary $m$-painted $n$-tree rotation lattice.
This will prove along the way that the binary $m$-painted $n$-tree rotation poset is a meet semilattice quotient, hence a meetsemilattice, hence a lattice as it is bounded.
This completes the proof of \cref{prop:lightedShadeRotationLattice}.

\begin{proposition}
\label{prop:shadowMapSemilatticeMapRotation}
The shadow map is a surjective meet semilattice morphism from the binary \mbox{$m$-painted} $n$-tree rotation lattice to the unary $m$-lighted $n$-shade rotation lattice.
\end{proposition}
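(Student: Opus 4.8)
The plan is to apply the criterion of \cref{prop:characterizationMeetSemilatticeCongruence} to the shadow congruence on the binary $m$-painted $n$-tree rotation lattice $M$ (which is a lattice, hence a meet semilattice, by the previous results), and then to identify the resulting quotient with the unary $m$-lighted $n$-shade rotation poset. Surjectivity is the easy part: given a unary $m$-lighted $n$-shade, one reads off the arities along the right branch and the positions of the cuts relative to it, and reconstructs a binary $m$-painted $n$-tree with that shadow (for instance the canonical minimal representative described below), so every shade is a shadow.

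First I would describe the shadow fibers and their minimal elements explicitly. Since $\shadow$ only records, for each node of the right branch, the arity of its collapsed left subtree together with the right-branch position of each cut, two binary painted trees are shadow congruent exactly when they share this right-branch data, the internal structure of the left subtrees (including how the cuts traverse them) being otherwise free. Within such a fiber, the only right rotations that stay inside the fiber are those performed strictly inside a left subtree, which are ordinary Tamari rotations; rotations that move the right branch, sweep a cut, or exchange cut labels necessarily change the shadow. I would therefore define the canonical representative to be the tree whose every left subtree is the minimal (comb-shaped) tree of its size, and argue by a standard confluence/local-rewriting argument on these inner Tamari rotations that it is the unique minimal element of its fiber.

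The main work, and the expected obstacle, is the second bullet of \cref{prop:characterizationMeetSemilatticeCongruence}, namely that $\projDown$ is order preserving. I would reduce this to cover relations and treat the three types of right rotations $\PT \lessdot \PT'$ of binary painted trees in turn. When the rotation takes place inside a left subtree it preserves the shadow, so $\projDown(\PT)$ and $\projDown(\PT')$ lie in the same fiber and coincide; here one only needs that the inner Tamari projection to combs is order preserving. When the rotation changes the shadow, I would check that $\shadow(\PT) \lessdot \shadow(\PT')$ is precisely one of the three types of unary lighted shade rotations of \cref{rem:rotationLightedShades} (splitting a singleton, sweeping a cut, exchanging cut labels), and that this cover lifts to a cover $\projDown(\PT) \lessdot \projDown(\PT')$ between the canonical representatives. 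The delicate point is to see that rebuilding the left subtrees as combs on both sides is compatible with the rotation, so that no new inversion is created in $\preccurlyeq_{\PT}$; this case analysis is where the argument is genuinely technical.

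Finally I would assemble the pieces. The two bullets give that the shadow congruence is a meet semilattice congruence, so the quotient $M / {\equiv_{\shadow}}$ is a meet semilattice and the quotient map is a meet semilattice morphism. The correspondence of rotation types established above, together with surjectivity, identifies this quotient poset with the unary $m$-lighted $n$-shade rotation poset and shows that $\shadow$ factors as this identification composed with the quotient map. Hence $\shadow$ is a surjective meet semilattice morphism onto the unary $m$-lighted $n$-shade rotation lattice, which in particular proves that the latter is a meet semilattice, and being bounded, a lattice, completing the proof of \cref{prop:lightedShadeRotationLattice}.
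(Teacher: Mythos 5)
Your proof follows the same skeleton as the paper's: apply \cref{prop:characterizationMeetSemilatticeCongruence} to the shadow congruence, identify the minimal element of each fiber as a comb-based tree, and check that $\projDown$ is order preserving by a case analysis on the types of rotations. But your description of the fibers, on which everything rests, is wrong in a way that matters. It is not true that the only shadow-preserving rotations are Tamari rotations inside left subtrees: sweeping a binary node that does \emph{not} lie on the right branch by a cut also preserves the shadow (neither the left-subtree arities nor the crossing points of the cuts along the right branch change), so inside a fiber the cuts slide freely through the left subtrees, and a confluence argument on ``inner Tamari rotations'' alone cannot identify the minimal element. The correct minimal representative replaces each entry $x$ of the shade by a left comb on $x$ leaves \emph{with all the relevant cuts placed at the level of its leaves}, and this cut placement is precisely the crucial point, which the paper emphasizes explicitly: in the case of a rotation of an edge $i \to j$ with $j$ on the right branch, passing from $\projDown(\PT)$ to $\projDown(\PT')$ requires rotating edges inside the comb at $j$, and such an edge can be rotated only if it joins two binary nodes, i.e., only if no cut subdivides the interior of the comb by unary nodes. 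A proof that leaves the cut positions in the canonical representative unspecified cannot carry out this step; indeed this very step is what breaks for the upward projection $\projUp$, which is why the shadow map fails to be a join semilattice morphism, so no argument blind to the cut placement can be correct.

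A second, more local error: a shadow-changing cover $\PT \lessdot \PT'$ does \emph{not} lift to a cover $\projDown(\PT) \lessdot \projDown(\PT')$. In the edge-rotation case, the comb at $j$ in $\projDown(\PT)$ must be split into two combs (one at $i$, one at $j$) in $\projDown(\PT')$, which takes a whole sequence of rotations in general; in the sweep case, each node of the comb must be swept through the cut one at a time. Only the inequality $\projDown(\PT) < \projDown(\PT')$ holds, witnessed by these explicit rotation sequences --- and fortunately that is all that order preservation requires, so this slip is repairable once the first point is fixed.
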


\begin{proof}
%We first describe the fibers of the shadow map from binary $m$-painted $n$-trees to unary $m$-lighted $n$-shades.
The shadow fiber of a unary $m$-lighted $n$-shade~$\LS \eqdef (S, C, \mu)$ is the set of all binary \mbox{$m$-painted} $n$-trees obtained by replacing each entry~$s$ of~$S$ by a binary tree on $s$ leaves cut by all lines of~$C$ below~$s$.
In other words, this fiber is the Cartesian product of the $m_s$-painted $s$-tree right rotation digraphs for all entries~$s$ of~$S$ where~$m_s$ is the number of lines of~$C$ below~$s$.
It is thus clearly an interval, whose minimal element is obtained by replacing each entry~$s$ of~$S$ by a left comb on~$s$ leaves, cut at the level of its leaves by all lines of~$C$ below~$s$.
Hence, the minimal element~$\projDown(\PT)$ in the shadow class of a binary $m$-painted $n$-tree~$\PT \eqdef (T, C, \mu)$ is obtained by replacing each left subtree of~$T$ by a comb, cut by the lines of~$C$ at the level of its leaves.
See \cref{fig:multiplihedronFreehedronLabeledLattice13,fig:multiplihedronFreehedronLabeledLattice3,fig:multiplihedronFreehedronLabeledLattice4} for illustrations.
%In other words, the unique minimal element~$\projDown(\PT)$ in the shadow class of an $m$-painted $n$-tree~$\PT \eqdef (T, C, \mu)$ is obtained by replacing each left subtree of~$T$ by a comb, cut by the lines of~$C$ at the level of its leaves.

Consider now two binary $m$-painted $n$-trees~$\PT \eqdef (T, C, \mu)$ and~$\PT' \eqdef (T', C', \mu')$ connected by a right rotation.
If this rotation does not affect the right branch of~$\PT$, then~$\PT$ and~$\PT'$ are shadow congruent, so that~$\projDown(\PT) = \projDown(\PT')$.
Assume now that this rotation affects the right branch.
There are three possible such flips:
\begin{enumerate}[(i)]
\item Assume first that we rotate an edge~$i \to j$ in~$\PT$ (with~$j$ on the right branch of~$\PT$) to an edge~$i \leftarrow j$ in~$\PT'$ (with both~$i$ and~$j$ on the right branch of~$\PT'$). Then~$\projDown(\PT)$ and~$\projDown(\PT')$ coincide except that~$\projDown(\PT)$ has a left comb at~$j$ (cut at the level of its leaves by all lines of~$C$ below~$j$) while~$\projDown(\PT')$ has a left comb at~$i$ and a left comb at~$j$ (both cut at the level of their leaves by all lines of~$C$ below~$j$). As the left comb is the rotation minimal binary tree, we can perform a sequence of right rotations in~$\projDown(\PT)$ to obtain~$\projDown(\PT')$. Note here that it is crucial that the cuts appear in the left combs of~$\projDown(\PT)$ and~$\projDown(\PT')$ at the level of their leaves so that these binary tree rotations are indeed painted tree rotations.
\item Assume now that we move a cut~$c$ past a binary node~$i$ (on the right branch) to pass from~$\PT$ to~$\PT'$. Then~$\projDown(\PT)$ and~$\projDown(\PT')$ coincide except that the left comb at node~$i$ of~$\projDown(\PT)$ is completely above~$c$ while the left comb at node~$i$ of~$\projDown(\PT')$ is completely below~$c$. Hence, we can successively move the cut~$c$ past each node of the left comb at node~$i$ of~$\projDown(\PT)$ to obtain~$\projDown(\PT')$.
\item Assume finally that we exchange the labels of two consecutive cuts with no node in between them to pass from~$\PT$ to~$\PT'$. Then we can exchange the labels of the same cuts to pass from~$\projDown(\PT)$ to~$\projDown(\PT')$, since they are still consecutive and still have no node between them.
\end{enumerate}
In all cases, we obtain that~$\projDown(\PT) < \projDown(\PT')$.
We conclude that~$\projDown$ is order preserving, so that the shadow map is a meet semilattice morphism by \cref{prop:characterizationMeetSemilatticeCongruence}.
\end{proof}

\begin{example}
When~$m = 0$, we obtain an unusual meet semilattice morphism from the Tamari lattice to the boolean lattice (distinct from the usual lattice morphism given by the canopy map).
When~$m = 1$, we obtain a meet semilattice morphism from the multiplihedron lattice to the Hochschild lattice, reminiscent of~\cite{Poliakova}.
\end{example}

\begin{remark}
Note that the shadow map is not a join semilattice morphism.
For instance,
\[
\includegraphics[scale=.9]{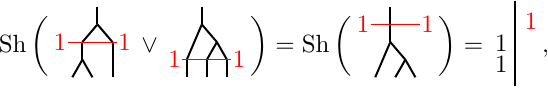}
%\shadow \bigg( \hspace{-.4cm} \raisebox{.8cm}{\paintedTree[1]{[[1[[][]]][1[]]]}{1}} \hspace{-.4cm} \join \hspace{-.4cm} \raisebox{.8cm}{\paintedTree[1]{[[1[]][[1[]][1[]]]]}{1}} \hspace{-.4cm} \bigg) = \shadow \bigg( \hspace{-.4cm} \raisebox{.8cm}{\paintedTree[1]{[1[[][[][]]]]}{1}} \hspace{-.4cm} \bigg) = \hspace{-.4cm} \raisebox{1cm}{\lightedShade[1]{[, tier=1 [1 [1 []]]]}{1}} \hspace{-.4cm},
\]
while
\[
\includegraphics[scale=.9]{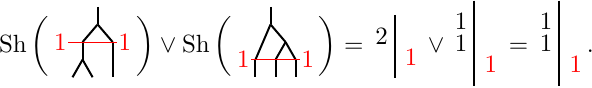}
%\shadow \bigg( \hspace{-.4cm} \raisebox{.8cm}{\paintedTree[1]{[[1[[][]]][1[]]]}{1}} \hspace{-.4cm} \bigg) \join \shadow \bigg( \hspace{-.4cm} \raisebox{.8cm}{\paintedTree[1]{[[1[]][[1[]][1[]]]]}{1}} \hspace{-.4cm} \bigg) = \hspace{-.4cm} \raisebox{.8cm}{\lightedShade[1]{[2 [, tier=1 []]]}{1}} \hspace{-.4cm} \join \hspace{-.4cm} \raisebox{1cm}{\lightedShade[1]{[1 [1 [, tier=1 []]]]}{1}} \hspace{-.4cm} = \hspace{-.4cm} \raisebox{1cm}{\lightedShade[1]{[1 [1 [, tier=1 []]]]}{1}} \hspace{-.4cm}.
\]
Note that there is already a counter-example with~$(m,n) = (0,3)$, see \cref{fig:multiplihedronFreehedronLabeledLattice3}\,(left).

If we tried to apply the (dual) characterization of \cref{prop:characterizationMeetSemilatticeCongruence}, we would observe that, even if the fiber of a unary $m$-lighted $n$-shade~$\LS \eqdef (S, C, \mu)$ has a unique maximal element (obtained by replacing each element~$x$ of~$S$ by a right comb on~$x$ leaves, cut at the level of its root by all lines of~$C$ below~$x$), the projection map~$\projUp$ is not order preserving.
\end{remark}

\begin{remark}
Note that we could also consider the left shadow map, given by the arity sequence on the left branch of the $m$-painted $n$-tree.
It defines a join semilattice morphism, which is not a meet semilattice morphism.
It would also be interesting to consider the map that records the arity sequence along the path from the root to the $i$th leaf.
And of course all intersections of the equivalence relations arising from these arity sequence maps.
\end{remark}

\begin{remark}
Note that it is crucial here that our orientation of the skeleton of the $(m,n)$-multiplihe\-dron gives advantage to the permutation part over the binary tree part (in other words, that we consider the shuffle of the $m$-permutahedron with the $n$-associahedron).
Indeed, as observed in the proof of \cref{prop:shadowMapSemilatticeMapRotation}, we need that the cuts appear at the level of the leaves of the left combs in~$\projDown(\PT)$.
Had we considered instead the shuffle of the $n$-associahedron with the $m$-permutahedron (or equivalently, the shuffle of the $m$-permutahedron with the anti-$n$-associahedron), the (left or right) shadow map would be neither a join nor a meet semilattice morphism.
\end{remark}

%\begin{remark}
%\label{rem:shadowMapSemilatticeMapRefinement}
%When~$m \le 1$, the shadow map is a surjective meet semilattice morphism from the \mbox{$m$-painted} $n$-tree refinement meet semilattice to the $m$-lighted $n$-shade refinement meet semilattice.
%Indeed, the minimal element~$\projDown(\PT)$ in the shadow class of an $m$-painted $n$-tree~$\PT$ is obtained by contracting all edges between two nodes that are not on the right branch of~$\PT$.
%It fails when~$m \ge 2$ as edges between two consecutive cuts cannot be contracted.
%\end{remark}

%%%%%%%%%

\subsection{Singletons}
\label{subsec:singletons}

We now study the fibers of the shadow map which consist in a single $m$-painted $n$-tree.
They are analoguous to the classical singleton permutations used to construct associahedra, see \cref{rem:similarities}.

\begin{definition}
An \defn{$(m,n)$-singleton} is a binary $m$-painted $n$-tree which is alone in its shadow congruence class.
\end{definition}

\begin{proposition}
\label{prop:singletons}
The following conditions are equivalent for a binary $m$-painted $n$-tree~$\PT$:
\begin{enumerate}[(i)]
\item $\PT$ is an $(m,n)$-singleton,
\item each binary node of~$\PT$ lies on the right branch, or its parent lies on the right branch if it is below the last cut,
\item each tuple of the shadow~$\shadow(\PT)$ is reduced to a single~$1$, or either to a single~$1$ or a single~$2$ if it is below the last cut.
\end{enumerate}
\end{proposition}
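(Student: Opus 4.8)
The plan is to prove the two equivalences (ii)$\iff$(iii) and (i)$\iff$(iii) separately, both resting on a common structural observation. For a \emph{binary} $m$-painted $n$-tree~$\PT$, each node of the right branch is either a cut node (unary, contributing an empty tuple to $\shadow(\PT)$) or a binary node carrying exactly one left subtree; in the latter case the corresponding entry of $\shadow(\PT)$ is the single integer $a_v$ recording the arity (number of leaves) of the left subtree $L_v$ hanging at the right-branch node $v$. Thus every tuple of $\shadow(\PT)$ is either empty or a singleton, and a singleton tuple is below the last cut precisely when $d_v = 0$, where $d_v$ denotes the number of cuts lying below $v$ on the right branch.

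First I would dispatch (ii)$\iff$(iii) by a case analysis on $(a_v, d_v)$ at each left subtree. When $a_v = 1$, the subtree $L_v$ is a stack of cut nodes above a single leaf and contains no binary node, so it imposes no condition in (ii) and yields the tuple $(1)$ allowed unconditionally in (iii). When $a_v = 2$ and $d_v = 0$, the subtree is a single binary node whose parent $v$ lies on the right branch and which lies below the last cut, matching the tuple $(2)$ below the last cut in (iii). When $a_v = 2$ and $d_v \ge 1$, every realization of $L_v$ forces either a binary node whose parent is a cut node, or a binary node lying above a cut, violating (ii); correspondingly the tuple $(2)$ is not below the last cut, violating (iii). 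Finally $a_v \ge 3$ forces a binary node whose parent lies inside $L_v$ (hence off the right branch), violating (ii), while producing a tuple of arity $\ge 3$, violating (iii). Matching these four cases establishes (ii)$\iff$(iii).

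For (i)$\iff$(iii), the central step is a \emph{factorization of the shadow fiber}. Since the right-branch skeleton of any tree in the fiber of a unary $m$-lighted $n$-shade $\LS$ is prescribed by $\LS$ (the sequence of binary and cut nodes, the arities $a_v$, the cut positions, and the labels $\mu$), reconstructing a tree in the fiber amounts to choosing, independently at each binary right-branch node $v$, a binary painted subtree $L_v$ of arity $a_v$ through which the $d_v$ cuts below $v$ pass as nested antichains. As the restriction of each global cut to the various $L_v$ may be chosen independently, the fiber is in bijection with a product over $v$ of the numbers $N(a_v, d_v)$, where $N(a,d)$ counts binary painted subtrees of arity $a$ crossed by $d$ nested cuts. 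I would then determine when $N(a,d) = 1$: for $a = 1$ the subtree is the unique stack of $d$ cut nodes above a leaf, so $N(1,d) = 1$; for $a = 2$ with $d = 0$ it is the unique single binary node, so $N(2,0) = 1$; whereas for $a = 2$ with $d \ge 1$ the lone binary node may sit above or below a cut, and for $a \ge 3$ already the Catalan count $C_{a-1} \ge 2$ of underlying shapes gives two subtrees. Hence $N(a,d) = 1$ exactly when $a = 1$ or $(a = 2$ and $d = 0)$, so the fiber is a singleton iff every tuple of $\shadow(\PT)$ satisfies (iii), proving (i)$\iff$(iii).

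The main obstacle I anticipate is the clean justification of the fiber factorization: one must check that a global cut of a tree in the fiber is determined by its fixed node on the right branch together with its independently chosen restrictions to the left subtrees, and that any such independent choice reassembles into a genuine cut (one node per root-to-leaf path) of a valid binary painted tree with the prescribed shadow. Once this independence is established, the evaluation of $N(a,d)$ is routine, and the two distinct subtrees exhibited in the non-unique cases are moreover related to $\PT$ by a rotation or a sweep internal to $L_v$ (hence shadow-preserving), which re-derives the failure of (i) directly from the rotation moves of \cref{rem:rotationLightedShades}.
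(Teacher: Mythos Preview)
Your proof is correct and complete, but it takes a genuinely different route from the paper's. The paper argues (i)$\iff$(ii) directly by local moves: given a binary node~$i$ off the right branch, it cases on the parent~$j$ and, whenever (ii) fails, exhibits a single shadow-preserving rotation or sweep producing a second tree in the fiber; it then notes that (ii) translates to (iii) via the shadow map. Your argument instead establishes (ii)$\iff$(iii) by the structural case analysis on $(a_v,d_v)$, and then proves (i)$\iff$(iii) globally by factoring the entire fiber as $\prod_v N(a_v,d_v)$ and computing when each factor equals~$1$.

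The paper's approach is shorter: one never needs to verify the independence of the choices of~$L_v$ across right-branch nodes, nor to count anything. Your approach, on the other hand, yields strictly more: the explicit product formula for the fiber cardinality (not just whether it equals~$1$), and a cleaner justification of the converse direction (ii)$\Rightarrow$(i), which in the paper is left somewhat implicit (the paper only checks that the one rotation available in the allowed case changes the shadow, without arguing that \emph{every} shadow-preserving move is excluded). Your anticipated obstacle---the reassembly of independently chosen cut-restrictions into a global nested sequence of cuts---is real but routine, since nestedness of cuts is a per-path condition and the right-branch data is fixed by~$\LS$.
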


\begin{proof}
Assume that~$\PT$ has a binary node~$i$ which is not on the right branch, and let~$j$ be the parent of~$i$.
If~$j$ is a unary node contained in a cut~$c$, then moving~$c$ past $i$ preserves the shadow of~$\PT$.
If~$j$ is a binary node not on the right branch, then rotating the edge~$i \to j$ preserves the shadow of~$\PT$.
If~$j$ is on the right branch but above a cut~$c$, then moving~$c$ past a node in the left branch of~$j$ preserves the shadow of~$\PT$.
Finally, if~$j$ is on the right branch and below all cuts, then the only possible rotation in the left branch of~$j$ modifies the shadow of~$\PT$.
This proves that~(i)~$\iff$~(ii).
Finally, (ii) clearly translates to~(iii) via the shadow map.
\end{proof}

\newcommand{\Fib}[1]{\textsf{Fib}_{#1}} % Fibonacci number
\begin{corollary}
The number of singletons of the $(m,n)$-shadow map is
\[
m! \sum_{k = 0}^{n} \binom{m+k-1}{k} \Fib{n-k+1} ,
\]
where~$\Fib{k}$ denote the $k$th Fibonacci number (defined by~$\Fib{0} = \Fib{1} = 1$ and $\Fib{k+2} = \Fib{k+1} + \Fib{k}$ for~$k \ge 0$, see \OEIS{A000045}).
See \cref{table:singletons} in \cref{subsec:tableSingletons} for the first few numbers.
\end{corollary}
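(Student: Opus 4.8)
The plan is to count the shadows of $(m,n)$-singletons rather than the singletons themselves. By definition an $(m,n)$-singleton is the unique binary $m$-painted $n$-tree in its shadow fiber, so the shadow map restricts to a bijection from the set of $(m,n)$-singletons onto the set of unary $m$-lighted $n$-shades satisfying condition~(iii) of \cref{prop:singletons}: every tuple equals the singleton~$(1)$, except that the tuples lying below the last (that is, lowest) cut are allowed to equal~$(2)$ instead. I would therefore reduce the corollary to enumerating exactly these shades.

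First I would strip off the labels. A unary $m$-lighted $n$-shade has exactly $m$ cuts, each carrying a single label, so its ordered partition is just a linear order of~$[m]$ read along the cut positions; since condition~(iii) restricts only the tuple \emph{values} and never the labels, each unlabeled shape lifts to exactly~$m!$ labeled shades, exactly as in the proof of \cref{prop:numberUnaryLightedShades}. It thus suffices to count the unlabeled shapes and multiply by~$m!$.

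The heart of the argument is to split such a shape along its lowest cut. Weakly above this cut sit the remaining~$m-1$ cuts interleaved with some number~$k$ of singletons, each forced by condition~(iii) to be~$(1)$; the possible interleavings are counted by the placements of~$k$ indistinguishable singletons among the~$m-1$ cuts, that is, by~$\binom{m+k-1}{k}$, and they contribute exactly~$k$ to the total sum~$n$. Strictly below the lowest cut sits a sequence of singletons each equal to~$(1)$ or~$(2)$, in other words a composition of the remaining sum~$n-k$ into parts~$1$ and~$2$; deleting its leading part shows that such compositions obey the recursion $\Fib{j+2}=\Fib{j+1}+\Fib{j}$, so they are enumerated by the corresponding Fibonacci number, which is the factor~$\Fib{n-k+1}$ in the statement. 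Multiplying the two counts and summing over~$0\le k\le n$ then produces~$m!\sum_{k=0}^{n}\binom{m+k-1}{k}\Fib{n-k+1}$.

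The step I expect to require the most care is making this split into a genuine bijection and handling the degenerate case~$m=0$, where there is no cut at all: then there is no last cut, condition~(iii) frees every tuple to be~$(1)$ or~$(2)$, the whole shade becomes a single composition of~$n$ into parts~$1$ and~$2$, and the sum must collapse to its~$k=0$ term, since~$\binom{m+k-1}{k}$ vanishes for~$k\ge 1$ when~$m=0$. I would double-check that the lowest cut, the upper interleaving, and the lower composition reconstruct the shade uniquely, and cross-validate the whole count against the generating function $\frac{1}{(1-y)^{m}}\cdot\frac{1}{1-y-y^{2}}$, whose coefficient of~$y^{n}$ is precisely the binomial--Fibonacci convolution displayed above. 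The remaining bookkeeping, namely tracking which singletons are forced to value~$1$ and which are free to be~$1$ or~$2$, is routine.
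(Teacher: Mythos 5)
Your proposal is correct and takes essentially the same approach as the paper's own proof: count shadows of singletons via \cref{prop:singletons}\,(iii), factor out $m!$ for the ordering of the cut labels, and split at the last cut into the $\binom{m+k-1}{k}$ interleavings of $k$ unit tuples with the remaining cuts above it times the number of $\{1,2\}$-compositions of $n-k$ below it (your bijection check, $m=0$ degenerate case, and generating-function cross-check are just refinements of the same argument). One small remark: your identification of the composition count with $\Fib{n-k+1}$ matches the paper's proof verbatim, but both are consistent with the standard Fibonacci indexing $\Fib{1}=\Fib{2}=1$ rather than with the convention $\Fib{0}=\Fib{1}=1$ stated in the corollary (compare the $m\le 1$ columns of \cref{table:singletons}), an off-by-one quirk of the statement itself rather than of your argument.
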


\begin{proof}
To count the number of singletons, we can simply count the number of shadows of singletons.
From their description in \cref{prop:singletons}\,(iii), we obtain that the number of such shades with $k$ entries above the last cut is given by~$m! \binom{m+k-1}{k} \Fib{n-k+1}$.
Namely, choose the order of the cuts (hence $m!$ choices), insert $k$ tuples reduced to a single~$1$ before the last cut (hence $\binom{m+k-1}{k}$ choices), and finish with a sequence of tuples reduced to a single~$1$ or a single~$2$, whose total sum is~$n-k$ (hence $\Fib{n-k+1}$ choices).
\end{proof}

\begin{example}
When~$m = 0$, the number of singletons is the Fibonacci~$\Fib{n}$ (\OEIS{A000045}).
When~$m = 1$, the number of singletons is~$\Fib{n+2}-1$ (\OEIS{A000071}).
\end{example}

%%%%%%%%%%%%%%%%%%%%%%%%%%%%%%%%%%%%%%

\section{Multiplihedra and Hochschild polytopes}
\label{sec:polytopes}

In this section, we construct polyhedral fans and polytopes whose face lattices are isomorphic to the coarsening lattices on $m$-painted $n$-trees and $m$-lighted $n$-shades respectively.
We start with a brief recollection on polyhedral geometry (\cref{subsec:polyhedralGeometry}).
We then present the vertex and facet of the $(m,n)$-multiplihedron (\cref{subsec:multiplihedra}) and of the $(m,n)$-Hochschild polytope (\cref{subsec:freehedra}).
We conclude by gathering all necessary proofs on Hochschild polytopes (\cref{subsec:proofs}).

%%%%%%%%%

\subsection{Recollection on polyhedral geometry}
\label{subsec:polyhedralGeometry}

We start with a brief reminder on fans and polytopes, with a particular attention to deformed permutahedra.
We invite the reader familiar with these notions to jump directly to \cref{subsec:multiplihedra}.

%%%

\subsubsection{Fans and polytopes}
\label{subsec:fansPolytopes}

A (polyhedral) \defn{cone} is the positive span~$\R_{\ge0}\b{R}$ of a finite set~$\b{R}$ of vectors of~$\R^d$ or equivalently, the intersection of finitely many closed linear half-spaces of~$\R^d.$ 
The \defn{faces} of a cone are its intersections with its supporting hyperplanes. 
The \defn{rays} (resp.~\defn{facets}) are the faces of dimension~$1$ (resp.~ codimension~$1$).
A cone is \defn{simplicial} if its rays are linearly independent.
A (polyhedral) \defn{fan}~$\c{F}$ is a set of cones such that any face of a cone of~$\c{F}$ belongs to~$\c{F}$, and any two cones of~$\c{F}$ intersect along a face of both. 
A fan is \defn{essential} if the intersection of its cones is the origin, \defn{complete} if the union of its cones covers~$\R^d$, and \defn{simplicial} if all its cones are simplicial.

A \defn{polytope} is the convex hull of finitely many points of~$\R^d$ or equivalently, a bounded intersection of finitely many closed affine half-spaces of~$\R^d$.
The \defn{faces} of a polytope are its intersections with its supporting hyperplanes.
The \defn{vertices} (resp.~\defn{edges}, resp.~\defn{facets}) are the faces of dimension~$0$ (resp.~dimension~$1$, resp.~codimension~$1$).

The \defn{normal cone} of a face~$\polytope{F}$ of a polytope~$\polytope{P}$ is the cone generated by the normal vectors to the supporting hyperplanes of~$\polytope{P}$ containing~$\polytope{F}$.
Said differently, it is the cone of vectors~$\b{c}$ of~$\R^d$ such that the linear form~$\b{x} \mapsto \dotprod{\b{c}}{\b{x}}$ on~$\polytope{P}$ is maximized by all points of the face~$\polytope{F}$.
The \defn{normal fan} of~$\polytope{P}$ is the set of normal cones of all its faces.

The \defn{Minkowski sum} of two polytopes~$\polytope{P}, \polytope{Q} \subseteq \R^n$ is the polytope~$\polytope{P} + \polytope{Q} \eqdef \set{p+q}{p \in \polytope{P}, q \in \polytope{Q}}$.
The normal fan of~$\polytope{P} + \polytope{Q}$ is the common refinement of the normal fans of~$\polytope{P}$ and~$\polytope{Q}$.
We write~${\polytope{P} = \polytope{Q} - \polytope{R}}$ when~$\polytope{P} + \polytope{R} = \polytope{Q}$.

A \defn{deformation} of a full-dimensional polytope~$\polytope{P}$ is a polytope~$\polytope{Q}$ satisfying the following equivalent conditions:
\begin{itemize}
\item the normal fan of~$\polytope{Q}$ coarsens the normal fan of~$\polytope{P}$,
\item $\polytope{Q}$ is a \defn{weak Minkowski summand} of~$\polytope{P}$, \ie there exists a polytope~$\polytope{R}$ and a positive real number~$\lambda$ such that~$\lambda \polytope{P} = \polytope{Q} + \polytope{R}$
\item $\polytope{Q}$ can be obtained from~$\polytope{P}$ by gliding each facet in the direction of its normal vector without passing a vertex.
\end{itemize}

%%%

\subsubsection{The braid fan, the permutahedron, and its deformations}
\label{subsec:permutahedron}

We denote by~$(\b{e}_i)_{i \in [d]}$ the canonical basis of~$\R^d$ and we define~$\one_X \eqdef \sum_{i \in X} \b{e}_i$ for~$X \subseteq [d]$, and~$\one \eqdef \one_{[d]}$.
All our polytopal constructions will lie in the affine subspace~$\HH_d \eqdef \bigset{\b{x} \in \R^d}{\dotprod{\one}{\b{x}} = \sum_{i \in [d]} x_i = \binom{d+1}{2}}$, and their normal fans will lie in the vector subspace~$\one^\perp \eqdef \set{\b{x} \in \R^d}{\dotprod{\one}{\b{x}} = 0}$.

The \defn{braid arrangement} is the arrangement formed by the hyperplanes~$\set{\b{x} \in \one^\perp}{x_i = x_j}$ for all~${1 \le i < j \le d}$.
Its regions (\ie the closures of the connected components of the complement of the union of its hyperplanes) are the maximal cones of the \defn{braid fan}~$\c{B}_d$.
This fan has a \mbox{$k$-dimen}\-sional cone for each ordered partition of~$[d]$ into~$k+1$ parts.
In particular, it has a region for each permutation of~$[d]$, and a ray for each proper nonempty subset of~$[d]$.
(Note that we work in the subspace~$\one^\perp$ of~$\R^d$ so that the braid arrangement is essential and indeed has rays.)

The \defn{permutahedron}~$\Perm$ is the polytope defined equivalently as
\begin{itemize}
\item the convex hull of the points~$\sum_{i \in [d]} i \, \b{e}_{\sigma(i)}$ for all permutations~$\sigma$ of~$[d]$, see \cite{Schoute},
\item the intersection of the hyperplane~$\HH_d$ with the halfspaces~$\smash{\bigset{\b{x} \in \R^d}{\sum_{i \in I} x_i \ge \binom{|I|+1}{2}}}$ for all~${\varnothing \ne I \subsetneq [d]}$, see \cite{Rado}.
\end{itemize}
The braid fan~$\c{B}_d$ is the normal fan of the permutahedron~$\Perm$.
When oriented in the direction~${\b{\omega}_d \eqdef (d,\dots,1) - (1,\dots,d) = \sum_{i \in [d]} (d+1-2i) \, \b{e}_i}$, the skeleton of the permutahedron~$\Perm$ is isomorphic to the Hasse diagram of the classical weak order on permutations of~$[d]$.

%%%

\subsubsection{Deformed permutahedra}
\label{subsec:deformedPermutahedra}

A \defn{deformed permutahedron} (\aka polymatroid~\cite{Edmonds}, or generalized permutahedron~\cite{Postnikov, PostnikovReinerWilliams}) is a deformation of the permutahedron.
The normal fan of a deformed permutahedron is a collection of preposet cones~\cite{PostnikovReinerWilliams}.
The \defn{preposet cone} of a preposet~$\preccurlyeq$ on~$[d]$ is the cone~$\set{\b{x} \in \R^d}{x_i \le x_j \text{ if } i \preccurlyeq j}$.
For instance, the cones of the braid fan are precisely the preposet cones of the total preposets, \ie those where~$i \preccurlyeq j$ or~$j \preccurlyeq i$ (or both) for any~$i,j \in [d]$.

There are two standard parametrizations of the deformed permutahedra.
Namely, for a deformed permutahedron~$\polytope{P}$ in~$\R^d$, we define:
\begin{itemize}
\item its \defn{Minkowski coefficients}~$\big( \b{y}_I(\polytope{P}) \big)_{\varnothing \ne I \subseteq [d]}$ such that~$\polytope{P}$ is the Minkowski sum and difference~$\sum_{\varnothing \ne I \subseteq [d]} \b{y}_I(\polytope{P}) \, \triangle_I$, where $\simplex_I \eqdef \conv\set{\b{e}_i}{i \in I}$ is the face of the standard simplex~$\simplex_{[d]} \eqdef \conv\set{\b{e}_i}{i \in [d]}$ corresponding to~$I$,
\item its \defn{tight right hand sides}~$\big( \b{z}_J(\polytope{P}) \big)_{\varnothing \ne J \subseteq [d]}$ such that $\b{z}_J(\polytope{P}) \eqdef \min\set{\dotprod{\one_J}{\b{p}}}{\b{p} \in \b{P}}$.
\end{itemize}
As proved in~\cite{Postnikov, ArdilaBenedettiDoker}, these two parametrizations are related by boolean M\"obius inversion:
\[
\b{z}_J(\polytope{P})  = \sum_{I \subseteq J} \b{y}_I(\polytope{P}) 
\qquad\text{and}\qquad
\b{y}_I(\polytope{P})  = \sum_{J \subseteq I} (-1)^{|I \ssm J|} \, \b{z}_J(\polytope{P}).
\]

For instance, for the classical permutahedron~$\Perm$,
\begin{itemize}
\item its Minkowski coefficients are~$\b{y}_I \big( \Perm \big) = 1$ if~$|I| \le 2$ and~$0$ otherwise,
\item its tight right hand sides are~$\b{z}_J \big( \Perm \big) = \binom{|J|+1}{2}$.
\end{itemize}

As another illustration, recall that the \defn{associahedron}~$\Asso$ is the deformed permutahedron defined equivalently as
\begin{itemize}
\item the convex hull of the points~$\sum_{i \in [d]} \ell(T,i) \, r(T,i) \, \b{e}_i$ for all binary trees~$T$ with $d$~internal nodes, where $\ell(T,i)$ and~$r(T,i)$ respectively denote the numbers of leaves in the left and right subtrees of the $i$th node of~$T$ in infix labeling, see~\cite{Loday},
\item the intersection of the hyperplane~$\HH_d$ with the halfspaces~$\smash{\bigset{\b{x} \in \R^d}{\sum_{i \le \ell \le j} x_\ell \ge \binom{j-i+2}{2}}}$ for all~$1 \le i \le j \le d$, see~\cite{ShniderSternberg}.
\end{itemize}
Moreover,
\begin{itemize}
\item its Minkowski coefficients are~$\b{y}_I \big( \Asso \big) = 1$ if~$I$ is an interval of~$[d]$ and~$0$ otherwise, see~\cite{Postnikov},
\item its tight right hand sides are~$\b{z}_J \big( \Asso \big) = \binom{|J_1|+1}{2} + \dots + \binom{|J_k|+1}{2}$ where~$J = J_1 \cup \dots \cup J_k$ is the decomposition of~$J$ into maximal intervals of~$[d]$, see \cite{Lange}.
\end{itemize}

%%%%%%%%%

\subsection{Multiplihedra}
\label{subsec:multiplihedra}

We now consider the $(m,n)$-multiplihedron which realize the $m$-painted $n$-tree coarsening lattice.
These polytopes are illustrated in \cref{fig:multiplihedronFreehedronLabeledOriented13,fig:multiplihedronFreehedronLabeledOriented3,fig:multiplihedronFreehedronLabeledOriented4}.
Although they were previously constructed when~$m = 1$ in~\cite{Stasheff-HSpaces, SaneblidzeUmble-diagonals, Forcey-multiplihedra, ForceyLauveSottile, MauWoodward, ArdilaDoker}, we use here the construction of~\cite[Sect.~3]{ChapotonPilaud}.
This construction is just an example of the shuffle product on deformed permutahedra, introduced in~\cite[Sect.~2]{ChapotonPilaud}.
However, we do not really need the generality of this operation and define here the $(m,n)$-multiplihedron using its vertex and facet descriptions.

\begin{definition}
\label{def:verticesPaintedTrees}
Consider a binary $m$-painted $n$-tree~$\PT \eqdef (T, C, \mu)$.
We associate to~$\PT$ a point~$\b{a}(\PT)$ whose $p$th coordinate is
\begin{itemize}
\item if~$p \le m$, the number of binary nodes and cuts weakly below the cut labeled by~$p$,
\item if~$p \ge m+1$, the number of cuts below plus the product of the numbers of leaves in the left and right subtrees of the node of~$T$ labeled by~$p-m$ in inorder.
\end{itemize}
See \cref{fig:verticesPaintedTrees} for some examples.
\end{definition}

\begin{definition}
\label{def:inequalitiesPaintedTrees}
Consider the hyperplane~$\HH_{m+n}$ of~$\R^{m+n}$ defined by the equality
\[
\dotprod{\b{x}}{\b{1}_{[m+n]}} = \binom{m+n+1}{2}.
\]
Moreover, for each rank~$m+n-2$ $m$-painted $n$-tree~$\PT \eqdef (T, C, \mu)$, consider the halfspace~$\b{H}(\PT)$ of~$\R^{m+n}$ defined by the inequality
\[
\dotprod{\b{x}}{\one_{A \cup B}} \ge \binom{|A|+1}{2} + \binom{|B_1|+1}{2} + \dots + \binom{|B_k|+1}{2} + |A| \cdot |B|,
\]
where
\begin{itemize}
\item $A$ denotes the set of elements of~$[m]$ which label the cut of~$C$ not containing the root of~$T$ (hence, $A = \varnothing$ if~$C$ has only one cut, which contains the root of~$T$), 
\item $B \eqdef B_1 \cup \dots \cup B_k$ where~$B_1, \dots, B_k$ are the inorder labels shifted by~$m$ of the non-unary nodes of~$T$ distinct from the root of~$T$.
\end{itemize}
See \cref{fig:inequalitiesPaintedTrees} for some examples.
\end{definition}

\begin{proposition}[{\cite[Props.~116, 122, 123]{ChapotonPilaud}}]
\label{prop:VHDescriptionsPaintedTrees}
The $m$-painted $n$-tree coarsening lattice is isomorphic to the face lattice of the \defn{$(m,n)$-multiplihedron}~$\Multiplihedron$, defined equivalently as
\begin{enumerate}[(i)]
\item the convex hull of the vertices~$\b{a}(\PT)$ for all binary $m$-painted $n$-trees~$\PT$,
\item the intersection of the hyperplane~$\HH_{m+n}$ with the halfspaces~$\b{H}(\PT)$ for all rank~$m+n-2$ \mbox{$m$-painted} $n$-trees~$\PT$.
\end{enumerate}
\end{proposition}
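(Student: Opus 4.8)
The plan is to realize $\Multiplihedron$ as an explicit deformed permutahedron and then read off both descriptions from its normal fan. First I would exhibit its decomposition as a Minkowski sum of faces of the standard simplex. Guided by the facet right-hand side in \cref{def:inequalitiesPaintedTrees}, I expect the Minkowski coefficients to be $\b{y}_I(\Multiplihedron) = 1$ precisely when $I$ is a subset of $[m]$ of size at most two (the $m$-permutahedron part), a nonempty interval of $\{m+1, \dots, m+n\}$ (the $n$-associahedron part), or a crossing pair $\{i,j\}$ with $i \in [m]$ and $j > m$ (produced by the shuffle), and $\b{y}_I = 0$ otherwise. Since a Minkowski sum of faces of the standard simplex is automatically a deformed permutahedron, this defines a polytope $\polytope{P}$ whose normal fan coarsens the braid fan, hence consists of preposet cones.

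Next I would match the two standard parametrizations to the two stated descriptions. Applying $\b{z}_J(\polytope{P}) = \sum_{I \subseteq J} \b{y}_I(\polytope{P})$ to the coefficients above and splitting $J$ into $A \eqdef J \cap [m]$ and $B \eqdef J \cap \{m+1, \dots, m+n\}$, the three families of coefficients contribute exactly $\binom{|A|+1}{2}$, the sum $\sum_i \binom{|B_i|+1}{2}$ over the maximal intervals $B_1, \dots, B_k$ of $B$, and $|A|\cdot|B|$; this reproduces the right-hand side of \cref{def:inequalitiesPaintedTrees}, so $\polytope{P}$ lies in the halfspaces $\b{H}(\PT)$. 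Dually, evaluating the vertices of $\polytope{P}$ by the greedy rule for deformed permutahedra (the $p$th coordinate of the vertex selected by a generic direction is $\b{z}$ of the corresponding initial segment minus $\b{z}$ of its predecessor), I would check that the vertex selected by a direction compatible with a binary $m$-painted $n$-tree $\PT$ is exactly the point $\b{a}(\PT)$ of \cref{def:verticesPaintedTrees}: the bilinear term yields the product of the left and right subtree sizes at each tree node, while the interval increments account for the counting of cuts below. Together these two computations show that the convex hull and the halfspace intersection both equal $\polytope{P}$.

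Finally, for the face lattice I would identify the cones of the normal fan of $\polytope{P}$ with the preposet cones $C_{\preccurlyeq_{\PT}}$ ranging over all $m$-painted $n$-trees $\PT$. Since the face lattice of a polytope is anti-isomorphic to its normal fan ordered by inclusion, and since $C_{\preccurlyeq} \supseteq C_{\preccurlyeq'}$ holds exactly when ${\preccurlyeq} \subseteq {\preccurlyeq'}$, the refinement order $\PT \le \PT'$ iff ${\preccurlyeq_{\PT}} \supseteq {\preccurlyeq_{\PT'}}$ translates into reverse inclusion of faces, which is the claimed anti-isomorphism. Along the way the rank-$0$ (binary) trees, having partial-order preposets, match the full-dimensional cones, i.e.\ the vertices; and the rank-$(m+n-2)$ trees match the rays of the fan, i.e.\ the facets, explaining why only the latter appear in the halfspace description.

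The main obstacle is precisely this identification of the normal fan with the family $\{C_{\preccurlyeq_{\PT}}\}$, together with the verification that the greedy vertex of a chamber compatible with a binary tree $\PT$ is constant across all braid chambers refining $C_{\preccurlyeq_{\PT}}$ and equals $\b{a}(\PT)$. Establishing this amounts to setting up the bijection between faces of $\polytope{P}$ and $m$-painted $n$-trees and checking it is order-reversing. The supermodularity of $J \mapsto \b{z}_J$ (a sum of the convex function $t \mapsto \binom{t+1}{2}$ on $|A|$, the associahedron function on $B$, and the bilinear term $|A|\cdot|B|$, each supermodular) guarantees that the inequalities indexed by rank-$(m+n-2)$ trees are tight and irredundant; but pinning down the combinatorial correspondence between the minimal tree structure and the greedy increments is the genuinely delicate part.
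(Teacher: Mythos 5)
The first thing to observe is that the paper contains no proof of this proposition: it is imported verbatim from \cite[Props.~116, 122, 123]{ChapotonPilaud}, where the $(m,n)$-multiplihedron is obtained as the shuffle product of the $m$-permutahedron with Loday's $n$-associahedron. Your route --- defining the polytope by Minkowski coefficients and recovering both descriptions by deformed-permutahedron calculus --- is in the spirit of that cited construction rather than of anything proved in this paper, and the data you posit is correct: your three families of coefficients are exactly those recorded in \cref{rem:YZCoordinatesPaintedTrees}, and your summation $\b{z}_J = \sum_{I \subseteq J} \b{y}_I$ correctly reproduces the right-hand side $\binom{|A|+1}{2} + \sum_i \binom{|B_i|+1}{2} + |A|\cdot|B|$ of \cref{def:inequalitiesPaintedTrees}. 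For comparison, where this paper \emph{does} prove such a statement --- the twin result \cref{prop:VHDescriptionsLightedShades} for the Hochschild polytope, in \cref{subsec:proofs} --- it uses a different and more local technique: it first shows that the preposet cones form a complete simplicial fan (\cref{prop:fanLightedShades2}) and then applies the criterion of \cite[Thm~4.1]{HohlwegLangeThomas}, which reduces the whole polytopality question to two elementary checks (\cref{lem:intersectionPoint,lem:goodDirection}): chosen vertices lie on the hyperplanes of the rays of their cone, and the difference of vertices across each wall points in the correct direction. That criterion is designed precisely to bypass the global fan identification on which your argument hinges.

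The genuine gap is that the two steps carrying all the mathematical content are named but not executed. The greedy-vertex verification (that the $\b{z}$-increments along any linear extension of $\preccurlyeq_{\PT}$ are independent of the extension and equal $\b{a}(\PT)$) is asserted, not proven; and, more seriously, the facet description and the face-lattice anti-isomorphism both rest on identifying the normal fan of your Minkowski sum $\polytope{P}$ with the preposet cones of all $m$-painted $n$-trees, which you yourself defer as ``the genuinely delicate part.'' Without that identification you only know $\polytope{P} \subseteq \bigcap \b{H}(\PT)$, not that these halfspaces cut out $\polytope{P}$, nor which of them define facets. Your appeal to supermodularity does not close this: supermodularity of $J \mapsto \b{z}_J$ guarantees that every inequality $\dotprod{\one_J}{\b{x}} \ge \b{z}_J$ is tight on $\polytope{P}$, but it says nothing about which of these inequalities are irredundant; irredundancy of exactly the rank-$(m+n-2)$ family is equivalent to knowing the rays of the normal fan, i.e., to the very identification being deferred. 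So as written the proposal is a sound plan whose concluding step assumes what it needs to prove; to complete it you would either run an edge-by-edge check in the style of \cref{lem:goodDirection} together with \cref{thm:HohlwegLangeThomas}, or invoke the shuffle-product machinery of \cite{ChapotonPilaud}, which is in effect what the paper does by citing it.
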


\begin{proposition}[{\cite[Prop.~118]{ChapotonPilaud}}]
\label{prop:fanPaintedTrees}
The normal fan of the $(m,n)$-multiplihedron~$\Multiplihedron$ is the fan whose cones are the preposet cones of the preposets~$\preccurlyeq_{\PT}$ of all $m$-painted $n$-trees~$\PT$.
%The preposet cones of the preposets~$\preccurlyeq_{\PT}$ of all $m$-painted $n$-trees~$\PT$ form a fan, called the \defn{$m$-painted $n$-tree fan}.
\end{proposition}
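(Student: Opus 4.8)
The plan is to combine the vertex and facet descriptions of \cref{prop:VHDescriptionsPaintedTrees} with the theory of deformed permutahedra. Write~$C_{\preccurlyeq_{\PT}} \eqdef \set{\b{x} \in \R^{m+n}}{x_i \le x_j \text{ if } i \preccurlyeq_{\PT} j}$ for the preposet cone of~$\preccurlyeq_{\PT}$. Since~$\Multiplihedron$ is a deformed permutahedron --- it is the shuffle product of an~$m$-permutahedron with an~$n$-associahedron, hence a weak Minkowski summand of~$\Perm[m+n]$, as already witnessed by the fact that every facet normal~$\one_{A \cup B}$ of \cref{def:inequalitiesPaintedTrees} is a facet normal of the permutahedron --- its normal fan coarsens the braid fan~$\c{B}_{m+n}$, so that each of its cones is a preposet cone~\cite{PostnikovReinerWilliams}. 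Moreover, \cref{prop:VHDescriptionsPaintedTrees} identifies the faces of~$\Multiplihedron$ with the $m$-painted $n$-trees, anti-isomorphically for inclusion. It thus remains to determine, for each~$\PT$, the preposet attached to the normal cone of the corresponding face, and to check that it equals~$\preccurlyeq_{\PT}$.

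First I would reduce to the vertices. Composing the anti-isomorphism of \cref{prop:VHDescriptionsPaintedTrees} with the standard anti-isomorphism from the face lattice to the normal fan shows that~$\PT \mapsto N(\PT)$, the normal cone of the face of~$\Multiplihedron$ indexed by~$\PT$, is an inclusion isomorphism of the refinement order; and~$\PT \mapsto C_{\preccurlyeq_{\PT}}$ is likewise an inclusion isomorphism, since more relations give a smaller cone. As a complete fan is determined by its full-dimensional cones, and the normal cone of a face is the intersection of the normal cones of its vertices, it suffices to prove~$N(\PT) = C_{\preccurlyeq_{\PT}}$ for binary~$\PT$; equality for all~$\PT$ then follows from the two order isomorphisms. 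I would record here that a binary tree has~$k = m$ cuts: indeed, for a binary tree~$\bigcup C$ consists precisely of the unary nodes, so~$|T| = n + |\bigcup C|$ and the rank~$m + n - |T| - |C| + |\bigcup C|$ simplifies to~$m - k$, which vanishes. Hence~$\preccurlyeq_{\PT}$ is a genuine partial order whose Hasse diagram is a tree on the~$m + n$ singleton classes, so that~$C_{\preccurlyeq_{\PT}}$ is full-dimensional and simplicial --- consistent with~$\Multiplihedron$ being a simple polytope.

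For the vertex identification, I would use the Minkowski decomposition of~$\Multiplihedron$ as a signed sum~$\sum_I \b{y}_I(\Multiplihedron) \, \simplex_I$ of faces of the standard simplex. For~$\b{c}$ generic in the braid chamber indexed by a linear order~$\prec$, the maximizer of~$\dotprod{\b{c}}{\cdot}$ over each summand~$\simplex_I$ is the vertex~$\b{e}_{\max_{\prec} I}$, where~$\max_{\prec} I$ is the~$\prec$-largest index of~$I$; hence the vertex of~$\Multiplihedron$ selected by~$\b{c}$ is~$\sum_I \b{y}_I(\Multiplihedron) \, \b{e}_{\max_{\prec} I}$, and depends only on~$\prec$. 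Two braid chambers thus lie in the same maximal cone of the normal fan exactly when their linear orders select the same vertex, so the normal cone of~$\b{a}(\PT)$ is the union of the braid chambers whose linear order selects~$\b{a}(\PT)$, whereas~$C_{\preccurlyeq_{\PT}}$ is the union of the braid chambers of the linear extensions of~$\preccurlyeq_{\PT}$. The crux is therefore to prove that~$\sum_I \b{y}_I(\Multiplihedron) \, \b{e}_{\max_{\prec} I} = \b{a}(\PT)$ if and only if~$\prec$ is a linear extension of~$\preccurlyeq_{\PT}$.

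I expect this equivalence to be the main obstacle. One direction builds~$\PT$ from~$\prec$: the greedy rule above must recover, simultaneously, both the~$m$ cut positions recorded by the coordinates~$p \le m$ of \cref{def:verticesPaintedTrees} and the left/right leaf products at the binary nodes recorded by the coordinates~$p \ge m+1$, and one must check that the tree and cuts so produced are exactly those whose preposet~$\preccurlyeq_{\PT}$ (from \cref{def:preorderPaintedTree}) is refined by~$\prec$; this is where the interleaving of the permutation part (the cut labels) and the associahedron part (the tree shape) inherent in the shuffle has to be controlled. Alternatively, on the facet side, the normal cone of a vertex of the simple polytope~$\Multiplihedron$ is spanned by the outer normals of the facets through it, so the verification reduces to checking that the facets~$\b{H}(\PT')$ of \cref{def:inequalitiesPaintedTrees} containing~$\b{a}(\PT)$ are exactly those whose sets~$A \cup B$ are the order ideals of~$\preccurlyeq_{\PT}$, whence their outer normals~$-\one_{A \cup B}$ positively span~$C_{\preccurlyeq_{\PT}}$. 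Once the maximal cones are matched in either way, the remaining cones and their preposets follow formally from the order isomorphisms, completing the proof.
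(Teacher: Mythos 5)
A preliminary remark: the paper itself contains no proof of this statement --- it is imported wholesale from \cite[Prop.~118]{ChapotonPilaud}. The only argument of this kind written out in the paper is for the Hochschild analogue (\cref{prop:fanLightedShades}), proved in \cref{subsec:proofs} by first showing that the cones form a complete \emph{simplicial} fan (\cref{prop:fanLightedShades2}) and then applying the local criterion of \cref{thm:HohlwegLangeThomas}, whose hypotheses are checked in \cref{lem:intersectionPoint,lem:goodDirection}. Measured against that benchmark, your proposal contains one outright error and one essential gap. The error: you claim that for binary~$\PT$ the Hasse diagram of~$\preccurlyeq_{\PT}$ is a tree, that~$C_{\preccurlyeq_{\PT}}$ is simplicial, and that this is ``consistent with~$\Multiplihedron$ being a simple polytope''. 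The multiplihedron is \emph{not} simple: the paper stresses precisely this contrast (the rotation graph on binary $m$-painted $n$-trees is not regular, and only~$\HP$ is asserted to be simple in the remark following \cref{prop:graphLightedShades}). Concretely, $\Multiplihedron[1][3]$ has $21$ vertices and $13$ facets in dimension~$3$ (\cref{table:verticesMultiplihedra,table:facetsMultiplihedra}), which violates $2E = 3V$; combinatorially, a cut passing through both unary children of two distinct binary nodes creates a diamond in the Hasse diagram of~$\preccurlyeq_{\PT}$, so the corresponding normal cone has more facets than its dimension. This is not a cosmetic slip: the non-simpliciality is exactly why the paper's machinery (\cref{thm:HohlwegLangeThomas}) works for~$\HP$ but cannot be transplanted to~$\Multiplihedron$, and any proof of the present proposition must cope with non-simple vertices.

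The essential gap is that the heart of the argument is never carried out. Your scaffolding is sound: $\Multiplihedron$ is a deformed permutahedron (though not for the reason given parenthetically --- having facet normals among the vectors~$\one_{A \cup B}$ does \emph{not} by itself imply that the normal fan coarsens the braid fan; you must rely on the shuffle-product result of \cite{ChapotonPilaud}), its normal cones are preposet cones, the vertex selected by a generic functional in the chamber of~$\prec$ is~$\sum_I \b{y}_I(\Multiplihedron)\,\b{e}_{\max_\prec I}$, and~$C_{\preccurlyeq_{\PT}}$ is the union of the chambers of the linear extensions of~$\preccurlyeq_{\PT}$. You then correctly isolate the crux --- that this greedy sum equals~$\b{a}(\PT)$ if and only if~$\prec$ extends~$\preccurlyeq_{\PT}$ --- and stop: both proposed routes (reconstructing~$\PT$ greedily from~$\prec$, or matching the facets of \cref{def:inequalitiesPaintedTrees} through~$\b{a}(\PT)$ with the rays of~$C_{\preccurlyeq_{\PT}}$) are left at the level of ``one must check''. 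That verification, which controls how the cut coordinates and the leaf-product coordinates of \cref{def:verticesPaintedTrees} interact with a linear order, is the entire content of the proposition; without it the proposal is a plan, not a proof. Finally, the closing claim that the non-maximal cones ``follow formally from the two order isomorphisms'' is also not formal: it requires knowing that~$\preccurlyeq_{\PT}$ is the transitive closure of the union of the posets~$\preccurlyeq_{\PT'}$ over the binary~$\PT'$ refining~$\PT$ (equivalently, that the preposet cones are closed under intersection of maximal cones), which is the multiplihedral analogue of \cref{prop:fanLightedShades2} and is yet another statement needing proof.
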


\begin{proposition}[{\cite[Prop.~119]{ChapotonPilaud}}]
\label{prop:graphPaintedTrees}
The skeleton of the $(m,n)$-multiplihedron~$\Multiplihedron$ oriented in the direction~$\b{\omega}_{m+n} \eqdef (m+n, \dots, 1) - (1, \dots, m+n)$ is isomorphic to the right rotation digraph on binary $m$-painted $n$-trees.
\end{proposition}

%\begin{proposition}[{\cite[Prop.~124]{ChapotonPilaud}}]
%\label{prop:MinkowskiDescriptionsPaintedTrees}
%The $(m,n)$-multiplihedron~$\Multiplihedron$ is the Minkowski sum of the faces ${\simplex_I \eqdef \conv\set{\b{e}_i}{i \in I}}$ of the standard simplex~$\simplex_{[m+n]}$ corresponding to all subsets~${I \subseteq [m+n]}$ such that~$|I| \le 2$ and~$|I \cap [n]^{+m}| \le 1$, or~$I$ is a subinterval of~$[n]^{+m}$.
%\end{proposition}

\begin{remark}
\label{rem:YZCoordinatesPaintedTrees}
As observed in \cite[Prop.~124]{ChapotonPilaud}, it is straightforward to obtain the $\b{y}$ and $\b{z}$ parametrizations of the $(m,n)$-multiplihedron~$\Multiplihedron$.
Namely, for~$I \subseteq [m+n]$, we have
\[
\b{y}_I \big( \Multiplihedron \big) = 
\begin{cases}
	1 & \text{ if~$|I| \le 2$ and~$|I \cap [n]^{+m}| \le 1$, or~$I$ is a subinterval of~$[n]^{+m}$} \\
	0 & \text{ otherwise}
\end{cases}
\]
and
\[
\b{z}_J \big( \Multiplihedron \big) = \binom{|A|+1}{2} + \binom{|B_1|+1}{2} + \dots + \binom{|B_k|+1}{2} + |A| \cdot |B|,
\]
where~$A \eqdef J \cap [m]$ and~$B \eqdef B_1 \cup \dots \cup B_k$ is the coarsest interval decomposition of~$J \ssm [m]$.
\end{remark}

\begin{example}
When~$m \! = \! 0$, the $(0,n)$-multiplihedron is Loday's associahedron~\cite{Loday}.
When~$m \! = \! 1$, the $(1,n)$-multiplihedron is the classical multiplihedron alternatively constructed in~\cite{Forcey-multiplihedra, ArdilaDoker}.
\end{example}

%%%%%%%%%

\begin{figure}
	\centerline{\includegraphics[scale=.9]{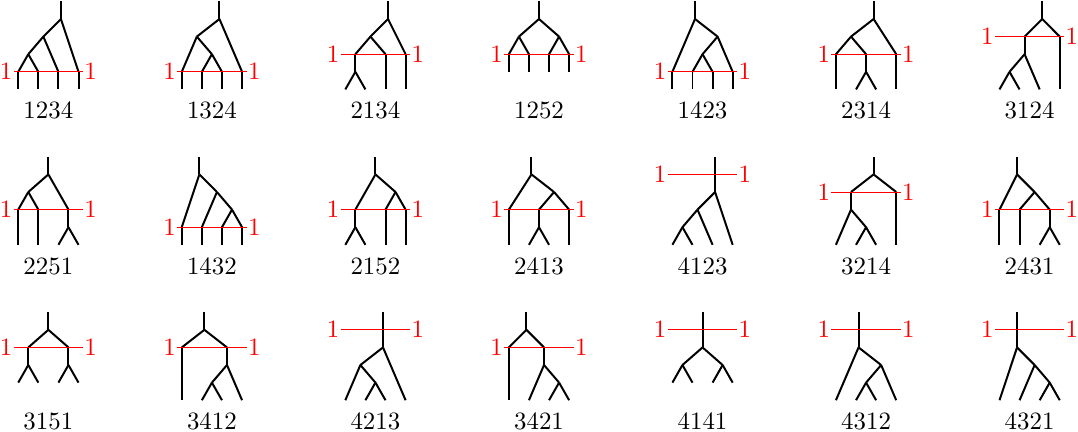}}
%	\centerline{
%	\begin{tabular}{c@{}c@{}c@{}c@{}c@{}c@{}c}
%		\paintedTree[1]{[[[[1[]][1[]]][1[]]][1[]]]}{1} &
%		\paintedTree[1]{[[[1[]][[1[]][1[]]]][1[]]]}{1} &
%		\paintedTree[1]{[[[1[[][]]][1[]]][1[]]]}{1} &
%		\paintedTree[1]{[[[1[]][1[]]][[1[]][1[]]]]}{1} &
%		\paintedTree[1]{[[1[]][[[1[]][1[]]][1[]]]]}{1} &
%		\paintedTree[1]{[[[1[]][1[[][]]]][1[]]]}{1} &
%		\paintedTree[1]{[[1[[[][]][]]][1[]]]}{1} \\
%		$1234$ & 
%		$1324$ & 
%		$2134$ & 
%		$1252$ & 
%		$1423$ & 
%		$2314$ & 
%		$3124$ \\
%		\paintedTree[1]{[[[1[]][1[]]][1[[][]]]]}{1} &
%		\paintedTree[1]{[[1[]][[1[]][[1[]][1[]]]]]}{1} &
%		\paintedTree[1]{[[1[[][]]][[1[]][1[]]]]}{1} &
%		\paintedTree[1]{[[1[]][[1[[][]]][1[]]]]}{1} &
%		\paintedTree[1]{[1[[[[][]][]][]]]}{1} &
%		\paintedTree[1]{[[1[[][[][]]]][1[]]]}{1} &
%		\paintedTree[1]{[[1[]][[1[]][1[[][]]]]]}{1} \\
%		$2251$ & 
%		$1432$ & 
%		$2152$ & 
%		$2413$ & 
%		$4123$ & 
%		$3214$ & 
%		$2431$ \\
%		\paintedTree[1]{[[1[[][]]][1[[][]]]]}{1} &
%		\paintedTree[1]{[[1[]][1[[[][]][]]]]}{1} &
%		\paintedTree[1]{[1[[[][[][]]][]]]}{1} &
%		\paintedTree[1]{[[1[]][1[[][[][]]]]]}{1} &
%		\paintedTree[1]{[1[[[][]][[][]]]]}{1} &
%		\paintedTree[1]{[1[[][[[][]][]]]]}{1} &
%		\paintedTree[1]{[1[[][[][[][]]]]]}{1} \\
%		$3151$ & 
%		$3412$ & 
%		$4213$ & 
%		$3421$ & 
%		$4141$ & 
%		$4312$ & 
%		$4321$
%	\end{tabular}
%	}
	\caption{Vertices of $\Multiplihedron[1][3]$.}
	\label{fig:verticesPaintedTrees}
\end{figure}

\begin{figure}
	\vspace{.5cm}
	\centerline{\includegraphics[scale=.9]{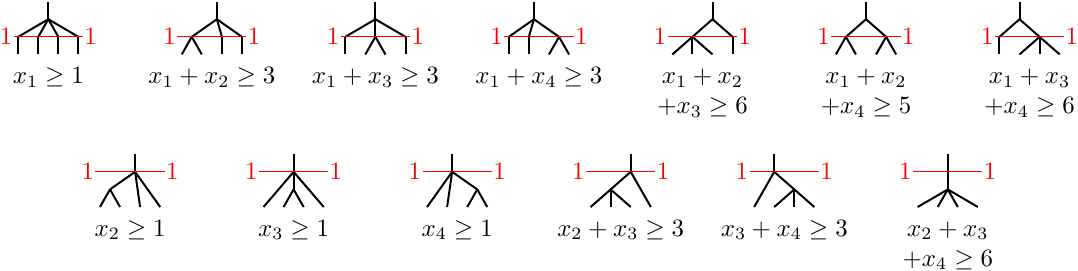}}
%	\centerline{
%	\begin{tabular}{c@{}c@{}c@{}c@{}c@{}c@{}c}
%		\paintedTree[1]{[[1[]][1[]][1[]][1[]]]}{1} &
%		\paintedTree[1]{[[1[][]][1[]][1[]]]}{1} &
%		\paintedTree[1]{[[1[]][1[][]][1[]]]}{1} &
%		\paintedTree[1]{[[1[]][1[]][1[][]]]}{1} &
%		\paintedTree[1]{[[1[][][]][1[]]]}{1} &
%		\paintedTree[1]{[[1[][]][1[][]]]}{1} &
%		\paintedTree[1]{[[1[]][1[][][]]]}{1} \\
%		$x_1 \ge 1$ & 
%		$x_1 + x_2 \ge 3$ & 
%		$x_1 + x_3 \ge 3$ & 
%		$x_1 + x_4 \ge 3$ & 
%		$x_1 + x_2$ & 
%		$x_1 + x_2$ & 
%		$x_1 + x_3$ \\
%		&
%		&
%		&
%		&
%		$+ x_3 \ge 6$ & 
%		$+ x_4 \ge 5$ &
%		$+ x_4 \ge 6$
%	\end{tabular}
%	}
%	\centerline{
%	\begin{tabular}{c@{}c@{}c@{}c@{}c@{}c}
%		\paintedTree[1]{[1[[][]][][]]}{1} &
%		\paintedTree[1]{[1[][[][]][]]}{1} &
%		\paintedTree[1]{[1[][][[][]]]}{1} &
%		\paintedTree[1]{[1[[][][]][]]}{1} &
%		\paintedTree[1]{[1[][[][][]]]}{1} &
%		\paintedTree[1]{[1[[][][][]]]}{1} \\
%		$x_2 \ge 1$ & 
%		$x_3 \ge 1$ & 
%		$x_4 \ge 1$ & 
%		$x_2 + x_3 \ge 3$ &
%		$x_3 + x_4 \ge 3$ &
%		$x_2 + x_3$ \\
%		&
%		&
%		&
%		&
%		&
%		$+ x_4 \ge 6$
%	\end{tabular}
%	}
	\caption{Facet defining inequalities of $\Multiplihedron[1][3]$.}
	\label{fig:inequalitiesPaintedTrees}
\end{figure}

\begin{figure}
	\vspace{.5cm}
	\centerline{\includegraphics[scale=.9]{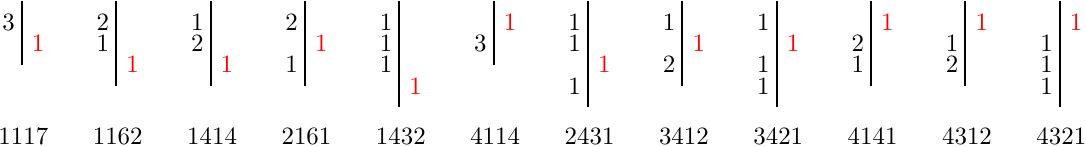}}
%	\centerline{
%	\begin{tabular}{c@{\hspace{-.2cm}}c@{\hspace{-.2cm}}c@{\hspace{-.2cm}}c@{\hspace{-.2cm}}c@{\hspace{-.2cm}}c@{\hspace{-.2cm}}c@{\hspace{-.2cm}}c@{\hspace{-.2cm}}c@{\hspace{-.2cm}}c@{\hspace{-.2cm}}c@{\hspace{-.2cm}}c}
%		\lightedShade[1]{[3 [, tier=1[]]]}{1} &
%		\lightedShade[1]{[2 [1 [, tier=1[]]]]}{1} &
%		\lightedShade[1]{[1 [2 [, tier=1[]]]]}{1} &
%		\lightedShade[1]{[2 [, tier=1[1 []]]]}{1} &
%		\lightedShade[1]{[1 [1 [1 [, tier=1[]]]]]}{1} &
%		\lightedShade[1]{[, tier=1[3 []]]}{1} &
%		\lightedShade[1]{[1 [1 [, tier=1[1 []]]]]}{1} &
%		\lightedShade[1]{[1 [, tier=1[2 []]]]}{1} &
%		\lightedShade[1]{[1 [, tier=1[1 [1 []]]]]}{1} &
%		\lightedShade[1]{[, tier=1[2 [1 []]]]}{1} &
%		\lightedShade[1]{[, tier=1[1 [2 []]]]}{1} &
%		\lightedShade[1]{[, tier=1[1 [1 [1 []]]]]}{1}
%		\\
%		$1117$ &
%		$1162$ &
%		$1414$ &
%		$2161$ &
%		$1432$ &
%		$4114$ &
%		$2431$ &
%		$3412$ &
%		$3421$ &
%		$4141$ &
%		$4312$ &
%		$4321$
%	\end{tabular}
%	}
	\caption{Vertices of $\HP[1][3]$.}
	\label{fig:verticesLightedShades}
\end{figure}

\begin{figure}
	\vspace{.5cm}
	\centerline{\includegraphics[scale=.9]{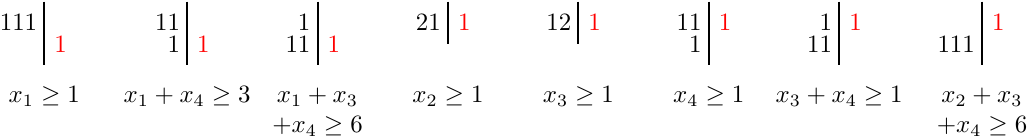}}
%	\centerline{
%	\begin{tabular}{c@{}c@{}c@{}c@{}c@{}c@{}c@{}c}
%		\lightedShade[1]{[111 [, tier=1 []]]}{1} &
%		\lightedShade[1]{[11 [1, tier=1 []]]}{1} &
%		\lightedShade[1]{[1 [11, tier=1 []]]}{1} &
%		\lightedShade[1]{[21, tier=1 []]}{1} &
%		\lightedShade[1]{[12, tier=1 []]}{1} &
%		\lightedShade[1]{[11, tier=1 [1 []]]}{1} &
%		\lightedShade[1]{[1, tier=1 [11 []]]}{1} &
%		\lightedShade[1]{[, tier=1 [111 []]]}{1} \\
%		$x_1 \ge 1$ & 
%		$x_1 + x_4 \ge 3$ & 
%		$x_1 + x_3$ &
%		$x_2 \ge 1$ & 
%		$x_3 \ge 1$ & 
%		$x_4 \ge 1$ & 
%		$x_3 + x_4 \ge 1$ &
%		$x_2 + x_3$ \\
%		&
%		&
%		$+ x_4 \ge 6$ &
%		&
%		&
%		&
%		&
%		$+ x_4 \ge 6$
%	\end{tabular}
%	}
	\caption{Facet defining inequalities of $\HP[1][3]$.}
	\label{fig:inequalitiesLightedShades}
\end{figure}

%%%%%%%%%

%\begin{figure}
%	\centerline{\includegraphics[scale=.6]{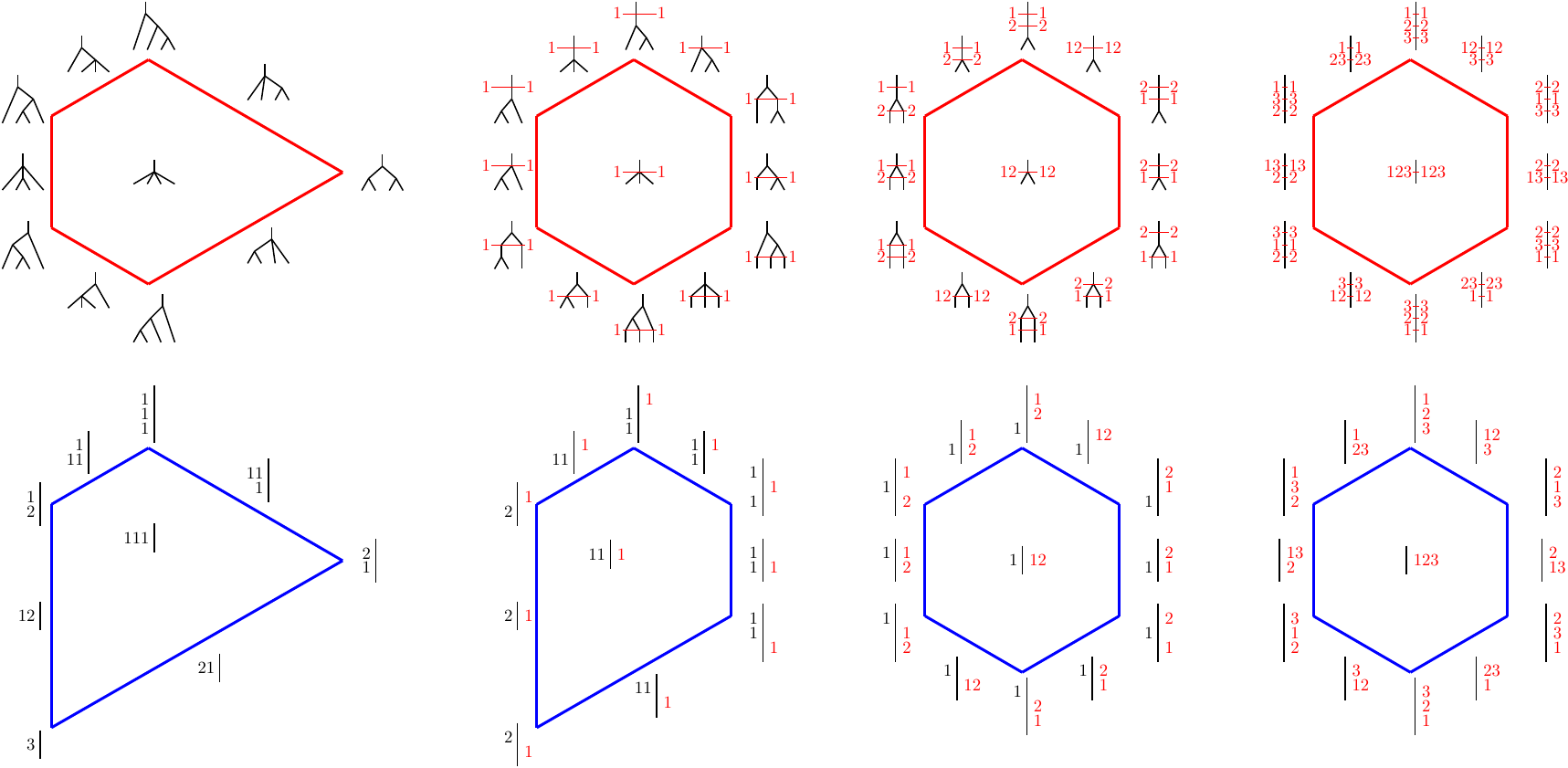}}}
%%	\centerline{\input{multiplihedronFreehedronLabeled3}}
%	\caption{The multiplihedron~$\Multiplihedron$ (top) and the Hochschild polytope~$\HP$ (bottom) for $(m,n) = (0,3)$, $(1,2)$, $(2,1)$, and~$(3,0)$ (left to right).}
%	\label{fig:multiplihedronFreehedronLabeled3}
%\end{figure}

\afterpage{
\begin{figure}[p]
	\centerline{\includegraphics[scale=1.2]{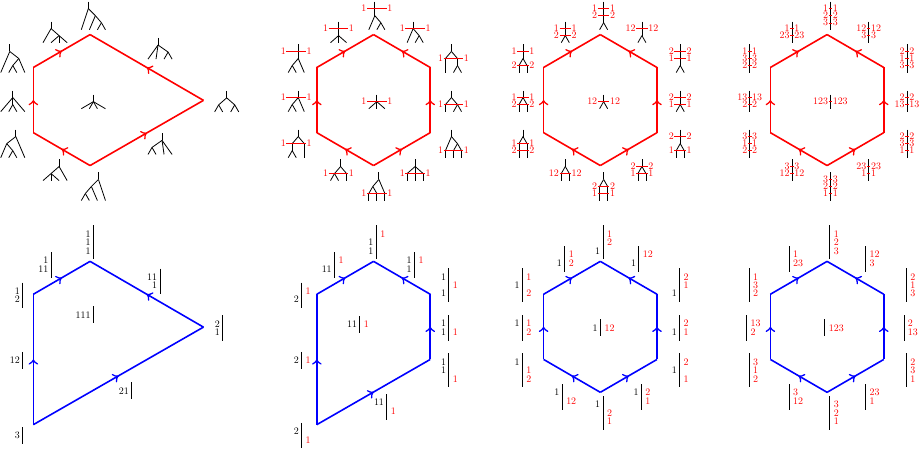}}
%	\centerline{\input{multiplihedronFreehedronLabeledOriented3}}
	\caption{The multiplihedron~$\Multiplihedron$ (top) and the Hochschild polytope~$\HP$ (bottom) for $(m,n) = (0,3)$, $(1,2)$, $(2,1)$, and~$(3,0)$ (left to right).}
	\label{fig:multiplihedronFreehedronLabeledOriented3}
\end{figure}
}

%\begin{figure}
%	\centerline{\includegraphics[scale=.4]{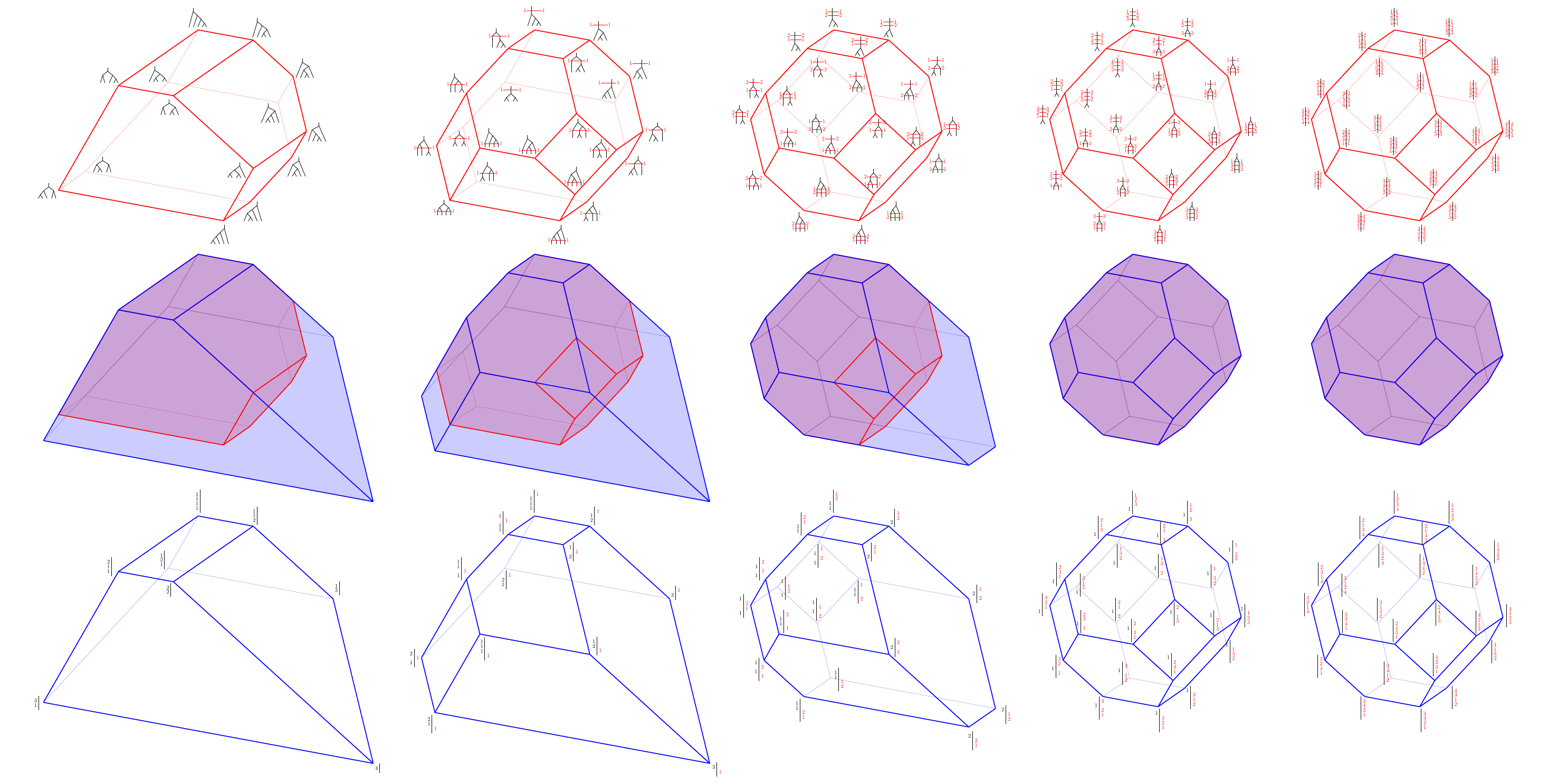}}
%%	\centerline{\scalebox{.3}{\input{multiplihedronFreehedronLabeled4}}}
%	\caption{The multiplihedron~$\Multiplihedron$ (top) and the Hochschild polytope~$\HP$ (bottom) for $(m,n) = (0,4)$, $(1,3)$, $(2,2)$, $(3,1)$, and~$(4,0)$ (left to right).}
%	\label{fig:multiplihedronFreehedronLabeled4}
%\end{figure}

\afterpage{
\begin{figure}[p]
	\vspace{.3cm}
	\centerline{\includegraphics[scale=.38]{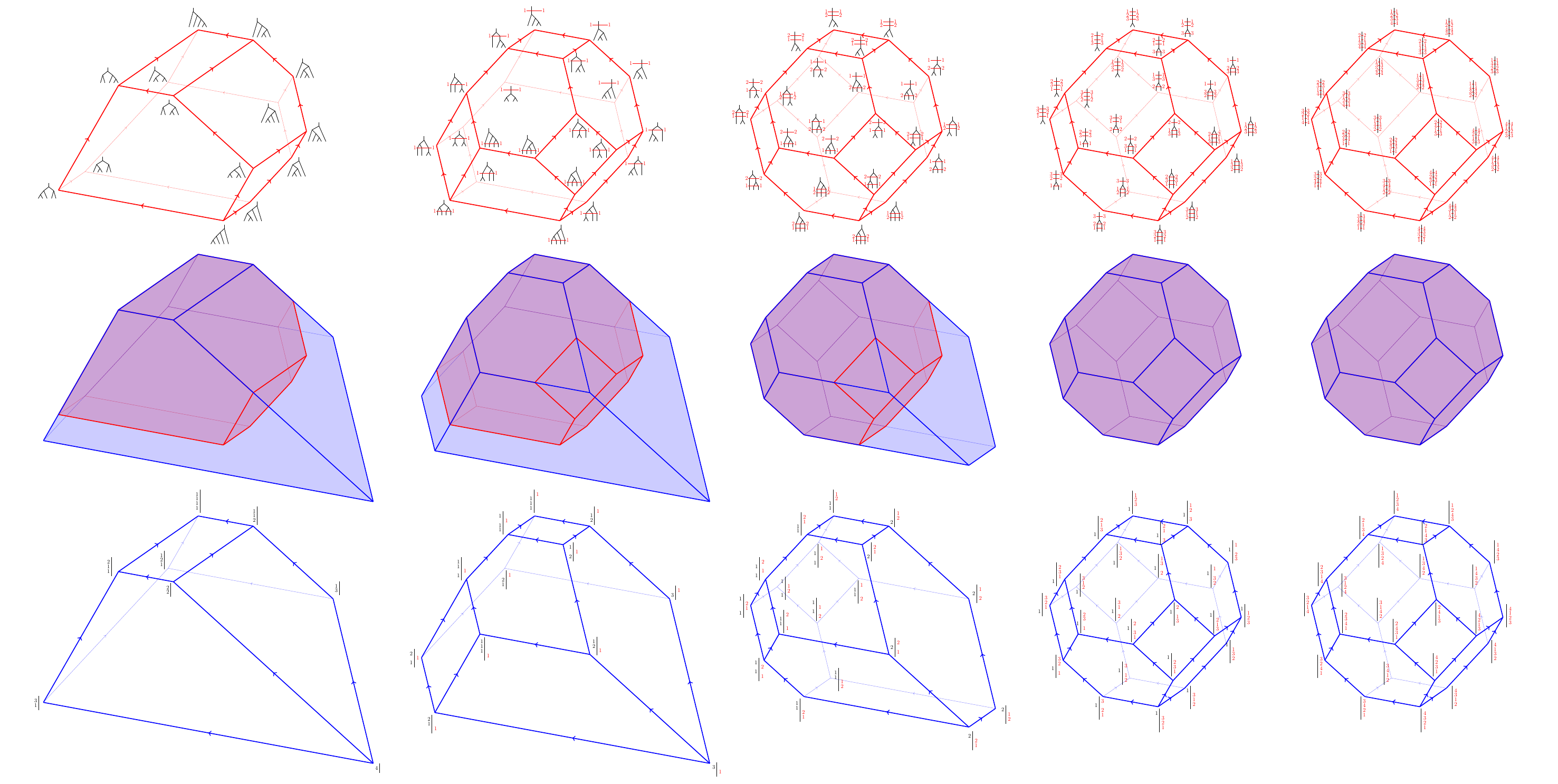}}
%	\centerline{\scalebox{.3}{\input{multiplihedronFreehedronLabeledOriented4}}}
	\caption{The multiplihedron~$\Multiplihedron$ (top) and the Hochschild polytope~$\HP$ (bottom) for $(m,n) = (0,4)$, $(1,3)$, $(2,2)$, $(3,1)$, and~$(4,0)$ (left to right).}
	\label{fig:multiplihedronFreehedronLabeledOriented4}
\end{figure}
}

\subsection{Hochschild polytopes}
\label{subsec:freehedra}

We now construct the $(m,n)$-Hochschild polytope which realize the $m$-lighted $n$-shade coarsening lattice.
These polytopes are illustrated in \cref{fig:multiplihedronFreehedronLabeledOriented13,fig:multiplihedronFreehedronLabeledOriented3,fig:multiplihedronFreehedronLabeledOriented4}.
Recall that we denote by~$ps(x)$ the preceeding sum of an entry~$x$ in an $m$-lighted $n$-shade (see \cref{def:preorderLightedShade}).

\begin{definition}
\label{def:verticesLightedShades}
Consider a unary $m$-lighted $n$-shade~$\LS \eqdef (S, C, \mu)$ and denote by~$s_1, s_2, \dots, s_r$ the values of the singleton tuples of~$S$ from left to right (\ie downwards in the pictures).
We associate to~$\LS$ a point~$\b{a}(\LS)$ whose $p$th coordinate is
\begin{itemize}
\item if $p \le m$, then the number of cuts plus the sum of the entries~$s_i$ which are weakly below the cut labeled~$p$,
\item if there is~$j \in [r]$ such that~$p = ps(s_j)$, then~$1 + s_j \big( m + n - p + c_p \big) + \binom{s_j}{2}$ where~$c_p$ is the number of cuts below $s_j$,
\item $1$ otherwise.
\end{itemize}
See \cref{fig:verticesLightedShades} for some examples.
\end{definition}

\begin{definition}
\label{def:inequalitiesLightedShades}
We still denote by~$\HH_{m+n}$ the hyperplane of~$\R^{m+n}$ defined by the equality
\[
\dotprod{\b{x}}{\b{1}_{[m+n]}} = \binom{m+n+1}{2}.
\]
Moreover, for each rank~$m+n-2$ $m$-lighted $n$-shade~$\LS \eqdef (S, C, \mu)$, consider the halfspace~$\b{H}(\LS)$ of~$\R^{m+n}$ defined by the inequality
\[
\dotprod{\b{x}}{\one_{A \cup B}} \ge \binom{|A|+|B|+1}{2},
\]
where
\begin{itemize}
\item $A$ denotes the set of elements of~$[m]$ which label the cut of~$C$ not containing the first tuple of~$S$ (hence, $A = \varnothing$ if~$C$ has only one cut, which contains the first tuple of~$S$), 
\item $B = \{m+q\}$ if $S$ is a single tuple with the $2$ in position~$q$, and~$B = \{m+q+1, \dots, m+n\}$ if~$S = (s_1, s_2)$ is a pair of tuples with~$|s_1| = q$.
\end{itemize}
See \cref{fig:inequalitiesLightedShades} for some examples.
\end{definition}

\begin{remark}
\label{rem:removahedron}
The inequalities of \cref{def:inequalitiesLightedShades} form a subset of the inequalities of \cref{def:inequalitiesPaintedTrees}.
\end{remark}

We postpone the proofs of the next three statements to \cref{subsec:proofs}.

\begin{proposition}
\label{prop:VHDescriptionsLightedShades}
The $m$-lighted $n$-shade coarsening lattice is isomorphic to the face lattice of the \defn{$(m,n)$-Hochschild polytope}~$\HP$, defined equivalently as
\begin{enumerate}[(i)]
\item the convex hull of the vertices~$\b{a}(\LS)$ for all unary $m$-lighted $n$-shades~$\LS$,
\item the intersection of the hyperplane~$\HH_{m+n}$ with the halfspaces~$\b{H}(\LS)$ for all rank~$m+n-2$ $m$-lighted $n$-shades~$\LS$.
\end{enumerate}
\end{proposition}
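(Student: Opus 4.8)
The statement asserts that the $(m,n)$-Hochschild polytope~$\HP$, defined by either a vertex list or a halfspace description, has face lattice anti-isomorphic to the $m$-lighted $n$-shade refinement lattice. The plan is to prove this by relating~$\HP$ to the multiplihedron~$\Multiplihedron$, whose analogous statement (\cref{prop:VHDescriptionsPaintedTrees}) is already available, rather than verifying the combinatorics of~$\HP$ from scratch. By \cref{rem:removahedron} the inequalities of \cref{def:inequalitiesLightedShades} form a \emph{subset} of the inequalities of \cref{def:inequalitiesPaintedTrees}, so~$\HP$ is obtained from~$\Multiplihedron$ by deleting certain facet-defining inequalities; this exhibits~$\HP$ as a removahedron of the multiplihedron. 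I would first establish the halfspace description directly, then match it against the vertex description, and finally read off the face lattice via the normal fan.

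First I would verify that the two descriptions in the statement define the same polytope, i.e.\ that the halfspaces~$\b{H}(\LS)$ of part~(ii) are exactly the facet-defining inequalities of the convex hull~$\conv\set{\b{a}(\LS)}{\LS \text{ unary}}$ of part~(i). For this I would compute, for each rank~$m+n-2$ lighted shade~$\LS$, the tight right hand side~$\b{z}_{A \cup B}(\HP) = \min\set{\dotprod{\one_{A \cup B}}{\b{a}(\LS')}}{\LS' \text{ unary}}$ and check it equals~$\binom{|A|+|B|+1}{2}$, with the minimum attained precisely on the shadow-class of vertices lying on the corresponding facet. Because~$\HP$ is meant to be a deformed permutahedron, it is cleaner to pass through the~$\b{y}$/$\b{z}$ parametrization of \cref{subsec:deformedPermutahedra}: I would guess the Minkowski coefficients~$\b{y}_I(\HP)$ (plausibly~$\b{y}_I = 1$ exactly when~$I$ is an initial or final interval of the relevant form, mirroring \cref{rem:YZCoordinatesPaintedTrees}), derive the~$\b{z}_J(\HP)$ by boolean Möbius inversion, and confirm these reproduce both the vertices~$\b{a}(\LS)$ and the right hand sides~$\binom{|A|+|B|+1}{2}$. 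The special shape~$\binom{|A|+|B|+1}{2}$ (a single binomial, rather than the sum-of-binomials-plus-product appearing for the multiplihedron) strongly suggests that deleting the multiplihedron inequalities merges several tree-intervals into one, which is exactly what the shadow map does combinatorially.

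The face-lattice identification I would then obtain from the normal fan. The normal fan of~$\Multiplihedron$ consists of the preposet cones of the~$\preccurlyeq_{\PT}$ (\cref{prop:fanPaintedTrees}); deleting inequalities coarsens this fan, and I would show the coarsened fan is exactly the fan of preposet cones of the~$\preccurlyeq_{\LS}$ for all $m$-lighted $n$-shades~$\LS$ (the analogue of \cref{prop:fanPaintedTrees} for shades, presumably the \cref{prop:fanLightedShades2} already invoked in the proof of \cref{prop:lightedShadeRefinementLattice}). Since the faces of a polytope are anti-isomorphic to its normal fan ordered by inclusion, and the shade preposets are ordered by refinement, this yields the claimed anti-isomorphism with the $m$-lighted $n$-shade refinement lattice. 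The caterpillar-forest structure of~$\preccurlyeq_{\LS}$ from \cref{rem:caterpillar} is what guarantees these cones glue into a genuine fan coarsening~$\c{B}_{m+n}$.

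\textbf{Main obstacle.}
The crux will be showing that deleting \emph{exactly} the inequalities not of the form~$\b{H}(\LS)$ coarsens the braid-refined normal fan to precisely the shade fan, with no collapse in dimension and no spurious merging of cones. Equivalently, one must check that every rank~$m+n-2$ shade inequality remains facet-defining on~$\HP$ (it is not rendered redundant by the deletions) and that the vertices~$\b{a}(\LS)$ are genuinely the vertices (no unary shade gives an interior point, and no two give the same point). I expect the cleanest route is to verify that the shadow map~$\shadow$ induces the fan coarsening: a preposet cone~$C(\preccurlyeq_{\LS})$ should be the union of the cones~$C(\preccurlyeq_{\PT})$ over the shadow fiber of~$\LS$, which ties this geometric statement back to \cref{prop:shadowMapSemilatticeMapRotation}. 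Making that union-of-cones claim precise, and confirming it is a bijection between faces of~$\HP$ and shade preposets, is where the real work lies; the right hand side and vertex computations, while tedious, are routine binomial bookkeeping once the~$\b{y}$-coefficients are correctly guessed.
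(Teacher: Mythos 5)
Your fan-theoretic backbone agrees with the paper: your ``union-of-cones'' claim is exactly \cref{prop:fanLightedShades2}, proved there essentially as you anticipate (caterpillar forests make the cones simplicial, edge contractions give closure under faces, and the shadow map together with completeness of the painted-tree fan gives completeness). The gap is in how you attach the polytope to this fan. Your pivotal step, ``deleting inequalities coarsens this fan,'' is false as a general principle in dimension at least~$3$: removing facet inequalities can create new vertices whose normal cones are not unions of cones of the original fan, so the removahedron property of \cref{rem:removahedron} is a \emph{consequence} of \cref{prop:VHDescriptionsLightedShades}, not a tool for proving it. The paper itself warns against exactly this trap in \cref{exm:badFreehedron}, where deleting inequalities from the permutahedron or the multiplihedron produces removahedra with the wrong combinatorics. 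What your plan lacks is the local-to-global mechanism that replaces this false principle, namely \cref{thm:HohlwegLangeThomas}: once the complete simplicial fan of \cref{prop:fanLightedShades2} is in place, it suffices to check that $\b{a}(\LS)$ lies on the hyperplane of $\b{H}(\LS')$ whenever the ray of~$\LS'$ lies in the cone of~$\LS$ (\cref{lem:intersectionPoint}), and that for each rotation $\LS \to \LS'$ the vector $\b{a}(\LS') - \b{a}(\LS)$ points from the cone of~$\preccurlyeq_{\LS}$ to the cone of~$\preccurlyeq_{\LS'}$ (\cref{lem:goodDirection}). These two finite local checks simultaneously yield that the convex hull equals the halfspace intersection, that its normal fan is the shade fan (hence the lattice anti-isomorphism), and even the oriented-skeleton statement. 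Your substitute --- verifying that each $\binom{|A|+|B|+1}{2}$ is the \emph{global} minimum of the corresponding linear functional over all points $\b{a}(\LS)$, with the minimum attained exactly on the predicted facet, and that the halfspace intersection acquires no extra vertices --- is not ``routine binomial bookkeeping'': the standard way to certify such global minima is local optimality along edges plus connectivity, which is precisely the content of \cref{lem:goodDirection} that you have not supplied.

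A second concrete misstep: your guessed Minkowski coefficients ($\b{y}_I = 1$ exactly for initial and final intervals) describe Saneblidze's freehedron, which the paper is at pains to distinguish from~$\HP[1][n]$; by \cref{exm:badFreehedron}, the freehedron's skeleton oriented by~$\b{\omega}$ is not even a lattice. The correct coefficients, recorded in \cref{rem:YZCoordinatesLightedShades}, are not $0/1$-valued --- final intervals $\{m+j, \dots, m+n\}$ carry coefficient $n-j$, and the coefficient-$1$ sets beyond singletons and pairs in~$[m]$ are of the form $\{i\} \cup \{m+j, \dots, m+n\}$ with $i \in [m]$ --- so the M\"obius-inversion route as you set it up would have led you to verify vertex and facet data for the wrong polytope.
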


\begin{proposition}
\label{prop:fanLightedShades}
The normal fan of the $(m,n)$-Hochschild polytope~$\HP$ is the fan whose cones are the preposet cones of the preposets~$\preccurlyeq_{\LS}$ of all $m$-lighted $n$-shades~$\LS$.
%The preposet cones of the preposets~$\preccurlyeq_{\LS}$ of all $m$-lighted $n$-shades~$\LS$ form a fan, called the \defn{$m$-lighted $n$-shade fan}.
\end{proposition}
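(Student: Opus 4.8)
The plan is to upgrade the purely combinatorial face-lattice statement of \cref{prop:VHDescriptionsLightedShades} to the metric statement about normal cones, leaning on the fact that $\HP$ is a removahedron of $\Multiplihedron$ (\cref{rem:removahedron}). First, since every facet-defining inequality of $\HP$ has normal vector $\one_{A\cup B}$, a $0/1$-vector indexing a ray of the braid fan, the polytope $\HP$ is a deformed permutahedron: its normal fan coarsens the braid fan, so each of its normal cones is automatically a preposet cone $\set{\b x\in\R^{m+n}}{x_i\le x_j\text{ if }i\preccurlyeq j}$ for some preposet $\preccurlyeq$ on $[m+n]$. It then remains only to identify, for each face, the corresponding preposet as $\preccurlyeq_{\LS}$.

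The heart of the argument is the maximal cones. By \cref{prop:VHDescriptionsLightedShades} the vertices of $\HP$ are exactly the points $\b a(\LS)$ for unary $\LS$, so one option is to compute the normal cone at $\b a(\LS)$ directly from the coordinates of \cref{def:verticesLightedShades}: for a generic linear functional $\b c=(c_1,\dots,c_{m+n})$ one reads off which vertex $\b a(\LS)$ maximizes $\b x\mapsto\dotprod{\b c}{\b x}$ and checks that the set of such $\b c$ is precisely the interior of $\set{\b x}{x_i\le x_j\text{ if }i\preccurlyeq_{\LS}j}$. The cleaner route, which also explains the combinatorics, goes through $\Multiplihedron$: since the deleted inequalities should be exactly the walls of $\c N(\Multiplihedron)=\{\,\preccurlyeq_{\PT}\text{-cones}\,\}$ (\cref{prop:fanPaintedTrees}) that separate painted trees sharing a shadow, I would prove the key identity
\[
\set{\b x\in\R^{m+n}}{x_i\le x_j\text{ if }i\preccurlyeq_{\LS}j}=\bigcup_{\shadow(\PT)=\LS}\set{\b x}{x_i\le x_j\text{ if }i\preccurlyeq_{\PT}j},
\]
equivalently $\preccurlyeq_{\LS}=\bigcap_{\shadow(\PT)=\LS}\preccurlyeq_{\PT}$, so that $\c N(\HP)$ is exactly the coarsening of $\c N(\Multiplihedron)$ obtained by merging each shadow fibre into a single cone.

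To pass from maximal cones to the whole fan, I would invoke the face-lattice anti-isomorphism of \cref{prop:VHDescriptionsLightedShades}: a non-maximal face corresponds to a coarser shade $\LS$, its normal cone is the common face of the maximal cones of the unary shades refining $\LS$, and one checks that this intersection is again $\set{\b x}{x_i\le x_j\text{ if }i\preccurlyeq_{\LS}j}$. Here \cref{rem:caterpillar}, stating that the Hasse diagram of $\preccurlyeq_{\LS}$ is a caterpillar forest, makes the face/intersection bookkeeping transparent, and completeness of the resulting fan is inherited from completeness of $\c N(\Multiplihedron)$.

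The main obstacle is the displayed identity, namely that the union of the multiplihedron cones over a shadow fibre is convex and equals the single preposet cone of $\preccurlyeq_{\LS}$. The easy inclusion is $\preccurlyeq_{\LS}\subseteq\preccurlyeq_{\PT}$ for each $\PT$ in the fibre, since the shade only records the right-branch arity data and hence imposes fewer relations; the delicate point is the reverse containment together with convexity of the union, which amounts to showing that the deleted facets are precisely those interior walls and that no vertex of $\Multiplihedron$ survives as a spurious vertex of $\HP$. I expect this to require an explicit combinatorial description of the shadow fibre of a fixed unary $\LS$ --- in the spirit of the minimal-element analysis already used in the proof of \cref{prop:shadowMapSemilatticeMapRotation} --- matched against the removed inequalities of \cref{def:inequalitiesLightedShades}.
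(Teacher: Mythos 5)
Your opening inference is false, and it is load-bearing. Having every facet normal of the form $\one_{A\cup B}$ does \emph{not} make a polytope a deformed permutahedron: whether the normal fan coarsens the braid fan depends on the right-hand sides of the inequalities (a submodularity condition), not merely on their directions. Concretely, in $\HH_4$ the three inequalities $x_2+x_3\le 1$, $x_2+x_4\le 1$, $x_3+x_4\le 1$ (together with generic bounds $x_i\ge -2$ to stay bounded) define a polytope all of whose facet normals are of the form $\pm\one_I$, yet the vertex $(-\tfrac32,\tfrac12,\tfrac12,\tfrac12)$ has normal cone generated by $\one_{23},\one_{24},\one_{34}$, whose interior meets several braid chambers without containing any of them; this cone is not a preposet cone, so the polytope is not a deformed permutahedron. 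This is precisely the danger with removahedra: deleting inequalities from $\Multiplihedron$ could a priori create spurious vertices of $\HP$ whose normal cones are not preposet cones, so the very structure you claim ``automatically'' is part of what has to be proven. A second structural problem is circularity: you take \cref{prop:VHDescriptionsLightedShades} (vertex description and face-lattice anti-isomorphism) as available input, but in the paper that proposition has no independent proof --- it is established \emph{simultaneously} with \cref{prop:fanLightedShades} by one argument, and proving it from scratch is exactly as hard as the statement at hand.

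Your plan then stalls exactly at the steps you defer. The paper never analyses shadow fibres and never needs convexity of a union of cones. Instead, \cref{prop:fanLightedShades2} first shows by purely combinatorial means that the cones of the preposets $\preccurlyeq_{\LS}$ form a complete simplicial fan: each Hasse diagram is a caterpillar forest (\cref{rem:caterpillar}), hence each cone is simplicial; contracting a forest edge produces the preposet of a coarser shade, hence the collection is closed under faces; and comparison with the complete multiplihedron fan via the shadow map gives completeness. Then \cref{thm:HohlwegLangeThomas} converts the assertion ``this fan is the normal fan of the polytope described by the points $\b{a}(\LS)$ and the halfspaces $\b{H}(\LS')$'' into two finite local verifications, carried out in \cref{lem:intersectionPoint} (each vertex lies on the defining hyperplanes of the facet inequalities indexed by the rays of its cone) and \cref{lem:goodDirection} (for each rotation $\LS\to\LS'$, the vector $\b{a}(\LS')-\b{a}(\LS)$ crosses the common wall in the correct direction). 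These two lemmas are precisely the content of your ``delicate point'' (no spurious vertices, and the deleted walls being exactly the interior ones), which your proposal leaves as an expectation rather than a proof. So even granting the faulty first step and the borrowed proposition, the essential verifications are missing; the Hohlweg--Lange--Thomas criterion is the tool that makes them tractable, and some substitute for it would be needed to complete your route.
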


\begin{proposition}
\label{prop:graphLightedShades}
The skeleton of the $(m,n)$-Hochschild polytope~$\HP$ oriented in the direction~$\b{\omega}_{m+n} \eqdef (m+n, \dots, 1) - (1, \dots, m+n)$ is isomorphic to the right rotation digraph on unary $m$-lighted $n$-shades.
\end{proposition}

\begin{remark}
It follows from \cref{rem:rotationGraphLightedShadesRegular} that the $(m,n)$-Hochschild polytope is simple and the $m$-lighted $n$-shade fan is simplicial.
This will simplify our proofs in \cref{subsec:proofs}.
\end{remark}

\begin{remark}
\label{rem:YZCoordinatesLightedShades}
As in \cref{rem:YZCoordinatesPaintedTrees}, one can compute the $\b{y}$ and $\b{z}$ parametrizations of the \mbox{$(m,n)$-Hoch}\-schild polytope~$\HP$.
Namely, for~$I \subseteq [m+n]$, we have
\[
\b{y}_I \big( \HP \big) = 
\begin{cases}
	1 & \text{ if~$|I| = 1$, or~$|I| = 2$ and~$I \subseteq [m]$,} \\
	   & \text{ or~$I = \{i, m+j, m+j+1, \dots, m+n\}$ for some~$i \in [m]$ and~$j \in [n]$} \\
	n-j & \text{ if~$I = \{m+j, m+j+1, \dots, m+n\}$ for some~$j \in [n]$} \\
	0 & \text{ otherwise}
\end{cases}
\]
and
\[
\b{z}_J \big( \HP \big) = \binom{|A|+|C|+1}{2} + |B|,
\]
where~$A \eqdef J \cap [m]$, and~$B \cup C \eqdef J \ssm [m]$ such that~$C$ is the largest interval of~$J \ssm [m]$ containing~$m+n$.
\end{remark}

\begin{remark}
\label{rem:similarities}
As mentioned in the introduction, there are deep similarities between the behaviors~of
\begin{itemize}
\item the permutahedron~$\Perm$ and the associahedron~$\Asso$,
\item the multiplihedron~$\Multiplihedron$ and the Hochschild polytope~$\HP$.
\end{itemize}
We conclude with a few comments on the behavior of the latter for the reader familiar with the behavior of the former:
\begin{itemize}
\item As observed in \cref{rem:removahedron}, the $(m,n)$-Hochschild polytope~$\HP$ can be obtained by deleting inequalities in the facet description of the $(m,n)$-multiplihedron~$\Multiplihedron$.
\item The common facet defining inequalities of~$\Multiplihedron$ and~$\HP$ are precisely those that contain a common vertex of~$\Multiplihedron$ and~$\HP$ (the singletons of \cref{subsec:singletons}).
\item In contrast, the vertex barycenters of the $(m,n)$-multiplihedron~$\Multiplihedron$ and of the \mbox{$(m,n)$-Hochschild} polytope~$\HP$ do not coincide.
\item When~$m = 0$, the $(0,n)$-Hochschild polytope~$\HP[0][n]$ is a skew cube distinct from the parallelepiped obtained by considering the canopy congruence on binary trees (which is a lattice congruence, in contrast to the shadow meet semilattice congruence).
\end{itemize}
\end{remark}

\begin{example}
\label{exm:badFreehedron}
When~$m = 0$, the $(0,n)$-Hochschild polytope is a skew cube.
Note that it is distinct from the parallelotope~$\sum_{i \in [n-1]} [\b{e}_i, \b{e}_{i+1}]$.
When~$m = 1$, the $(1,n)$-Hochschild polytope gives a realization of the Hochschild lattice~\cite{Chapoton-Dyck, Combe, Muhle}.
Note that the unoriented rotation graph on $1$-lighted $n$-shades was already known to be isomorphic to the unoriented skeleton of a deformed permutahedron called \defn{freehedron} and obtained as a truncation of the standard simplex~\cite{Saneblidze}, or more precisely as the Minkowski sum~$\sum_{i \in [n]} \simplex_{\{1, \dots, i\}} + \sum_{i \in [n]} \simplex_{\{i, \dots, n\}}$ of the faces of the standard simplex corresponding to initial and final intervals, see \cref{fig:badFreehedron}.
However, orienting the skeleton of the freehedron in direction~$\b{\omega}_{m+n}$, we obtain a poset different from the Hochschild lattice, and which is not even a lattice.
Indeed, in \cref{fig:badFreehedron}\,(left) the two blue vertices have no join while the two red vertices have no meet.
In fact, the Hasse diagram of the Hochschild lattice cannot be obtained as a Morse orientation given by a linear functional on the freehedron.
Finally, observe that the freehedron cannot be obtained by removing inequalities in the facet description of the permutahedron or of the multiplihedron.
See \cref{fig:badFreehedron}\,(middle and right) where the resulting removahedra have the wrong combinatorics (look at the $4$-valent vertex on the right of the polytopes).
\begin{figure}
	\centerline{
		\includegraphics[scale=.6]{badFreehedronNotLattice}
		\quad
		\includegraphics[scale=.6]{badFreehedronNotRemovahedronPermutahedron}
		\includegraphics[scale=.6]{badFreehedronNotRemovahedronMultiplihedron}
	}
%	\centerline{
%		\input{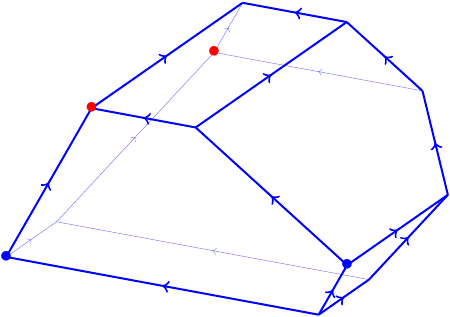}
%		\input{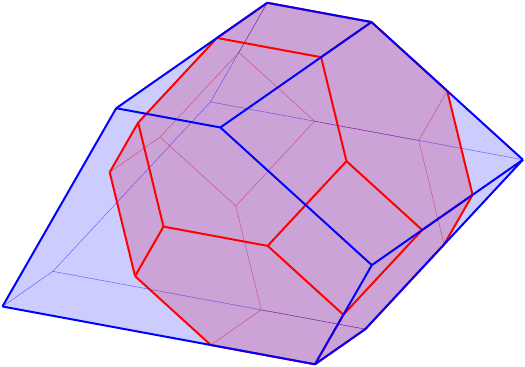}
%		\input{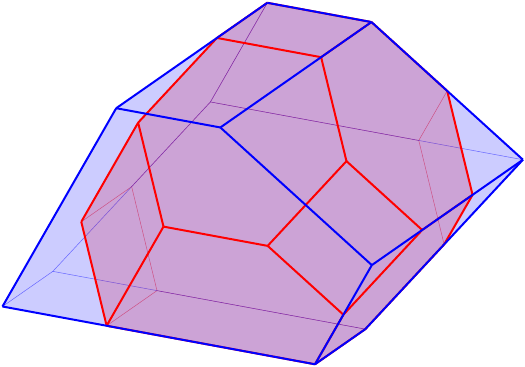}
%	}
	\caption{The freehedron obtained as Minkowski sum of the faces of the standard simplex corresponding to initial or final intervals (left), and failed attempts to obtain it as a removahedron of the permutahedron (middle) or of the multiplihedron (right).}
	\label{fig:badFreehedron}
\end{figure}
\end{example}

%%%%%%%%%

\pagebreak
\subsection{Proof of \cref{prop:VHDescriptionsLightedShades,,prop:fanLightedShades,,prop:graphLightedShades}}
\label{subsec:proofs}

Our proof strategy follows that of~\cite[Sect.~4]{HohlwegLangeThomas}.
First, we will prove that the collection of cones described in \cref{prop:fanLightedShades} indeed defines a fan.

\begin{proposition}
\label{prop:fanLightedShades2}
The preposet cones of the preposets~$\preccurlyeq_{\LS}$ for all $m$-lighted $n$-shades~$\LS$ define a complete simplicial fan realizing the $m$-lighted $n$-shade coarsening lattice.
\end{proposition}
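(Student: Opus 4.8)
The plan is to follow the strategy of~\cite[Sect.~4]{HohlwegLangeThomas}. Throughout, write~$C(\LS)$ for the preposet cone of~$\preccurlyeq_{\LS}$. The first observation is that, by definition of the refinement poset, $\LS \le \LS'$ if and only if~${\preccurlyeq_{\LS}} \supseteq {\preccurlyeq_{\LS'}}$, which is exactly~$C(\LS) \subseteq C(\LS')$. Hence~$\LS \mapsto C(\LS)$ is an order preserving map from the refinement poset to the inclusion poset of preposet cones. Granting that the cones~$C(\LS)$ are precisely the cones of a complete simplicial fan, the proposition follows at once: the map is then a poset isomorphism onto the face poset of the fan, and since any two cones of a fan intersect along a common face (which is thus their meet in the inclusion order), the cone poset --- hence the refinement poset --- is a meet semilattice, with~$C(\LS) \cap C(\LS') = C(\LS \meet \LS')$. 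This also settles the meet semilattice property left open in~\cref{prop:lightedShadeRefinementLattice}.

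Second, I would pin down the local structure of each cone. By~\cref{rem:caterpillar} the Hasse diagram of~$\preccurlyeq_{\LS}$ is a caterpillar forest; since the preposet cone of a poset is simplicial exactly when its Hasse diagram is a forest, every~$C(\LS)$ is simplicial, and counting the equivalence classes of~$\preccurlyeq_{\LS}$ from the caterpillar description gives its codimension in~$\one^\perp$ as~$\rank(\LS) = m - |S| + \|S\|$. In particular the unary $m$-lighted $n$-shades (those of rank~$0$) give the full-dimensional cones, whose~$m+n-1$ facets match the~$m+n-1$ rotations counted in~\cref{rem:rotationGraphLightedShadesRegular}. I would then identify the faces of~$C(\LS)$: a face is obtained by turning some defining inequalities~$x_i \le x_j$ into equalities, that is, by merging consecutive classes of the caterpillar forest, and one checks that each such face is again~$C(\LS')$ for the lighted shade~$\LS'$ produced by one of the coarsening operations of~\cref{rem:refinementLightedShades}, and conversely. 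This shows that the collection~$\set{C(\LS)}{\LS}$ is closed under taking faces, that~$\LS \mapsto C(\LS)$ is a bijection onto it, and that cone containment corresponds exactly to refinement.

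It then remains to show that this collection is a complete fan, which I expect to be the crux. I would first prove that the full-dimensional cones tile~$\one^\perp$: every generic direction (equivalently, every chamber of the braid fan~$\c{B}_{m+n}$, \ie every linear order of~$[m+n]$) refines~$\preccurlyeq_{\LS}$ for exactly one unary~$\LS$, which I would establish by reading off this unique unary shade directly from the order of the coordinates. This yields covering together with pairwise disjoint interiors. To upgrade this tiling of~$\one^\perp$ by simplicial cones into a genuine, face-to-face fan, I would verify the wall-crossing condition: each wall, \ie each codimension-$1$ cone~$C(\LS_1)$ with~$\LS_1$ of rank~$1$, is a facet of exactly two full-dimensional cones, lying on opposite sides of its supporting hyperplane. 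Combinatorially, this amounts to checking that each rank-$1$ shade is refined by exactly two unary shades, namely the two endpoints of the corresponding edge of the rotation digraph, realized by the local moves of~\cref{rem:rotationLightedShades}. Covering, disjoint interiors, and this two-sidedness at every wall together force a complete simplicial fan.

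The main obstacle is precisely this last step: a tiling of space by simplicial cones need not be face-to-face (think of a T-vertex on the sphere), so the wall-crossing bookkeeping --- certifying that the two cones adjacent across each rank-$1$ wall are exactly those given by a single rotation, with no further cone meeting the wall --- is what genuinely needs care. A clean way to organize it, which I would adopt, is to exhibit~$\set{C(\LS)}{\LS}$ as the coarsening of the braid fan~$\c{B}_{m+n}$ obtained by gluing braid chambers according to the unique unary shade they refine: each~$C(\LS)$ is then a convex union of braid cones whose faces are controlled by the second paragraph, which displays the collection as a complete simplicial fan directly.
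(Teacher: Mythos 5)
Your first two paragraphs match the paper's own argument step for step: simpliciality comes from the caterpillar forests of \cref{rem:caterpillar}, closure under faces comes from contracting Hasse edges, which is exactly the coarsening operations of \cref{rem:refinementLightedShades}, and the order-theoretic consequences you draw are fine. The divergence is in the completeness step, and there your proposal has a genuine gap. Note first that the paper does not argue from the braid fan at all: it invokes the shadow map together with the fact, already established in \cite{ChapotonPilaud}, that the preposet cones of all $m$-painted $n$-trees form a complete fan (the normal fan of the multiplihedron, \cref{prop:fanPaintedTrees}); each cone of that fan lies in the cone of its shadow, so the shade cones are exhibited as a coarsening of a complete fan that is combinatorially controlled in every dimension.

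The gap is that the ``clean way'' you ultimately adopt rests on a false principle. Grouping the chambers of a complete simplicial fan into classes whose unions are convex (even simplicial) cones, closing under faces, and checking covering with disjoint interiors does \emph{not} imply that the resulting cones meet face-to-face. Already in the braid fan~$\c{B}_4$: let $A \eqdef \set{\b{x} \in \one^\perp}{x_1 \le x_3,\, x_2 \le x_3,\, x_3 \le x_4}$, which is a simplicial cone equal to the union of the two adjacent chambers $x_1 \le x_2 \le x_3 \le x_4$ and $x_2 \le x_1 \le x_3 \le x_4$, and keep all other $22$ chambers as singleton classes. All the conditions of your last paragraph hold, yet for the chamber $C \eqdef \set{\b{x} \in \one^\perp}{x_1 \le x_2 \le x_4 \le x_3}$ the intersection $A \cap C = \set{\b{x} \in \one^\perp}{x_1 \le x_2 \le x_3 = x_4}$ is a facet of~$C$ but is not a face of~$A$ (the relevant face of~$A$ is the strictly larger cone $\set{\b{x} \in \one^\perp}{x_1 \le x_3,\, x_2 \le x_3,\, x_3 = x_4}$). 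This is precisely the T-vertex phenomenon you flagged yourself; convexity of the glued pieces does not exclude it, so your Option~B is circular and does not discharge the obstacle. The proof therefore falls back on your Option~A, whose crux --- every facet of every full-dimensional cone $C(\LS)$ is a facet of exactly one other full-dimensional cone, lying on the opposite side of its supporting hyperplane --- is exactly the point you describe as ``what genuinely needs care'' but never carry out. That verification is feasible (each rank-$1$ shade is covered by exactly two unary shades via the local moves of \cref{rem:rotationLightedShades}, and the two preposets across the wall differ by reversing a single relation, which puts the two cones on opposite sides), and with it Option~A becomes a complete proof via the standard wall-crossing criterion for complete simplicial fans; alternatively, the short route is the paper's, through the painted-tree fan and the shadow map rather than through the braid fan. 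As written, however, the argument you commit to is unsound and the valid alternative remains a plan.
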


\begin{proof}
By \cref{def:preorderLightedShade}, the Hasse diagram of the preposets~$\preccurlyeq_{\LS}$ of each $m$-lighted $n$-shade~$\LS$ is a tree, so that the corresponding preposet cone is simplicial.
Moreover, contracting any edge in this tree gives rise to the Hasse diagram of the preposet~$\preccurlyeq_{\LS'}$ of an $m$-lighted $n$-shade~$\LS'$ refined by~$\LS$, so that this collection of cones is closed by faces.
Finally, by \cref{prop:shadowPreposets}, the shadow map~$\shadow$ sends any $m$-painted $n$-tree~$\PT$ to the coarsening maximal $m$-lighted $n$-shade such that~${\preccurlyeq_{\PT}} \supseteq {\preccurlyeq_{\LS}}$ , hence such that the preposet cone of~$\preccurlyeq_{\PT}$ is contained in the preposet cone of~$\preccurlyeq_{\LS}$.
Since the preposet cones of the preposets~$\preccurlyeq_{\PT}$ for all $m$-painted $n$-trees~$\PT$ form a complete fan~$\c{F}$, we conclude that the preposet cones of the preposets~$\preccurlyeq_{\LS}$ for all $m$-lighted $n$-shades~$\LS$ also form a complete fan refined by~$\c{F}$.
\end{proof}

Next, we apply the following characterization to realize a complete simplicial fan as the normal fan of a convex polytope. A proof of this statement can be found \eg in~\cite[Theorem~4.1]{HohlwegLangeThomas}.

\begin{theorem}[\protect{\cite[Thm~4.1]{HohlwegLangeThomas}}]
\label{thm:HohlwegLangeThomas}
Consider a complete simplicial fan~$\c{F}$ in~$\R^d$, and choose
\begin{itemize}
\item a point~$\b{a}(\polytope{C})$ for each maximal cone~$\polytope{C}$ of~$\c{F}$ (not necessarily in~$\polytope{C}$),
\item a half-space~$\b{H}(\b{\rho})$ of~$\R^d$ orthogonal to~$\b{\rho}$ and containing the origin for each ray~$\b{\rho}$ of~$\c{F}$,
\end{itemize}
such that $\b{a}(\polytope{C})$ belongs to the hyperplane defining~$\b{H}^=(\b{\rho})$ when~$\b{\rho} \in \polytope{C}$.
Then the following assertions are equivalent:
\begin{itemize}
\item for any two adjacent maximal cones~${\polytope{C}, \polytope{C}'}$~of~$\c{F}$, the vector~$\b{a}(\polytope{C}') - \b{a}(\polytope{C})$ points from~$\polytope{C}$ to~$\polytope{C}'$, 
(meaning that $\dotprod{\b{a}(\polytope{C}') - \b{a}(\polytope{C})}{\b{v}'-\b{v}}$ for any~$\b{v} \in \polytope{C}$ and~$\b{v}' \in \polytope{C}'$),
\item the polytopes
\[
\conv\set{\b{a}(\polytope{C})}{\polytope{C} \text{ maximal cone of } \c{F}}
\qquad\text{ and }\qquad
\bigcap_{\b{\rho} \text{ ray of } \c{F}} \b{H}(\b{\rho})
\]
coincide and their normal fan is~$\c{F}$.
\end{itemize}
\end{theorem}

In the next two lemmas, we check the conditions of application of \cref{thm:HohlwegLangeThomas}.
%To apply this theorem, we start by checking that our point~$\b{a}(\LS)$ of \cref{def:verticesPaintedTrees} is indeed the intersection of the hyperplanes defining the half-spaces~$\b{H}(\LS')$ of \cref{def:inequalitiesPaintedTrees} for all~$\LS'$ refined~by~$\LS$.

\begin{lemma}
\label{lem:intersectionPoint}
For any $m$-lighted $n$-shades~$\LS$ and~$\LS'$, of rank~$0$ and~$m+n-2$ respectively, such that~$\preccurlyeq_{\LS}$ refines~$\preccurlyeq_{\LS'}$, the point~$\b{a}(\LS)$ belongs to the hyperplane defining~$\b{H}(\LS')$.
\end{lemma}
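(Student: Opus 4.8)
The plan is to prove \cref{lem:intersectionPoint} by a direct computation, verifying that the vertex point~$\b{a}(\LS)$ satisfies the defining equality of the halfspace~$\b{H}(\LS')$ with equality. By \cref{def:inequalitiesLightedShades}, the facet~$\b{H}(\LS')$ is defined by
\[
\dotprod{\b{x}}{\one_{A \cup B}} = \binom{|A|+|B|+1}{2},
\]
so I must evaluate~$\sum_{p \in A \cup B} \b{a}(\LS)_p$ and check it equals~$\binom{|A|+|B|+1}{2}$. Here~$A \subseteq [m]$ is the label set of the cut not containing the first tuple of~$\LS'$, and~$B \subseteq [n]^{+m}$ is one of the two interval shapes allowed in \cref{def:inequalitiesLightedShades}.

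\textbf{Case analysis following the shape of~$B$.}
First I would split according to the two possibilities for~$B$ in \cref{def:inequalitiesLightedShades}: either~$S' = (s_1, s_2)$ is a pair of tuples with~$|s_1| = q$, giving~$B = \{m+q+1, \dots, m+n\}$, or~$S'$ is a single tuple with its unique~$2$ in position~$q$, giving~$B = \{m+q\}$. In each case the hypothesis that~$\preccurlyeq_{\LS}$ refines~$\preccurlyeq_{\LS'}$ constrains the rank-$0$ shade~$\LS$: its singletons and cuts must be compatible with the coarser preposet of~$\LS'$. Concretely, the splitting point~$q$ and the cut labeled by~$A$ in~$\LS'$ determine which singletons of~$\LS$ sit in the index range~$A \cup B$ and which cuts lie below them. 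I would read off, from \cref{def:verticesLightedShades}, the three types of coordinates contributing to the sum: the coordinates~$p \le m$ with~$p \in A$ (each counting the number of cuts plus the sum of entries weakly below cut~$p$), the coordinates~$p = ps(s_j) \in B$ (each equal to~$1 + s_j(m+n-p+c_p) + \binom{s_j}{2}$), and the coordinates equal to~$1$.

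\textbf{Assembling the sum.}
The key accounting step is to sum the contributions over~$A \cup B$ and recognize the result as a single triangular number. For the interval~$B = \{m+q+1,\dots,m+n\}$, the indices~$p = ps(s_j)$ landing in~$B$ together with the plain~$1$'s in~$B$ should telescope: the terms~$s_j(m+n-p+c_p)$ summed over the singletons of the second tuple produce, via the identity~$\sum s_j = |B|$ and the arithmetic-progression structure of the~$ps$ values, exactly the cross and square terms needed to complete~$\binom{|A|+|B|+1}{2}$. The cut contributions from~$A$ supply the~$|A|$-dependent part, precisely because each such coordinate counts cuts plus entry-sums weakly below it. I expect the main obstacle to be bookkeeping the quantity~$c_p$ (the number of cuts below~$s_j$) and the offset~$m+n-p$ correctly against the positions forced by the refinement hypothesis, so that the two contributions from~$A$ and from~$B$ combine without double-counting into the cross term~$|A|\cdot|B|$ implicit in~$\binom{|A|+|B|+1}{2} = \binom{|A|+1}{2} + \binom{|B|+1}{2} + |A|\cdot|B|$. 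Once this identity is verified in both cases for~$B$, the lemma follows, and together with \cref{lem:intersectionPoint}'s companion lemma it will set up the application of \cref{thm:HohlwegLangeThomas}.
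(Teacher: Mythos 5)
Your proposal is correct and takes essentially the same route as the paper's proof: a direct coordinate computation with the same two-case split on the shape of~$B$ from \cref{def:inequalitiesLightedShades}, in which the cut-counting contributions of the $A$-coordinates and the $c_p$ terms of the $B$-coordinates cancel into the cross term~$|A|\cdot|B|$, while the remaining terms telescope to~$\binom{|B|+1}{2}$, giving~$\binom{|A|+|B|+1}{2}$. The paper simply carries out the bookkeeping you outline (noting in the singleton case~$B = \{m+q\}$ that the refinement hypothesis forces~$\b{a}(\LS)_{m+q} = 1$), so nothing essential is missing.
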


\begin{proof}
Denote by~$s_1, \dots, s_r$ the values of the singleton tuples of~$\LS$.
We distinguish two cases:
\begin{itemize}
\item Assume first that~$\LS'$ contains a single tuple with the $2$ in position~$q$, so that~$A = \varnothing$ and~$B = \{m+q\}$ in \cref{def:inequalitiesPaintedTrees}.
Since~$\LS$ refines~$\LS'$, there is no~$j$ so that~$m+q = ps(s_j)$, so that~$\b{a}(\LS)_{m+q} = 1$ in \cref{def:verticesPaintedTrees}.
We conclude that
\[
\dotprod{\b{a}(\LS)}{\one_{A \cup B}} = \b{a}(\LS)_{m+q} = 1 = \binom{|A|+|B|+1}{2}.
\]
\item Assume now that~$\LS'$ is a pair of tuples~$(s'_1,s'_2)$ with~$|s'_1| = q$, so that~$A \subseteq [m]$ are the labels of the cut containing~$s'_2$, and~$B = \{m+q+1, \dots, m+n\}$ in \cref{def:inequalitiesPaintedTrees}.
Since~$\LS$ refines~$\LS'$, there is~$j$ such that~$q = ps(s_j)$.
We conclude that
\begin{align*}
\dotprod{\b{a}(\LS)}{\one_{A \cup B}} 
& = \binom{|A|+1}{2} + |A||B| + \sum_{i = j+1}^r \big( s_i - 1+ s_i \big( n-ps(s_i) \big) + \textstyle\binom{s_i}{2} \big) \\
& = \binom{|A|+1}{2} + |A||B| + \binom{|B|+1}{2} = \binom{|A|+|B|+1}{2}.
\qedhere
\end{align*}
\end{itemize}
\end{proof}

We now check that for a rotation sending~$\LS$ to~$\LS'$, the direction between the two points~$\b{a}(\LS)$ and~$\b{a}(\LS')$ of \cref{def:verticesPaintedTrees} points from the poset cone~$\preccurlyeq_{\LS}$ to the poset cone~$\preccurlyeq_{\LS'}$ of \cref{def:preorderLightedShade}.

\begin{lemma}
\label{lem:goodDirection}
For any unary $m$-lighted $n$-shades~$\LS$ and~$\LS'$ related by a rotation, the vector~${\b{a}(\LS') - \b{a}(\LS)}$ points from the poset cone~$\preccurlyeq_{\LS}$ to the poset cone~$\preccurlyeq_{\LS'}$.
\end{lemma}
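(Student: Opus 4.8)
The plan is to reduce the statement to a single scalar inequality for each of the three rotation types of \cref{rem:rotationLightedShades}. First I would make precise what ``points from the cone~$\preccurlyeq_{\LS}$ to the cone~$\preccurlyeq_{\LS'}$'' means. Since~$\LS$ and~$\LS'$ are related by a rotation, there is a pair~$1 \le i < j \le m+n$ with~$i \preccurlyeq_{\LS} j$ and~$j \preccurlyeq_{\LS'} i$, and the two full-dimensional cones meet along the wall carried by the hyperplane~$\set{\b{x}}{x_i = x_j}$, with~$\preccurlyeq_{\LS}$ on the side~$x_i \le x_j$ and~$\preccurlyeq_{\LS'}$ on the side~$x_i \ge x_j$. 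Hence the normal of this wall pointing towards~$\preccurlyeq_{\LS'}$ is~$\b{e}_i - \b{e}_j$, and the claim is equivalent to the single inequality~$\dotprod{\b{e}_i - \b{e}_j}{\b{a}(\LS') - \b{a}(\LS)} > 0$, that is, to~$\b{a}(\LS')_i - \b{a}(\LS')_j > \b{a}(\LS)_i - \b{a}(\LS)_j$. As all coordinates other than~$i$ and~$j$ are unaffected by the rotation, only a local computation from \cref{def:verticesLightedShades} survives.

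Next I would run through the three rotation types, identifying~$(i,j)$ and evaluating the two relevant coordinates. For type~(iii) (exchanging two adjacent cut labels~$b < a$ with no singleton between them), one takes~$(i,j) = (b,a)$; only the ``number of cuts weakly below'' term changes, and one checks that~$\b{a}(\LS')_b - \b{a}(\LS')_a = 1$ while~$\b{a}(\LS)_b - \b{a}(\LS)_a = -1$, giving difference~$2 > 0$. For type~(ii) (exchanging a singleton of value~$s$ with a cut labeled~$c$ below it), the flipped pair is~$i = c$ and~$j = ps(s)$; moving the cut adds~$+s$ to the cut coordinate (the entry~$s$ enters its weak-below sum) and~$-s$ to the node coordinate (one fewer cut lies below~$s$), giving~$2s > 0$. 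For type~(i) (splitting a singleton~$(r)$ into~$(y),(z)$ with~$r = y+z$), the flipped pair is~$i = ps(y)$ and~$j = ps(r) = ps(z)$; writing~$p = ps(r)$ and~$c_p$ for the number of cuts below the singleton, the leaf/node values from \cref{def:verticesLightedShades} together with the identity~$\binom{y+z}{2} = \binom{y}{2} + \binom{z}{2} + yz$ collapse the inequality to~$2y(m+n-p+c_p) + 2yz + y(y-1) > 0$, which is manifestly positive since~$p \le m+n$ and~$y,z \ge 1$.

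The main obstacle is the bookkeeping in type~(i): one must correctly locate the relation that flips (it is the relation between the two node positions~$ps(y)$ and~$ps(z)$, not one involving the newly created leaves), verify that the preceding sums and the counts~$c_p$ of cuts below are left unchanged by the split, and carry the quadratic~$\binom{\cdot}{2}$ terms through the cancellation. A second, more conceptual subtlety appears for singletons of value~$\ge 2$ in type~(ii): the full relations~$\preccurlyeq_{\LS}$ and~$\preccurlyeq_{\LS'}$ differ by more than the single pair~$(c, ps(s))$, since the leaves owned by the singleton also change their relation to~$c$; nevertheless the two cones still meet along the single wall~$\set{\b{x}}{x_c = x_{ps(s)}}$, because the extra relations are automatically satisfied there. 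I would record that this is precisely the cone adjacency guaranteed by \cref{prop:fanLightedShades2}, so that the wall is correctly identified and the scalar test above is the right one. Combined with \cref{lem:intersectionPoint}, this checks both hypotheses of \cref{thm:HohlwegLangeThomas}.
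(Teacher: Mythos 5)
Your proposal is correct and takes essentially the same route as the paper: a case analysis over the three rotation types of \cref{rem:rotationLightedShades}, computing from \cref{def:verticesLightedShades} how the two affected coordinates change and checking the sign against the flipped relation $i \prec_{\LS} j$ versus $j \prec_{\LS'} i$. Your explicit coefficients agree with the paper's computations (in type~(ii) you even recover the factor~$s$ that the paper's displayed formula $\b{e}_c - \b{e}_p$ omits, and your remark that for $s \ge 2$ the preposets differ in more than one pair yet still share the wall $\{x_c = x_{ps(s)}\}$ makes explicit a point the paper leaves implicit), so the two proofs coincide in substance.
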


\begin{proof}
We distinguish three cases according to~\cref{def:rotationLightedShades}.
Namely, if we obtain~$\LS'$ from~$\LS$ by:
\begin{enumerate}[(i)]
\item replacing a singleton~$(r)$ by two singletons~$(s), (t)$ with~$r = s + t$, then
\[
\b{a}(\LS') - \b{a}(\LS) = \big( s(m+n-p+t+c_p) + \textstyle\binom{s}{2} \big)(\b{e}_{p-t} - \b{e}_p),
\]
and we have~$p-t \prec_{\LS} p$ while~$p \prec_{\LS'} p-t$, where~$p \eqdef ps(r)$ is the preceeding sum of~$r$ in~$\LS$.
\item exchanging a singleton~$(s)$ with a cut~$c$ (with~$(s)$ above~$c$ in~$\LS$), then~$\b{a}(\LS') - \b{a}(\LS) = \b{e}_c - \b{e}_p$ and we have~$c \prec_{\LS} p$ while~$p \prec_{\LS'} c$, where~$p \eqdef ps(s)$.
\item exchanging the labels of two consecutive cuts~$c,c'$ with no singleton in between them (with~$c$ above~$c'$ in~$\LS$), then ${\b{a}(\LS') - \b{a}(\LS) = \b{e}_{c'} - \b{e}_{c}}$ and we have~$c' \prec_{\LS} c$ while~$c \prec_{\LS'} c'$.
\end{enumerate}
In all cases, the vector ${\b{a}(\LS') - \b{a}(\LS)}$ points from the poset cone~$\preccurlyeq_{\LS}$ to the poset cone~$\preccurlyeq_{\LS'}$.
\end{proof}

\begin{proof}[Proof of \cref{prop:VHDescriptionsLightedShades,,prop:fanLightedShades,,prop:graphLightedShades}]
We have seen in \cref{prop:fanLightedShades2} that the preposet cones of the preposets~$\preccurlyeq_{\LS}$ for all $m$-lighted $n$-shades~$\LS$ define a complete simplicial fan.
By \cref{thm:HohlwegLangeThomas}, whose conditions of application are checked in \cref{lem:intersectionPoint,lem:goodDirection}, we thus obtain \cref{prop:VHDescriptionsLightedShades,prop:fanLightedShades}.
Finally, \cref{prop:graphLightedShades} is a direct consequence of \cref{lem:goodDirection}, since~$\dotprod{\b{a}(\LS') - \b{a}(\LS)}{\b{\omega}_{m+n}} > 0$ for~$\LS$ and~$\LS'$ related by a right rotation.
\end{proof}

%%%%%%%%%%%%%%%%%%%%%%%%%%%%%%%%%%%%%%

\section{Cubic realizations}
\label{sec:cubicRealizations}

In this section we give an alternative description of the $m$-painted $n$-tree and $m$-lighted $n$-shade rotation lattices, generalizing the triword description of the Hochschild lattice~\cite{Chapoton-Dyck, Combe, Muhle}. 
We also construct the cubic subdivisions realizing the face poset of the \mbox{$(m,n)$-multipli}\-hedron and of the $(m,n)$-Hochschild polytope, generalizing the original construction of \cite{Saneblidze, RiveraSaneblidze}.
We first fix our conventions and give examples of cubic realizations (\cref{subsec:cubicRealizations}), then recall the cubic $(m,n)$-multiplihedron (\cref{subsec:cubicMultiplihedron}) and finally construct the cubic $(m,n)$-Hochschild polytope (\cref{subsec:cubicHochschildPolytope}).

%%%%%%%%%

\subsection{Cubic realizations of posets}
\label{subsec:cubicRealizations}

We first propose formal definitions of two types of cubic realizations of posets.
\cref{def:subcubePoset} is the cubic analogue of the face lattice of a polytope while \cref{def:cubicRealization} is the cubic analogue of the oriented skeleton of a polytope.
These definitions are illustrated in \cref{exm:LehmerCode,exm:bracketVector} with the permutahedron and the associahedron.

\begin{definition}
\label{def:subcubePoset}
We call \defn{cube} any axis aligned parallelepiped in~$\R^d$.
If~$\b{x}, \b{y} \in \R^d$ are such that~$x_i \le y_i$ for all~$i \in [d]$, we denote by~$\cube(\b{x},\b{y})$ the cube~$\prod_{i \in [d]} [x_i,y_i]$.
A \defn{subcube} of~$C$ is a cube included in~$C$ whose vertices all lie on the boundary of~$C$.
A \defn{cubic subdivision} of~$C$ is a collection~$\c{D}$ of subcubes of~$C$ such that
\begin{itemize}
\item The boundary of~$C$ is the union of all the subcubes of~$\c{D}$,
\item for any subcubes~$C', C'' \in \c{D}$, the intersection~$C' \cap C''$ is either empty, or~$C$ or~$C'$ themselves, or a subcube of~$\c{D}$ with~$\dim(C' \cap C'') < \min(\dim(C), \min(C'))$.
\end{itemize}
The \defn{subcube poset} of the cubic subdivision~$\c{D}$ is the poset on~$\c{D} \cup \{C\}$ ordered by inclusion.
%A cubic subdivision \defn{realizes} a poset $P$ if its subcube poset is isomorphic to $P$.
\end{definition}

\begin{definition}
\label{def:cubicRealization}
A \defn{cubic realization} of a poset~$P$ is a map~$\gamma : P \to \R^d$ such that
\begin{itemize}
\item for any cover relation~$p \lessdot q$ in~$P$, the difference~$\gamma(p) - \gamma(q)$ is a positive multiple of some basis vector~$\b{e}_i$,
\item $\gamma(P)$ lies on the boundary of $\cube \big( \gamma(\min(P)), \gamma(\max(P)) \big)$.
\end{itemize}
\end{definition}

Note that our conventions are slightly unusual: we require that the cubic coordinates are decreasing along the poset, so that the maximum of the poset~$P$ has minimal cubic coordinates.
Our choice is driven by the fact that we want our cubic coordinates for $1$-lighted $n$-shades to coincide with the triwords of~\cite{Saneblidze, RiveraSaneblidze, Chapoton-Dyck, Combe, Muhle}.
We next illustrate these two notions of cubic realizations with the Lehmer code of a permutation and the bracket vector of a binary tree (or we should say adaptations of them, in order to stick with our conventions).
The fact that these indeed provide cubic realizations was established in \cite{SaneblidzeUmble-diagonals} with slightly different conventions, and will be generalized and proved with our conventions in \cref{prop:cubicalRealizationsMultiplihedra}.

\begin{figure}
	\centerline{\includegraphics[scale=.9]{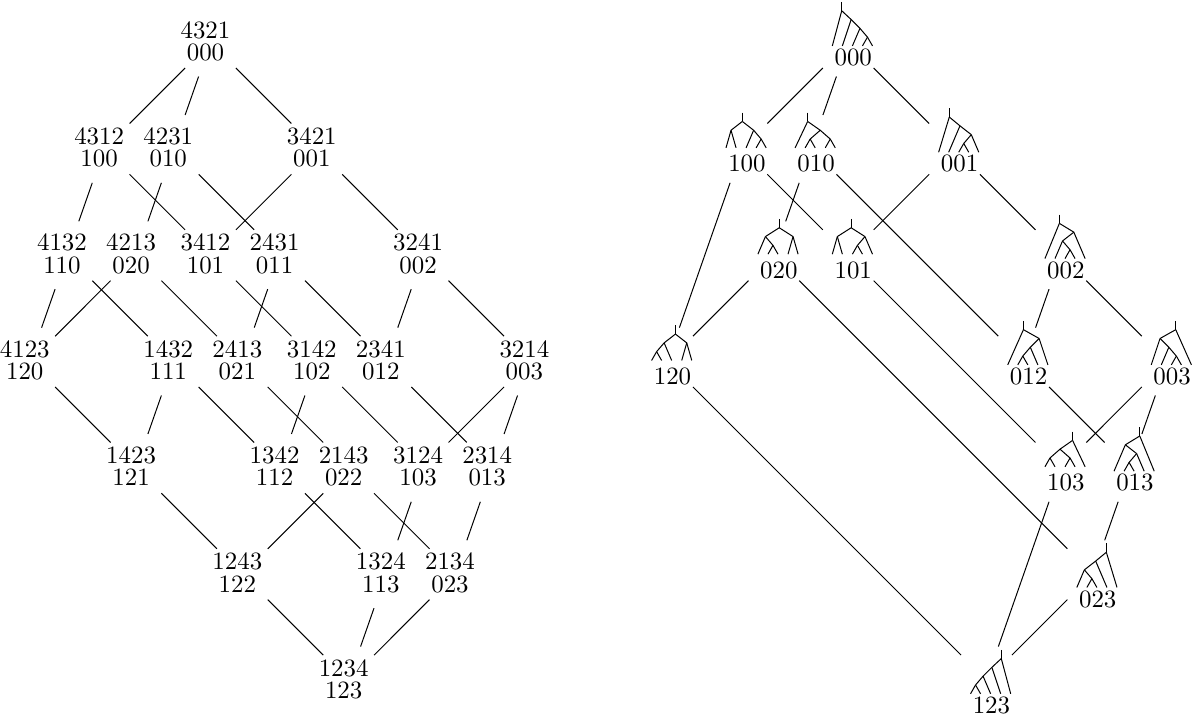}}
%	\centerline{\input{cubicRealizations}}
	\caption{Cubic realizations of the weak order via the Lehmer codes of permutations (left) and of the Tamari lattice via the bracket vectors of binary trees (right).}
	\label{fig:cubicRealizations}
\end{figure}

\pagebreak
\begin{example}
\label{exm:LehmerCode}
The \defn{Lehmer code}~\cite{Lehmer} of a permutation~$\sigma$ of~$[m]$ is the vector~$\b{L}(\sigma) \eqdef \smash{\big( L_j(\sigma) \big)_{j \in [m]}}$ where $L_j(\sigma) = \# \set{i < j}{\sigma^{-1}(i) < \sigma^{-1}(j)}$.
Note that ${L_j(\sigma) \in \{0, \dots, j-1\}}$, so that it is standard to forget the first coordinate (which is always~$0$). See \cref{fig:cubicRealizations}\,(left) for illustration.
\end{example}

The following proposition is due to S.~Saneblidze and R.~Umble~\cite[Sect.~2]{SaneblidzeUmble-diagonals}, but we reproduce the proof for convenience.

\begin{proposition}
\label{prop:LehmerCode}
The Lehmer codes of the permutations of~$[m]$ belong to the boundary of the cube~$[0,1] \times [0,2] \times \dots \times [0,m-1]$ and define
\begin{itemize}
\item a cubic realization of the weak order on the permutations of~$[m]$,
\item a cubic subdivision~$\set{\cube \big( \b{L}(\sigma), \b{L}(\tau) \big)}{\text{$\sigma \le \tau$ defining a face of~$\Perm[m]$}}$ whose subcube poset is isomorphic to the face lattice of the permutahedron~$\Perm[m]$.
\end{itemize} 
\end{proposition}

\begin{proof}
The proof works by induction on~$m$. The case $\Perm [1]$ is trivial. Assume that we have constructed the cubic subdivision~$\c{D}_{m-1}$ of~$[0,1] \times [0,2] \times \dots \times [0, m-2]$ for $\Perm[m-1]$.

Let~$\pi \eqdef \pi_1|\dots|\pi_k$ be an ordered partition of~$[m]$.
Let~$i \in [k]$ be such that~$m \in \pi_i$ and let~$p_\pi \eqdef |\pi_1| + \dots + |\pi_{i-1}|$ and~$q_\pi \eqdef |\pi_1| + \dots + |\pi_i| - 1$.
Let~$\pi'$ denote the ordered partition of~$[m-1]$ obtained by deleting~$m$ from~$\pi$ (and removing the potential empty part if~$m$ was alone in its part), and let~$C_{\pi'}$ denote the cube of~$\c{D}_{m-1}$ corresponding to~$\pi'$.
Finally, define~$C_\pi \eqdef C_{\pi'} \times [p_\pi, q_\pi]$.

We claim that the set~$\c{D}_m \eqdef \set{C_\pi}{\pi \text{ ordered partition of } [m]}$ defines a cubic subdivision of the cube~$[0,1] \times [0,2] \times \dots \times [0,m-1]$.
Indeed, we just need to prove by induction that the coarsening of ordered partitions corresponds to the inclusion of the corresponding cubes.
Let $\mu$ and~$\nu$ be two ordered partitions of~$[m]$ and $\mu'$ and~$\nu'$ be the ordered partitions of~$[m-1]$ obtained by deleting~$m$.
If~$\mu$ coarsens~$\nu$, then~$\mu'$ (weakly) coarsens~$\nu'$ so that~$C_{\mu'} \supseteq C_{\nu'}$ by induction, and~$[p_\mu, q_\mu] \supseteq [p_\nu, q_\nu]$, so that~$C_\mu \supseteq C_\nu$.

Finally, we observe by induction that for a permutation~$\sigma$, the  $0$-dimensional cube~$C_\sigma$ is at coordinate given by the Lehmer code~$\b{L}(\sigma)$.
\end{proof}

\begin{example}
\label{exm:bracketVector}
The \defn{bracket vector}~\cite{HuangTamari} of a binary $n$-tree~$T$ is the vector~$\b{B}(T) \eqdef \smash{\big( B_j(T) \big)_{j \in [n]}}$ where~$B_j(T)$ is the number of descendants of~$j$ which are smaller than~$j$ (for the usual inorder labeling of~$T$).
Equivalently, $B_j(T)$ is the number of leaves minus $1$ in the left subtree of~$j$.
Note that $B_j(\sigma) \in \{0, \dots, j-1\}$, so that it is standard to forget the first coordinate (which is always~$0$). See \cref{fig:cubicRealizations}\,(right) for illustration.   
\end{example}

The following proposition is again due to S.~Saneblidze and R.~Umble~\cite[Sect.~5]{SaneblidzeUmble-diagonals}, and we again reproduce the proof for convenience. The proof is very similar to that of \cref{prop:LehmerCode}.

\begin{proposition}
\label{prop:bracketVector}
The bracket vectors of the binary $n$-trees belong to the boundary of the cube~$[0,1] \times [0,2] \times \dots \times [0,n-1]$ and define
\begin{itemize}
\item a cubic realization of the Tamari lattice on the binary $n$-trees,
\item a cubic subdivision~$\set{\cube \big( \b{B}(S), \b{B}(T) \big)}{\text{$S \le T$ defining a face of~$\Asso[n]$}}$ whose subcube poset is isomorphic to the face lattice of the associahedron~$\Asso[n]$.
\end{itemize}
\end{proposition}

\begin{proof}
The proof works by induction on~$n$. The case $\Asso [1]$ is trivial. Assume that we have constructed the cubic subdivision~$\c{D}_{n-1}$ of~$[0,1] \times [0,2] \times \dots \times [0, n-2]$ for $\Asso[n-1]$.

Recall that the faces of the associahedron~$\Asso[n]$ are labeled by \defn{Schr\"oder $n$-trees}, that is, rooted plane trees with $n+1$ leaves where each node has at least two children. The inclusion of faces of the associahedron then corresponds to the edge contraction on Schr\"oder $n$-trees.

Let~$T$ be a Schr\"oder $n$-tree.
Let $s$ be the parent of the rightmost leaf. Let $p_T+1$ be the number of leaves on the second from the right branch growing from $s$, and let $q_T+1$ be the number of leaves on all the branches growing from $s$ except on the rightmost branch. Let~$T'$ denote the Schr\"oder $(n-1)$-tree obtained by deleting the rightmost leaf from~$T$ (and removing the node $s$ if it becomes unary after the deletion), and let~$C_{T'}$ denote the cube of~$\c{D}_{n-1}$ corresponding to~$T'$.
Finally, define~$C_T \eqdef C_{T'} \times [p_T, q_T]$.

We claim that the set~$\c{D}_n \eqdef \set{C_T}{T \text{ Schr\"oder $n$-tree}}$ defines a cubic subdivision of the cube $[0,1] \times [0,2] \times \dots \times [0,n-1]$.
Indeed, we just need to prove by induction that the contraction of inner edges in Schr\"oder trees corresponds to the inclusion of the corresponding cubes.
Let $S$ and~$T$ be two Schr\"oder $n$-trees, and let $S'$ and~$T'$ be the Schr\"oder $(n-1)$-trees obtained by deleting their rightmost leaves.
If~$S$ is obtained from~$T$ by contracting some set of its inner edges, then the same holds for~$S'$ and~$T'$ so that~$C_{S'} \supseteq C_{T'}$ by induction, and~$[p_S, q_S] \supseteq [p_T, q_T]$, so that~$C_S \supseteq C_T$.

Finally, we observe by induction that for a binary tree~$T$, the $0$-dimensional cube~$C_T$ is at coordinate given by the bracket vector $\b{B}(T)$.
\end{proof}

%%%%%%%%%

\subsection{Cubic $(m,n)$-multiplihedron}
\label{subsec:cubicMultiplihedron}

We now briefly present the cubic realizations of the $m$-painted $n$-tree coarsening lattice and rotation lattice.
These are a mixture of the Lehmer codes of the permutations of~$[m]$ and the bracket vectors of the binary $n$-trees.
The case~$m = 1$ was already discussed in \cite[above Figure 11]{SaneblidzeUmble-diagonals}.
It is convenient to use the poset~$\prec_{\PT}$ of~\cref{def:preorderPaintedTree} to define the cubic vector of~$\PT$.

\begin{definition}
The \defn{cubic vector} of a binary $m$-painted $n$-tree~$\PT$ is the vector $\b{C}(\PT) \eqdef \smash{\big( C_j(\PT) \big)_{j \in [m+n]}}$ where~$C_j(\PT) \eqdef \# \set{i < j}{i \prec_{\PT} j}$.
Note that $C_j(\PT) \in \{0, \dots, j-1\}$, so that it is standard to forget the first coordinate (which is always~$0$).
\end{definition}

\begin{example}
Observe that 
\begin{itemize}
\item when~$n = 0$, we have the Lehmer code of a permutation presented in \cref{exm:LehmerCode}, 
\item when~$m = 0$, we have the bracket vector of a binary tree presented in \cref{exm:bracketVector}.
\end{itemize}
\end{example}

The following statement is illustrated in \cref{fig:multiplihedronFreehedronLabeledCubic13,fig:multiplihedronFreehedronLabeledCubic3,fig:multiplihedronFreehedronLabeledCubic4}\,(top).
We skip its proof as it is a straightforward generalization of that of \cref{prop:LehmerCode,prop:bracketVector}.

\pagebreak
\begin{proposition}
\label{prop:cubicalRealizationsMultiplihedra}
The cubic vectors of $m$-painted $n$-trees belong to the boundary of the cube~$[0,1] \times [0,2] \times \dots \times [0,m+n-1]$ and define
\begin{itemize}
\item a cubic realization of the right rotation lattice on $m$-painted $n$-trees,
\item a cubic subdivision~$\set{\cube \big( \b{C}(\PT), \b{C}(\PT') \big)}{\text{$\PT \le \PT'$ defining a face of~$\Multiplihedron$}}$ whose subcube poset is isomorphic to the face lattice of the $(m,n)$-multiplihedron~$\Multiplihedron$.
\end{itemize}
\end{proposition}

%%%%%%%%%

\begin{figure}
	\centerline{\includegraphics[scale=.9]{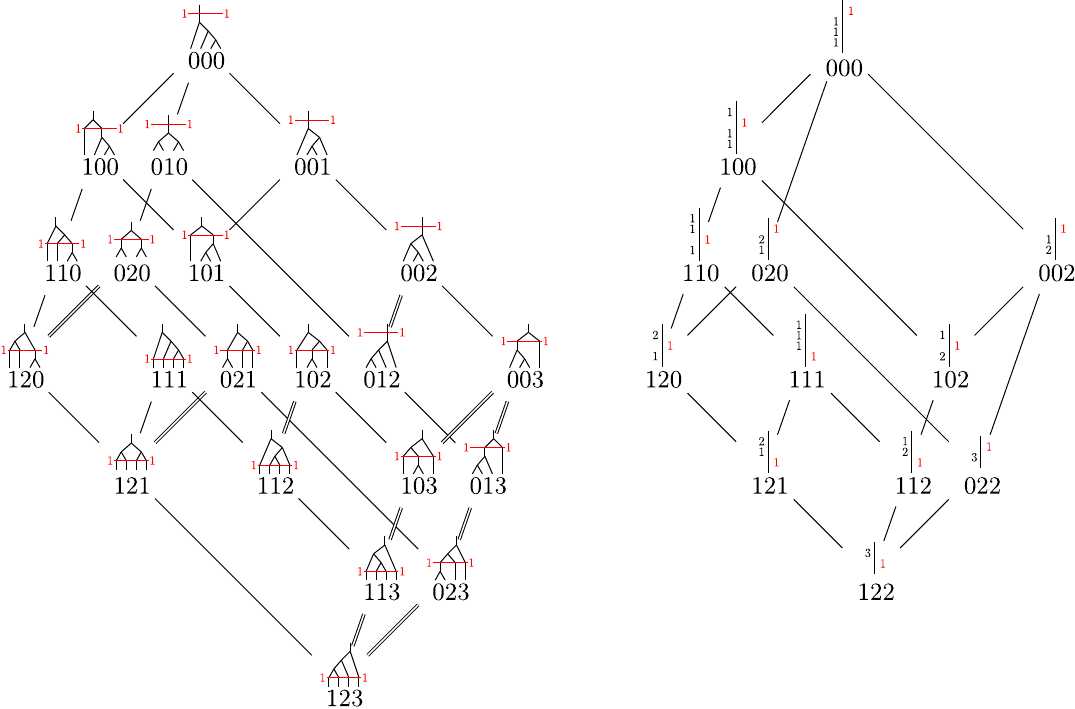}}
%	\centerline{\input{multiplihedronFreehedronLabeledCubicDoubled13}}
	\caption{Cubic realizations of the $1$-painted $3$-tree (left) and $1$-lighted $3$-shade (right) rotation lattices.}
	\label{fig:multiplihedronFreehedronLabeledCubic13}
\end{figure}

\afterpage{
\begin{figure}
	\vspace{.5cm}
	\centerline{\includegraphics[scale=.9]{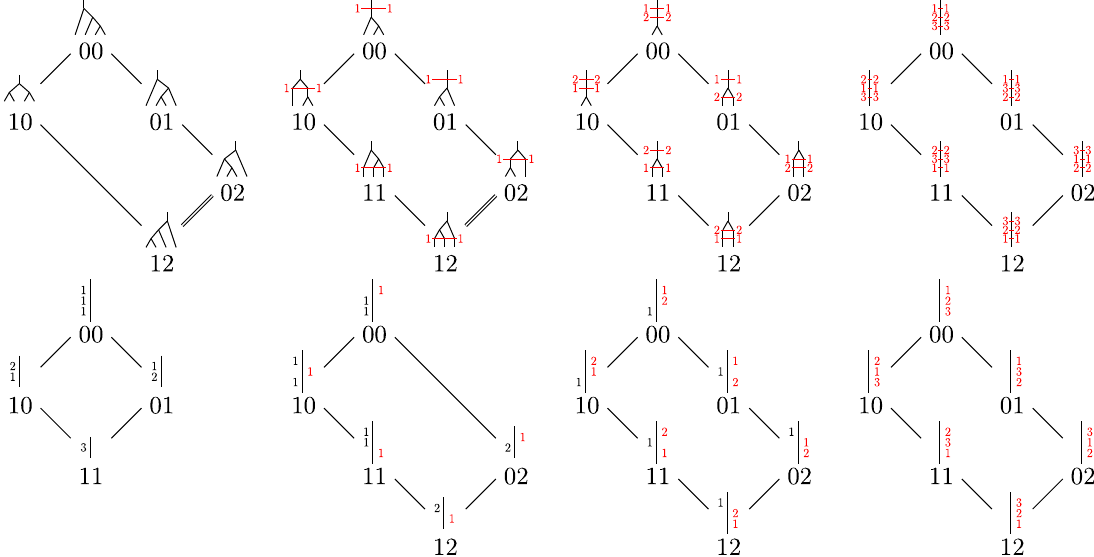}}
%	\centerline{\input{multiplihedronFreehedronLabeledCubicDoubled3}}
	\caption{Cubic realizations of the $m$-painted $n$-tree rotation lattice (top) and the $m$-lighted $n$-shade rotation lattice (bottom) for $(m,n) = (0,3)$, $(1,2)$, $(2,1)$, and~$(3,0)$ (left to right).}
	\label{fig:multiplihedronFreehedronLabeledCubic3}
\end{figure}
}

\afterpage{
\begin{figure}
	\vspace{.7cm}
	\centerline{\includegraphics[scale=.9]{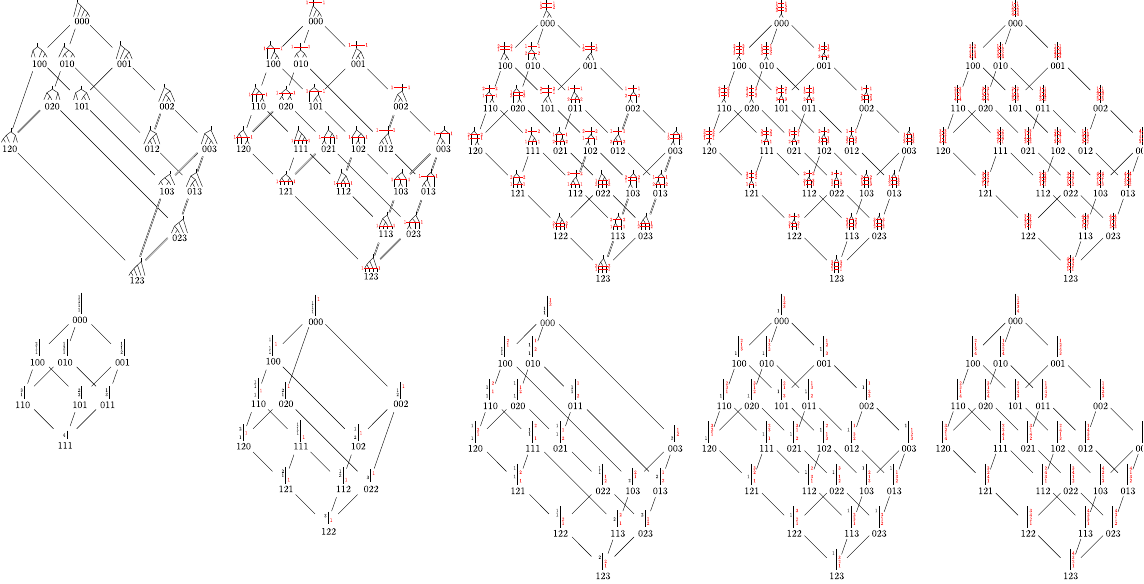}}
%	\centerline{\scalebox{.45}{\input{multiplihedronFreehedronLabeledCubicDoubled4}}}
	\caption{Cubic realizations of the $m$-painted $n$-tree rotation lattice (top) and the $m$-lighted $n$-shade rotation lattice (bottom) for $(m,n) = (0,4)$, $(1,3)$, $(2,2)$, $(3,1)$, and~$(4,0)$ (left to right).}
	\label{fig:multiplihedronFreehedronLabeledCubic4}
\end{figure}
}

%%%%%%%%%

%\begin{figure}[h]
%	\centerline{\includegraphics[scale=.9]{multiplihedronFreehedronLabeledCubic04}}}
%%	\centerline{\input{multiplihedronFreehedronLabeledCubic04}}
%	\caption{Cubic realizations of the $0$-painted $4$-tree (left) and $0$-lighted $4$-shade (right) rotation lattices.}
%\end{figure}

%\begin{figure}[h]
%	\centerline{\includegraphics[scale=.9]{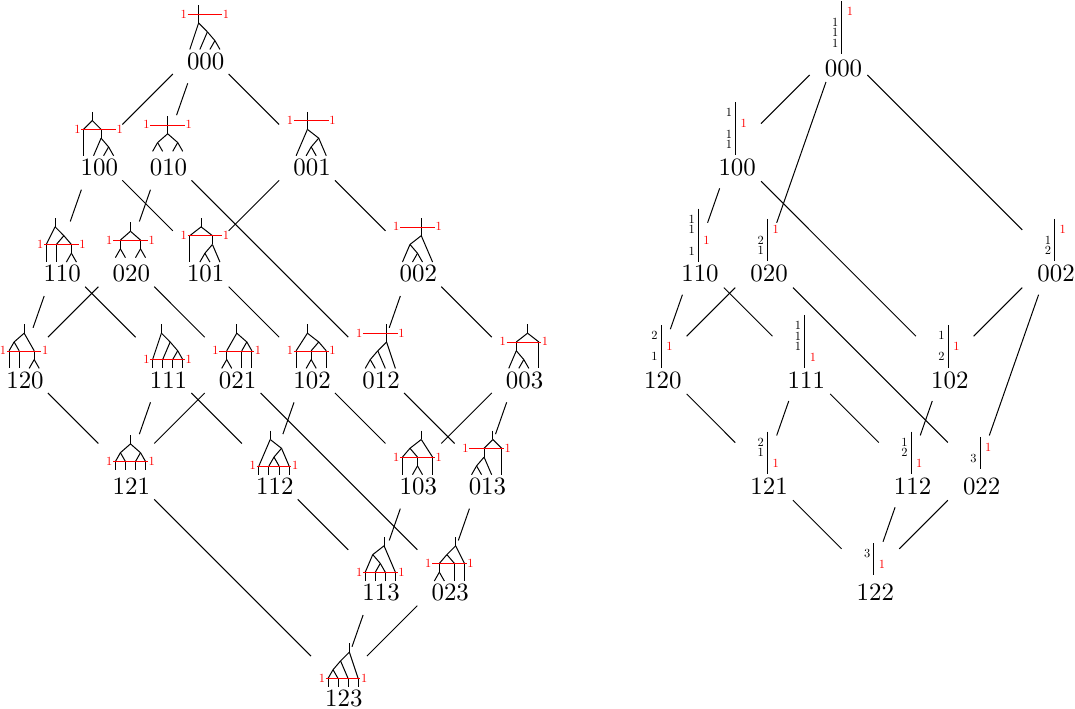}}}
%%	\centerline{\input{multiplihedronFreehedronLabeledCubic13}}
%	\caption{Cubic realizations of the $1$-painted $3$-tree (left) and $1$-lighted $3$-shade (right) rotation lattices.}
%\end{figure}

%\begin{figure}[h]
%	\centerline{\includegraphics[scale=.9]{multiplihedronFreehedronLabeledCubic22}}}
%%	\centerline{\input{multiplihedronFreehedronLabeledCubic22}}
%	\caption{Cubic realizations of the $2$-painted $2$-tree (left) and $2$-lighted $2$-shade (right) rotation lattices.}
%\end{figure}

%\begin{figure}[h]
%	\centerline{\includegraphics[scale=.9]{multiplihedronFreehedronLabeledCubic31}}}
%%	\centerline{\input{multiplihedronFreehedronLabeledCubic31}}
%	\caption{Cubic realizations of the $3$-painted $1$-tree (left) and $3$-lighted $1$-shade (right) rotation lattices.}
%\end{figure}

%\begin{figure}[h]
%	\centerline{\includegraphics[scale=.9]{multiplihedronFreehedronLabeledCubic40}}}
%%	\centerline{\input{multiplihedronFreehedronLabeledCubic40}}
%	\caption{Cubic realizations of the $4$-painted $0$-tree (left) and $4$-lighted $0$-shade (right) rotation lattices.}
%\end{figure}

%%%%%%%%%

\subsection{Cubic $(m,n)$-Hochschild polytope}
\label{subsec:cubicHochschildPolytope}

We now provide cubic realizations for the $(m,n)$-Hoch\-schild polytope.
Unfortunately, the formula for the cubic coordinates of an $m$-lighted $n$-shade~$\LS$ is not just obtained by counting non-inversions in~$\prec_{\LS}$ (\ie pairs~$i < j$ with~$i \prec_{\LS} j$).
We thus first introduce a bijection between the $m$-lighted $n$-shades and the $(m,n)$-Hochschild words, generalizing the triwords of~\cite{Saneblidze, RiveraSaneblidze, Chapoton-Dyck, Combe, Muhle}.
We then use these $(m,n)$-Hochschild words to obtain cubic realizations.

%%%

\subsubsection{$(m,n)$-Hochschild words}

We start with $(m,n)$-words, defined as follows.

\begin{definition}
\label{def:multiwords}
A \defn{$(m,n)$-word} is a word~$w \eqdef w_1 \dots w_n$ of length~$n$ on the alphabet $\{0,1, ... , m+1\}$ such that
\begin{itemize}
    \item $w_1 \neq m+1$
    \item for $s \in [1,m]$, $w_i = s$ implies $w_j \ge s$ for all $j<i$
\end{itemize}
We denote by~$\multiwords$ the poset of $(m,n)$-words ordered reverse componentwise (\ie $w \le w'$ if and only if~$w_i \ge w'_i$ for all~$i \in [n]$).
\end{definition}

\pagebreak
\begin{example}
When~$m = 0$, the second condition is empty, so that the $(0,n)$-words are binary words of length~$n$ starting with a~$0$, and $\multiwords[0][n]$ is isomorphic to the boolean lattice on~$n-1$ letters.
When~$m = 1$, the $(1,n)$-words are precisely the triwords of~\cite{Saneblidze, RiveraSaneblidze, Chapoton-Dyck, Combe, Muhle}, and $\multiwords[1][n]$ is isomorphic to the Hochschild lattice.
\end{example}

\begin{definition}
A \defn{$(m,n)$-Hochschild word}  is a pair of $(\sigma, w)$ where $\sigma$ is a permutation of~$[m]$ and $w$ is an $(m,n)$-word.
\end{definition}

We now define a bijection between the $m$-lighted $n$-shades and the $(m,n)$-Hochschild words.
Recall that we denote by~$ps(x)$ the preceeding sum of an entry~$x$ in an $m$-lighted $n$-shade (see \cref{def:preorderLightedShade}).

\begin{definition}
\label{def:lightedShadesToHochschildWords}
Consider a unary $m$-lighted $n$-shade $\LS \eqdef (S, C, \sigma)$ and denote by~$s_1, \dots, s_r$ the values of the singleton tuples of~$S$ read from top to bottom.
We associate to~$\LS$ an $(m,n)$-Hochschild word~$(\sigma, w)$, where the permutation is the permutation~$\sigma$ of the labels of the cuts of~$\LS$ read from bottom to top, and the $(m,n)$-word~$w$ has $p$th entry~$w_p$ given by
\begin{itemize}
\item if there is~$j \in [r]$ such that~$p = ps(s_j)-m-s_j+1$, then the number of cuts below~$s_j$,
\item $m+1$ otherwise.
\end{itemize}
In other words, for each~$s_j$, we write the number of cuts below~$s_j$ followed by~$s_j-1$ copies of~$m+1$.
See \cref{fig:HochschildWords24,fig:HochschildWords13} for some examples.
\afterpage{
\begin{figure}[t]
	\centerline{\includegraphics[scale=.9]{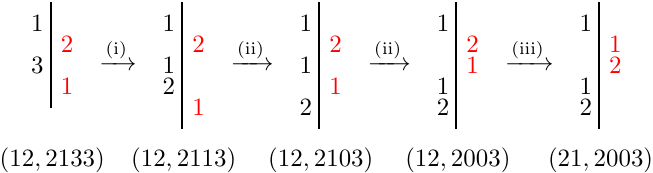}}
%	\centerline{
%	\begin{tabular}{c@{}c@{}c@{}c@{}c@{}c@{}c@{}c@{}c}
%		\lightedShade[1]{[1 [, tier=2 [3 [, tier=1 []]]]]}{1,2} &
%		\hspace{-.1cm}\raisebox{-1.2cm}{$\xrightarrow{\text{(i)}}$}\hspace{-.1cm} &
%		\lightedShade[1]{[1 [, tier=2 [1 [2 [, tier=1 []]]]]]}{1,2} &
%		\hspace{-.1cm}\raisebox{-1.2cm}{$\xrightarrow{\text{(ii)}}$}\hspace{-.1cm} &
%		\lightedShade[1]{[1 [, tier=2 [1 [, tier=1 [2 []]]]]]}{1,2} &
%		\hspace{-.1cm}\raisebox{-1.2cm}{$\xrightarrow{\text{(ii)}}$}\hspace{-.1cm} &
%		\lightedShade[1]{[1 [, tier=2 [, tier=1 [1 [2 []]]]]]}{1,2} &
%		\hspace{-.1cm}\raisebox{-1.2cm}{$\xrightarrow{\text{(iii)}}$}\hspace{-.1cm} &
%		\lightedShade[1]{[1 [, tier=1 [, tier=2 [1 [2 []]]]]]}{2,1} \\
%		$(12, 2133)$ &&
%		$(12, 2113)$ &&
%		$(12, 2103)$ &&
%		$(12, 2003)$ &&
%		$(21, 2003)$
%	\end{tabular}
%	}
	\caption{Some unary $2$-lighted $4$-shades and their $(2,4)$-Hochschild words.}
	\label{fig:HochschildWords24}
\end{figure}
}
\afterpage{
\begin{figure}
	\centerline{\includegraphics[scale=.9]{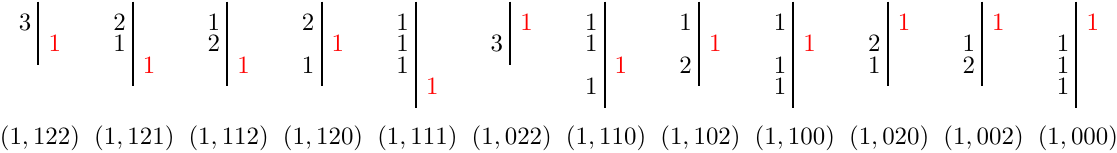}}
%	\centerline{
%	\begin{tabular}{c@{\hspace{-.2cm}}c@{\hspace{-.2cm}}c@{\hspace{-.2cm}}c@{\hspace{-.2cm}}c@{\hspace{-.2cm}}c@{\hspace{-.2cm}}c@{\hspace{-.2cm}}c@{\hspace{-.2cm}}c@{\hspace{-.2cm}}c@{\hspace{-.2cm}}c@{\hspace{-.2cm}}c}
%		\lightedShade[1]{[3 [, tier=1[]]]}{1} &
%		\lightedShade[1]{[2 [1 [, tier=1[]]]]}{1} &
%		\lightedShade[1]{[1 [2 [, tier=1[]]]]}{1} &
%		\lightedShade[1]{[2 [, tier=1[1 []]]]}{1} &
%		\lightedShade[1]{[1 [1 [1 [, tier=1[]]]]]}{1} &
%		\lightedShade[1]{[, tier=1[3 []]]}{1} &
%		\lightedShade[1]{[1 [1 [, tier=1[1 []]]]]}{1} &
%		\lightedShade[1]{[1 [, tier=1[2 []]]]}{1} &
%		\lightedShade[1]{[1 [, tier=1[1 [1 []]]]]}{1} &
%		\lightedShade[1]{[, tier=1[2 [1 []]]]}{1} &
%		\lightedShade[1]{[, tier=1[1 [2 []]]]}{1} &
%		\lightedShade[1]{[, tier=1[1 [1 [1 []]]]]}{1}
%		\\
%		$(1, 122)$ &
%		$(1, 121)$ &
%		$(1, 112)$ &
%		$(1, 120)$ &
%		$(1, 111)$ &
%		$(1, 022)$ &
%		$(1, 110)$ &
%		$(1, 102)$ &
%		$(1, 100)$ &
%		$(1, 020)$ &
%		$(1, 002)$ &
%		$(1, 000)$
%	\end{tabular}
%	}
	\caption{All unary $1$-lighted $3$-shades and $(1,3)$-Hochschild words.}
	\label{fig:HochschildWords13}
\end{figure}
}
\end{definition}

\begin{definition}
\label{def:HochschildWordsTolightedShades}
Conversely, we associate to an $(m,n)$-Hochschild word~$(\sigma, w)$ a unary $m$-lighted $n$-shade $\LS \eqdef (S, C, \sigma)$ where the labels of the cuts of~$\LS$ is given by the permutation~$\sigma$, and the $n$-shade~$S$ is the sequence of (either singleton or empty) tuples
\[
S \eqdef (s_{m,1}) \dots (s_{m,k_m})(\varnothing) \dots (\varnothing) (s_{i,1}) \dots (s_{i,k_i})(\varnothing) \cdots (\varnothing)(s_{0,1}) \dots (s_{0,k_0}),
\]
where the~$s_{i,j} \ge 1$ are such that
\[
w = m(m+1)^{s_{m,1}-1} \dots i(m+1)^{s_{1,k_i}-1} \dots i(m+1)^{s_{i,k_i}-1} \dots  0(m+1)^{s_{0,k_0}-1}.
\]
In other words, we place the $m$ cuts-to-be, and place a tuple~$(s)$ before the $(m-i+1)$st cut for each maximal subword of~$w$ of the form~$i(m+1)^{s-1}$.
See \cref{fig:HochschildWords24,fig:HochschildWords13} for some examples.
\end{definition}

%We leave to the reader the immediate verification of the following lemma.

\begin{lemma}
The maps of \cref{def:lightedShadesToHochschildWords,def:HochschildWordsTolightedShades} are inverse bijections between the unary $m$-lighted $n$-shades and the $(m,n)$-Hochschild words.
\end{lemma}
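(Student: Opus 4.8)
The plan is to verify that each map is well-defined and then that the two compositions are the identity, organizing everything around a single structural observation: a unary $m$-lighted $n$-shade is completely encoded by its cut permutation $\sigma$ together with, for each singleton read from top to bottom, the pair consisting of its value and the number of cuts lying below it. Since $\sigma$ is carried along verbatim by both maps, the real content concerns the $n$-shade part, and the whole statement reduces to checking that the $(m,n)$-word $w$ faithfully records this ordered list of pairs.

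First I would check that the forward map of \cref{def:lightedShadesToHochschildWords} lands among $(m,n)$-Hochschild words. Each singleton $s_j$ contributes one \emph{head letter} (its number of cuts below, a value in $\{0,\dots,m\}$) followed by $s_j-1$ copies of $m+1$, so $w$ has length $\sum_j s_j = n$, and its first letter is a head letter, whence $w_1 \ne m+1$. For the monotonicity condition I would invoke the basic property that, reading singletons from top to bottom, the number of cuts below is weakly decreasing (descending the shade moves cuts from below a singleton to above it). Thus the head letters of $w$ form a weakly decreasing subsequence while every $m+1$ strictly exceeds all head values; this is exactly the requirement that $w_i = s \le m$ forces $w_j \ge s$ for all $j < i$.

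Next I would check that the backward map of \cref{def:HochschildWordsTolightedShades} is well-defined. The defining conditions of an $(m,n)$-word guarantee a \emph{unique} factorization of $w$ into maximal blocks $i(m+1)^{s-1}$ with $i \le m$ and $s \ge 1$: the constraint $w_1 \ne m+1$ forces the word to begin with a head letter, and each maximal run of $m+1$'s attaches unambiguously to the head letter immediately preceding it (head letters, being $\le m$, are never $m+1$). Moreover the monotonicity condition forces the successive head letters to be weakly decreasing, so placing the singleton $(s)$ before the $(m-i+1)$st cut for each block produces singletons whose cuts-below counts decrease as one reads downward; this placement is therefore consistent and yields a genuine unary $m$-lighted $n$-shade with cut labels $\sigma$ (the case $n=0$, where $w$ is empty, being trivial).

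Finally I would establish that the two compositions are identities by unwinding the position bookkeeping. Since empty tuples contribute no entries, $ps(s_j) = m + (s_1 + \dots + s_j)$, so the starting index $ps(s_j) - m - s_j + 1 = s_1 + \dots + s_{j-1} + 1$ shows that the $j$th block of $w$ occupies exactly the positions following the first $j-1$ blocks; hence $w$ is literally the concatenation of the blocks $b_j(m+1)^{s_j-1}$, with $b_j$ the number of cuts below $s_j$. Consequently the unique block factorization of $w$ recovers precisely the ordered list of pairs (singleton value, cuts-below count), and together with $\sigma$ this reconstructs $S$ and $C$; conversely, encoding a shade and re-reading returns $w$. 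I expect the only delicate point to be matching the word-position formula to the block decomposition together with the verification that the $(m,n)$-word monotonicity condition corresponds exactly to the weakly-decreasing cuts-below property that makes the inverse placement of singletons into cut-separated regions consistent; once this correspondence is pinned down, bijectivity is a routine unwinding of \cref{def:lightedShadesToHochschildWords,def:HochschildWordsTolightedShades}.
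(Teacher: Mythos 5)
Your proposal is correct and takes essentially the same route as the paper's proof: the forward map is an $(m,n)$-word because the cuts-below counts of the singletons weakly decrease when reading the shade top to bottom (exactly the paper's argument for the monotonicity condition), the backward map yields a unary shade, and the compositions are identities by position bookkeeping. You merely spell out in more detail the block factorization and the inverse-composition check, which the paper compresses into ``it is immediate to check that the two maps are inverse to each other.''
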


\pagebreak
\begin{proof}
First, the word associated to a unary $m$-lighted $n$-shade is an $(m,n)$-word.
Indeed, 
\begin{itemize}
\item the first letter is not $m+1$, because there are only $m$ cuts, 
\item as we are reading the shade from top to bottom, the numbers written before the number $s \in [1,m]$ come from higher entries that have at least $s$ cuts below them, so these numbers are at least~$s$.
\end{itemize}

Conversely, the sequence of tuples associated to an $(m,n)$-Hochschild word is a unary $m$-lighted $n$-shade.
Indeed, the total sum is the length of~$w$, and each tuple is either empty or a singleton contained in a singleton cut.

Finally, it is immediate to check that the two maps are inverse to each other.
\end{proof}

\begin{remark}
\label{rem:rotationHochschildWords}
Through the bijection of \cref{def:lightedShadesToHochschildWords}, we can thus transport  the rotation lattice on unary $m$-lighted $n$-shades to a lattice on $(m,n)$-Hochschild words.
The relation between \mbox{$(m,n)$-Hochschild} words can be described as follows.
For two Hochschild words~$(\sigma,v)$ and~$(\tau,w)$, we have~$(\sigma,v) \leq (\tau,w)$ if and only if
\begin{itemize}
\item $\sigma \leq \tau$ in weak order,
\item $v \leq w$ in~$\multiwords$ (meaning reverse coordinatewise),
\item there exists a reduced expression $\sigma^{-1} \circ \tau = \tau_{i_1} \circ \ldots \circ \tau_{i_k}$ (\ie a path in the permutahedron from $\sigma$ to $\tau$) and a sequence of $(m,n)$-words $v = h_0 \leq h_1 \leq h_2 \ldots \leq h_k \leq h_{k+1} = w$ such that the word~$h_l$ does not have a letter~$i_l$.   
\end{itemize}
See \cref{fig:rotationHochschildWords}.
The lattice is thus a subposet in the Cartesian product between the weak order on permutations of~$[m]$ and the $(m,n)$-word poset~$\multiwords$.
It would be nice to have a more explicit formulation of the last condition in the description of the relation~$(\sigma,v) \leq (\tau,w)$, but we were not able to find~it.
\afterpage{
\begin{figure}[t]
	\centerline{\includegraphics[scale=1.8]{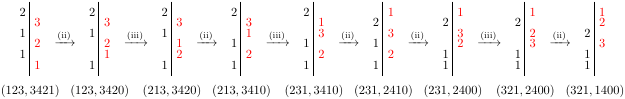}}
	\caption{Illustration of \cref{rem:rotationHochschildWords} on the transportation of the Hochschild lattice from unary $3$-lighted $4$-shades to the corresponding $(3,4)$-Hochschild words. Here, $(\sigma, v) = (123, 3421)$ and~$(\tau, w) = (321, 1400)$, the reduced expression is~$\sigma^{-1} \circ \tau = 321 = \tau_1 \circ \tau_2 \circ \tau_1$ and the sequence of $(3,4)$-words is~$v = h_0 = 3421 \le h_1 = 3420 \le h_2 = 3410 \le h_3 = 2400 \le h_4 = w = 1400$.}
	\label{fig:rotationHochschildWords}
\end{figure}
}
\end{remark}

Finally, as it is a fiber of the lattice morphism~$(\sigma, w) \mapsto \sigma$ from the $(m,n)$-Hochschild word rotation lattice to the weak order, we obtain that the $(m,n)$-word poset $\multiwords$ is a lattice.
As mentioned in \cref{rem:latticeProperties}, it seems to have much more interesting properties than the \mbox{$(m,n)$-Hochschild} word rotation lattice (for instance, it seems to be extremal, and its Coxeter polynomial seems to be a product of cyclotomic polynomials).

\begin{corollary}
The $(m,n)$-word poset $\multiwords$ is a lattice.
\end{corollary}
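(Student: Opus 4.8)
The plan is to realize $\multiwords$ as a fiber of the projection $\pi\colon (\sigma,w)\mapsto\sigma$ from the $(m,n)$-Hochschild word rotation lattice~$L$ onto the weak order on~$S_m$, and to exploit the fact that the \emph{bottom} fiber of an order-preserving map into a poset with a minimum is automatically closed under meets. The starting point is that $L$ is a lattice: it is the rotation lattice on unary $m$-lighted $n$-shades (\cref{prop:lightedShadeRotationLattice}), transported along the bijection of \cref{def:lightedShadesToHochschildWords}.

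First I would record two structural facts about $\pi$. On the one hand, $\pi$ is order preserving: among the three types of rotations of \cref{rem:rotationLightedShades}, only type~(iii) alters the cut labels, and it does so by a single cover of the weak order, while types~(i) and~(ii) fix~$\sigma$; hence every cover of $L$ is sent to a cover of the weak order or to an equality. On the other hand, for a fixed permutation~$\sigma$, the fiber $\pi^{-1}(\sigma)=\set{(\sigma,w)}{w \text{ an } (m,n)\text{-word}}$ is isomorphic to $\multiwords$ with its componentwise order: the cover relations internal to the fiber are exactly the rotations of types~(i) and~(ii) of \cref{rem:rotationLightedShades} (type~(iii) being the only one that changes~$\sigma$), and under the bijection of \cref{def:lightedShadesToHochschildWords} each such rotation alters a single coordinate of~$w$ monotonically, so that the induced fiber order is the componentwise order of \cref{def:multiwords}.

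Next, let $\hat{0}=\mathrm{id}$ be the minimum of the weak order and set $B\eqdef\pi^{-1}(\hat{0})$. For $x,y\in B$, the meet $x\meet y$ computed in the lattice~$L$ satisfies $\pi(x\meet y)\le\pi(x)=\hat{0}$, hence $\pi(x\meet y)=\hat{0}$, so $x\meet y\in B$. Thus $B$ is closed under the meet of~$L$, and therefore $B$ (with the order induced from~$L$, which is the componentwise order) is a finite meet-semilattice. Moreover $B\cong\multiwords$ has a greatest element, namely the componentwise-maximal word $w^{\max}=(m,m+1,\dots,m+1)$, which one checks directly to be an $(m,n)$-word. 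Since a finite meet-semilattice with a greatest element is a lattice---for any $x,y$ the set of their common upper bounds is nonempty, so its meet furnishes $x\join y$---we conclude that $\multiwords\cong B$ is a lattice.

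The one point that requires genuine care, and the reason the statement is not a triviality, is that $\multiwords$ is \emph{not} a sublattice of the full cube $\{0,\dots,m+1\}^n$ under the componentwise order: the componentwise minimum of two $(m,n)$-words need not satisfy the defining conditions of \cref{def:multiwords} (for instance, when $m=1$ the words $11$ and $02$ have componentwise minimum $01$, which is not a $(1,n)$-word). The meet in $\multiwords$ must therefore be produced indirectly, and the content of the argument is precisely that computing the meet inside the ambient lattice~$L$ and reading off its word lands back in the bottom fiber, so that this ambient meet \emph{is} the sought meet of $(m,n)$-words. The main step to verify carefully is thus the second structural fact above, namely that the fiber order is honestly the componentwise order and that every cover inside a fiber is of type~(i) or~(ii); everything else is formal.
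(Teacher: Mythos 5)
Your overall route is the same as the paper's: the paper disposes of this corollary in a single sentence, observing that $\multiwords$ is a fiber of the projection $(\sigma,w)\mapsto\sigma$ from the $(m,n)$-Hochschild word rotation lattice to the weak order. But your formal mechanism is different, and in one respect better. The paper asserts that this projection is a \emph{lattice} morphism (so that every fiber is closed under meets and joins), a property it never proves; you replace it by plain order preservation, which you do prove via the cover analysis (types (i) and (ii) of \cref{rem:rotationLightedShades} fix~$\sigma$, type (iii) covers in the weak order), and you recover meet-closure by specializing to the bottom fiber, finishing with the standard fact that a finite meet-semilattice with a greatest element is a lattice. This is more economical and does not lean on the paper's unproved lattice-morphism claim.

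There is, however, a gap at exactly the step you flag as the crux: the identification of the induced fiber order with the componentwise order of \cref{def:multiwords}. Your argument --- each cover inside the fiber changes a single coordinate of $w$ monotonically --- gives only one inclusion (fiber-comparability implies componentwise comparability); it does not show that any two componentwise-comparable $(m,n)$-words are joined by a chain of type (i)/(ii) rotations, which is what the conclusion $B\cong\multiwords$ requires. (The paper leaves this same point implicit, so you are no worse off, but since everything rests on it, it should be carried out.) It can be proved as follows: given $(m,n)$-words $v<w$ componentwise, pick a position $i$ with $v_i<w_i$. If $w_i=m+1$, a type (i) rotation replaces $w_i$ by the head value $w_j$ of its block, and the defining condition of \cref{def:multiwords} applied to $v$ gives $w_j\ge v_j\ge v_i$, so the new word is still componentwise $\ge v$. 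If every discrepancy has $w_i\le m$, take the last such position; comparing with the head of the next block and using the defining condition of $v$ again, one checks that the corresponding singleton has a cut immediately below it, so a type (ii) rotation lowers $w_i$ by one while staying $\ge v_i$; iterate. A second, harmless but genuine, slip: with the paper's conventions (right rotations split singletons and push them below cuts, and cubic coordinates decrease going up the lattice), the bijection of \cref{def:lightedShadesToHochschildWords} is order-\emph{reversing} on each fiber, so $B$ is \emph{anti}-isomorphic to $\multiwords$. In particular your $w^{\max}=(m,m+1,\dots,m+1)$ is the \emph{bottom} of $B$, and the top of $B$ is the word $0\cdots 0$; its maximality can in any case be seen directly, since every other fiber element admits a type (i) or (ii) up-rotation. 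Since both extremes exist and the opposite of a lattice is a lattice, your argument survives these corrections, but both repairs are needed for it to be complete.
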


\begin{remark} 
The lattice $\multiwords[1][n]$ has a geometric interpretation in the context of homotopical algebra.
Specifically, the Hochschild polytope $\HP[1][n]$ has a polytopal subdivision whose directed $1$-skeleton is $\multiwords[m][n]$.
The Hochschild polytopes $\HP[1][n]$ form an operadic bimodule over the operad of skew cubes $\HP[0][n]$ in the category of CW-spaces~\cite{Poliakova}, and tensor powers of this bimodule over the operad are CW-isomorphic to this subdivision.
Algebraically this allows for the composition of sequences of morphisms of $A_{\infty}$-modules over DG-algebras (or of representations up to homotopy~\cite{AbadCrainicDherin}).
\end{remark}

%%%

\subsubsection{Cubic realizations}

Passing from the unary $m$-lighted $n$-shades to the $(m,n)$-Hochschild words allows us to construct cubic realizations for the $m$-lighted $n$-shade coarsening lattice and rotation lattice.

\begin{definition}
The \defn{cubic vector} of an $(m,n)$-Hochschild word is the vector obtained by the concatenation of the Lehmer code of~$\sigma$ (forgetting the first coordinates which is always~$0$) with the $(m,n)$-word~$w$.
The \defn{cubic vector}~$\b{C}(\LS)$ of a unary $m$-lighted $n$-shade~$\LS$ is the cubic vector of the associated $(m,n)$-Hochschild word via the bijection of \cref{def:lightedShadesToHochschildWords}.
\end{definition}

The following statement is illustrated in \cref{fig:multiplihedronFreehedronLabeledCubic13,fig:multiplihedronFreehedronLabeledCubic3,fig:multiplihedronFreehedronLabeledCubic4}\,(top).

\begin{proposition}
The cubic vectors of $m$-lighted $n$-shades belong to the boundary of the cube~$[0,1] \times [0,2] \times \dots \times [0,m-1] \times [0,1] \times [0,2] \times \dots \times [0,n-1]$ and define
\begin{itemize}
\item a cubic realization of the right rotation lattice on $m$-lighted $n$-shades,
\item a cubic subdivision~$\set{\cube \big( \b{C}(\LS), \b{C}(\LS') \big)}{\text{$\LS \le \LS'$ defining a face of~$\HP$}}$ whose subcube poset is isomorphic to the face lattice of the $(m,n)$-Hochschild polytope~$\HP$.
\end{itemize}
\end{proposition}

\begin{proof}
We proceed by induction on~$n$, starting from the cubic realization of the permutahedron from \cref{prop:LehmerCode} for the case $\HP[m][0]$. 
Assume that we have constructed the cubic subdivision~$\c{D}_{m,n-1}$ of~$[0,1] \times [0,2] \times \dots \times [0,m-1] \times [0,1] \times [0,2] \times \dots \times [0,n-2]$ for $\HP[m][n-1]$.

Let~$\LS \eqdef (S,C,\mu)$ be an $m$-lighted $n$-shade.
Let~$p_{\LS}$ be the sum of the sizes~$|\mu_i|$ for~$c_i$ strictly below the last entry of~$S$.
Let~$q_{\LS}$ be $n-1$ if the last entry of~$S$ is not the singleton~$\{1\}$, and $q_{\LS}$ be the sum of the sizes~$|\mu_i|$ for~$c_i$ weakly below the last entry of~$S$ otherwise.
Let~$\LS'$ denote the $m$-lighted $(n-1)$-shade obtained by decrementing the last entry of the last entry of~$\LS$ (and removing it if it becomes empty), and let~$C_{\LS'}$ denote the cube of~$\c{D}_{m,n-1}$ corresponding to~$\LS'$.
Finally, define~$C_{\LS} \eqdef C_{\LS'} \times [p_{\LS}, q_{\LS}]$.

We claim that the set~$\c{D}_{m,n} \eqdef \set{C_{\LS}}{\LS \text{ $m$-lighted $n$-shade}}$ defines a cubic subdivision of the cube~$[0,1] \times [0,2] \times \dots \times [0,m-1] \times [0,1] \times [0,2] \times \dots \times [0,n-1]$.
Indeed, we just need to prove by induction that coarsening in $m$-lighted $n$-shades corresponds to the inclusion of the corresponding cubes.
Let $\LS$ and~$\LS[T]$ be two $m$-lighted $n$-trees, and let $\LS'$ and~$\LS[T]'$ be the $m$-lighted $(n-1)$-shades obtained by decrementing their last entries.
If~$\LS$ coarsens~$\LS[T]$, then~$\LS'$ (weakly) coarsens~$\LS[T]'$ so that~$C_{\LS'} \supseteq C_{\LS[T]'}$ by induction, and~$[p_{\LS}, q_{\LS}] \supseteq [p_{\LS[T]}, q_{\LS[T]}]$, so that~$C_{\LS} \supseteq C_{\LS[T]}$.

Finally, we observe by induction that for a unary $m$-lighted $n$-shade~$\LS$, the $0$-dimensional cube~$C_{\LS}$ is at coordinate given by the cubic vector $\b{C}(\LS)$.
\end{proof}

%%%%%%%%%%%%%%%%%%%%%%%%%%%%%%%%%%%%%%

\section*{Acknowledgements}

We thank Frédéric Chapoton for suggesting to look for polytopal realizations of the Hochschild lattices.
This work started at the workshop ``Combinatorics and Geometry of Convex Polyhedra'' held at the Simons Center for Geometry and Physics in March 2023.
We are grateful to the organizers (Karim Adiprasito, Alexey Glazyrin, Isabella Novik, and Igor Pak) for this inspiring event, and to all participants for the wonderful atmosphere.
Finally, we are indebted to an anonymous referee, whose thorough and constructive reports largely improved the final quality and presentation of the paper.

%%%%%%%%%%%%%%%%%%%%%%%%%%%%%%%%%%%%%%

\bibliographystyle{alpha}
\bibliography{freehedron}
\label{sec:biblio}

%%%%%%%%%%%%%%%%%%%%%%%%%%%%%%%%%%%%%%

\clearpage
\appendix

\section{Enumeration tables}
\label{sec:tables}

\enlargethispage{.5cm}
All references like~\OEIS{A000142} are entries of the Online Encyclopedia of Integer Sequences~\cite{OEIS}. \\ [-.3cm]

%%%%%%%%%%

\subsection{Multiplihedra} ~
\label{subsec:tablesMultiplihedra}

\begin{table}[h]
	\centerline{\begin{tabular}{r|rrrrrrrrrr|l}
		$m \backslash n$ & 0 & 1 & 2 & 3 & 4 & 5 & 6 & 7 & 8 & 9 & \\
		\hline
		0 & . & 1 & 2 & 5 & 14 & 42 & 132 & 429 & 1430 & 4862 & \OEIS{A000108} \\
		1 & 1 & 2 & 6 & 21 & 80 & 322 & 1348 & 5814 & 25674 & & \OEIS{A121988} \\
		2 & 2 & 6 & 24 & 108 & 520 & 2620 & 13648 & 72956 & & & $2 \cdot \OEIS{A158826}$ \\
		3 & 6 & 24 & 120 & 660 & 3840 & 23220 & 144504 & & & & ? \\
		4 & 24 & 120 & 720 & 4680 & 31920 & 225120 & & & & \\
		5 & 120 & 720 & 5040 & 37800 & 295680 & & & & & \\
		6 & 720 & 5040 & 40320 & 342720 & & & & & & \\
		7 & 5040 & 40320 & 362880 & & & & & & & \\
		8 & 40320 & 362880 & & & & & & & & \\
		9 & 362880 & & & & & & & & & \\
		\hline
		& \OEIS{A000142} & \OEIS{A000142} & \OEIS{A000142} & \OEIS{A084253} & ? & & & & & & $m! \cdot \OEIS{A158825}$
	\end{tabular}}
	\caption{Number of vertices of the multiplihedra~$\Multiplihedron$. See~$\OEIS{A158825}$.}
	\label{table:verticesMultiplihedra}
\end{table}

\begin{table}[h]
	\centerline{\begin{tabular}{r|rrrrrrrrrr|l}
		$m \backslash n$ & 0 & 1 & 2 & 3 & 4 & 5 & 6 & 7 & 8 & 9 & \\
		\hline
		0 & . & 1 & 2 & 5 & 9 & 14 & 20 & 27 & 35 & 44 & \OEIS{A000096} \\
		1 & 1 & 2 & 6 & 13 & 25 & 46 & 84 & 155 & 291 & & \OEIS{A335439} \\
		2 & 2 & 6 & 14 & 29 & 57 & 110 & 212 & 411 & & & ? \\
		3 & 6 & 14 & 30 & 61 & 121 & 238 & 468 & & & & \\
		4 & 14 & 30 & 62 & 125 & 249 & 494 & & & & & \\
		5 & 30 & 62 & 126 & 253 & 505 & & & & & & \\
		6 & 62 & 126 & 254 & 509 & & & & & & & \\
		7 & 126 & 254 & 510 & & & & & & & & \\
		8 & 254 & 510 & & & & & & & & & \\
		9 & 510 & & & & & & & & & & \\
		\hline
		& \OEIS{A000918} & \OEIS{A000918} & \OEIS{A000918} & \OEIS{A036563} & \OEIS{A048490} & ? & & & & &
	\end{tabular}}
	\caption{Number of facets of the multiplihedra~$\Multiplihedron$.}
	\label{table:facetsMultiplihedra}
\end{table}

\begin{table}[h]
	\centerline{\begin{tabular}{r|rrrrrrrrrr|l}
		$m \backslash n$ & 0 & 1 & 2 & 3 & 4 & 5 & 6 & 7 & 8 & 9 & \\
		\hline
		0 & . & 1 & 3 & 11 & 45 & 197 & 903 & 4279 & 20793 & 103049 & \OEIS{A001003} \\
		1 & 1 & 3 & 13 & 67 & 381 & 2311 & 14681 & 96583 & 653049 & & ? \\
		2 & 3 & 13 & 75 & 497 & 3583 & 27393 & 218871 & 1810373 & & & \\
		3 & 13 & 75 & 541 & 4375 & 38073 & 349423 & 3341753 & & & & \\
		4 & 75 & 541 & 4683 & 44681 & 454855 & 4859697 & & & & & \\
		5 & 541 & 4683 & 47293 & 519847 & 6055401 & & & & & & \\
		6 & 4683 & 47293 & 545835 & 6790697 & & & & & & & \\
		7 & 47293 & 545835 & 7087261 & & & & & & & & \\
		8 & 545835 & 7087261 & & & & & & & & & \\
		9 & 7087261 & & & & & & & & & & \\
		\hline
		& \OEIS{A000670} & \OEIS{A000670} & \OEIS{A000670} & ? & & & & & & &
	\end{tabular}}
	\caption{Total number of faces of the multiplihedra~$\Multiplihedron$. The empty face is not counted, but the polytope itself is.}
	\label{table:facesMultiplihedra}
\end{table}

%%%%%%%%%%

\clearpage
\subsection{Hochschild polytopes} ~
\label{subsec:tablesHochschildPolytope}

\begin{table}[h]
	\centerline{\begin{tabular}{r|rrrrrrrrrr|l}
		$m \backslash n$ & 0 & 1 & 2 & 3 & 4 & 5 & 6 & 7 & 8 & 9 & \\
		\hline
		0 & . & 1 & 2 & 4 & 8 & 16 & 32 & 64 & 128 & 256 & \OEIS{A000079} \\
		1 & 2 & 2 & 5 & 12 & 28 & 64 & 144 & 320 & 704 & & \OEIS{A045623} \\
		2 & 6 & 6 & 18 & 50 & 132 & 336 & 832 & 2016 & & & ? \\
		3 & 24 & 24 & 84 & 264 & 774 & 2160 & 5808 & & & & \\
		4 & 120 & 120 & 480 & 1680 & 5400 & 16344 & & & & & \\
		5 & 720 & 720 & 3240 & 12480 & 43560 & & & & & & \\
		6 & 5040 & 5040 & 25200 & 105840 & & & & & & & \\
		7 & 40320 & 40320 & 221760 & & & & & & & & \\
		8 & 362880 & 362880 & & & & & & & & & \\
		9 & 3628800 & & & & & & & & & & \\
		\hline
		& \OEIS{A000142} & \OEIS{A000142} & \OEIS{A038720} & ? & & & & & & 
	\end{tabular}}
	\caption{Number of vertices of the Hochschild polytope~$\HP$. See~$\OEIS{A158825}$.}
	\label{table:verticesHochschildPolytope}
\end{table}

\begin{table}[h]
	\centerline{\begin{tabular}{r|rrrrrrrrrr|l}
		$m \backslash n$ & 0 & 1 & 2 & 3 & 4 & 5 & 6 & 7 & 8 & 9 & \\
		\hline
		0 & . & 0 & 2 & 4 & 6 & 8 & 10 & 12 & 14 & 16 & \OEIS{A005843} \\
		1 & 0 & 2 & 5 & 8 & 11 & 14 & 17 & 20 & 23 & & \OEIS{A016789} \\
		2 & 2 & 6 & 11 & 16 & 21 & 26 & 31 & 36 & & & \OEIS{A016861} \\
		3 & 6 & 14 & 23 & 32 & 41 & 50 & 59 & & & & \OEIS{A017221} \\
		4 & 14 & 30 & 47 & 64 & 81 & 98 & & & & & ? \\
		5 & 30 & 62 & 95 & 128 & 161 & & & & & & \\
		6 & 62 & 126 & 191 & 256 & & & & & & & \\
		7 & 126 & 254 & 383 & & & & & & & & \\
		8 & 254 & 510 & & & & & & & & & \\
		9 & 510 & & & & & & & & & & \\
		\hline
		& \OEIS{A000918} & \OEIS{A000918} & \OEIS{A055010} & \OEIS{A000079} & \OEIS{A083575} & \OEIS{A164094} & \OEIS{A164285} & \OEIS{A140504} & ? &
	\end{tabular}}
	\caption{Number of facets of the Hochschild polytope~$\HP$.}
	\label{table:facetsHochschildPolytope}
\end{table}

\begin{table}[h]
	\centerline{\begin{tabular}{r|rrrrrrrrrr|l}
		$m \backslash n$ & 0 & 1 & 2 & 3 & 4 & 5 & 6 & 7 & 8 & 9 & \\
		\hline
		0 & . & 1 & 3 & 9 & 27 & 81 & 243 & 729 & 2187 & 6561 & \OEIS{A000244} \\
		1 & 1 & 3 & 11 & 39 & 135 & 459 & 1539 & 5103 & 16767 & & ? \\
		2 & 3 & 13 & 57 & 233 & 909 & 3429 & 12609 & 45441 & & & \\
		3 & 13 & 75 & 383 & 1767 & 7635 & 31491 & 125415 & & & & \\
		4 & 75 & 541 & 3153 & 16169 & 76437 & 341205 & & & & & \\
		5 & 541 & 4683 & 30671 & 172839 & 885795 & & & & & & \\
		6 & 4683 & 47293 & 343857 & 2110313 & & & & & & & \\
		7 & 47293 & 545835 & 4362383 & & & & & & & & \\
		8 & 545835 & 7087261 & & & & & & & & & \\
		9 & 7087261 & & & & & & & & & \\
		\hline
		& \OEIS{A000670} & \OEIS{A000670} & ? & & & & & & & &
	\end{tabular}}
	\caption{Total number of faces of the Hochschild polytope~$\HP$. The empty face is not counted, but the polytope itself is.}
	\label{table:facesHochschildPolytope}
\end{table}

%%%%%%%%%%

\clearpage
\subsection{Singletons} ~
\label{subsec:tableSingletons}

\begin{table}[h]
	\centerline{\begin{tabular}{r|rrrrrrrrrr|l}
		$m \backslash n$ & 0 & 1 & 2 & 3 & 4 & 5 & 6 & 7 & 8 & 9 & \\
		\hline
		0 &. & 1 & 2 & 3 & 5 & 8 & 13 & 21 & 34 & 55 & \OEIS{A000045} \\
		1 & 1 & 2 & 4 & 7 & 12 & 20 & 33 & 54 & 88 & & \OEIS{A000071} \\
		2 & 2 & 6 & 14 & 28 & 52 & 92 & 158 & 266 & & & ? \\
		3 & 6 & 24 & 66 & 150 & 306 & 582 & 1056 & & & & \\
		4 & 24 & 120 & 384 & 984 & 2208 & 4536 & & & & & \\
		5 & 120 & 720 & 2640 & 7560 & 18600 & & & & & & \\
		6 & 720 & 5040 & 20880 & 66240 & & & & & & & \\
		7 & 5040 & 40320 & 186480 & & & & & & & & \\
		8 & 40320 & 362880 & & & & & & & & & \\
		9 & 362880 & & & & & & & & & \\
		\hline
		& \OEIS{A000142} & \OEIS{A000142} & ? & & & & & & & &
	\end{tabular}}
	\caption{Number of shadow singletons, \ie common vertices of the $(m,n)$-multiplihedron~$\Multiplihedron$ and the $(m,n)$-Hochschild polytope~$\HP$.}
	\label{table:singletons}
\end{table}

\end{document}